\documentclass[12pt,reqno]{amsart}

\usepackage[T1]{fontenc}
\usepackage[utf8]{inputenc}
\usepackage{amsmath,amssymb,amsthm,mathrsfs, amsfonts}
\usepackage[hmargin=1in,vmargin=0.7in]{geometry}
\usepackage{xcolor}
\usepackage{booktabs}
\usepackage{colortbl}
\usepackage{enumitem}
\usepackage{mathtools}
\mathtoolsset{showonlyrefs}
\definecolor{tabhead}{RGB}{222, 231, 243}
\definecolor{tabrow}{RGB}{244, 247, 251}
\definecolor{lava}{rgb}{0.81, 0.06, 0.13}
\definecolor{cobalt}{rgb}{0.0, 0.28, 0.67}

\usepackage[colorlinks=true,linkcolor=lava,citecolor=blue,urlcolor=cobalt,hypertexnames=false]{hyperref}

\theoremstyle{plain}
\newtheorem{theorem}{Theorem}[section]
\newtheorem{proposition}{Proposition}[section]
\newtheorem{lemma}{Lemma}[section]

\theoremstyle{definition}
\newtheorem{definition}{Definition}[section]
\newtheorem{remark}{Remark}[section]
\newtheorem{question}{Question}[section]
\newtheorem{assumption}{Assumption}[section]
\numberwithin{equation}{section}

\newcommand{\R}{\mathbb{R}}
\newcommand{\N}{\mathbb{N}}
\newcommand{\C}{\mathbb{C}}
\newcommand{\Lc}{\mathcal{L}_a}
\newcommand{\Ha}{H^1_a}
\newcommand{\dHa}{\dot H^1_a}
\newcommand{\Ea}{E_a}
\newcommand{\Qa}{\mathcal{Q}_a}
\newcommand{\Sw}{S_\omega}
\newcommand{\GN}{\mathcal{C}_a}
\newcommand{\GNs}{\mathcal{C}_{\sharp}}
\newcommand{\AH}{\mathcal{A}_{\mathrm{H}}}
\newcommand{\aH}{a_{\mathrm{H}}}
\newcommand{\Mass}{\mathcal{M}}
\newcommand{\eps}{\varepsilon}
\newcommand{\dd}{\,dx}
\DeclareMathOperator{\Imm}{Im}
\DeclareMathOperator{\Ree}{Re}

\begin{document}

\title[Normalized ground states and a mass-constrained threshold]
{Normalized ground states and a mass-constrained scattering threshold
for the inhomogeneous NLS with an inverse-square potential}

\author[M. Majdoub]{Mohamed Majdoub}
\address[M. Majdoub]{Department of Mathematics, College of Science,
Imam Abdulrahman Bin Faisal University, P.O.~Box 1982, Dammam 31441, Saudi Arabia}
\address[M. Majdoub]{Basic and Applied Scientific Research Center,
Imam Abdulrahman Bin Faisal University, P.O.~Box 1982, Dammam 31441, Saudi Arabia}
\email{mmajdoub@iau.edu.sa}
\urladdr{https://orcid.org/0000-0001-6038-1069}

\author[T. Saanouni]{Tarek Saanouni}
\address[T. Saanouni]{Department of Mathematics, College of Science,
Qassim University, Buraydah 51452, Saudi Arabia}
\email{t.saanouni@qu.edu.sa}
\urladdr{https://orcid.org/0000-0003-1433-3650}

\subjclass[2020]{Primary 35Q55; Secondary 35P25, 35B44, 35B35, 35J61, 35A15, 35A23}
\keywords{Inhomogeneous nonlinear Schr\"odinger equation; inverse-square potential;
normalized solutions; ground states; sharp Gagliardo--Nirenberg inequality;
scattering; finite-time blow-up; strong instability; Hardy inequality}

\begin{abstract}
We study the focusing inhomogeneous nonlinear Schr\"odinger equation with an
inverse-square potential,
\[
i\partial_t u+\Delta u-a|x|^{-2}u+|x|^{-b}|u|^{\alpha}u=0,
\qquad (t,x)\in\R\times\R^{N},
\]
in the mass-supercritical and energy-subcritical range
$\frac{4-2b}{N}<\alpha<\frac{4-2b}{N-2}$. Two lines of work on this model have
developed separately: the scattering theory below the ground state, and the
variational theory of solutions with prescribed mass. We connect them. Our
starting point is the minimax level $m_a(c)$ of the energy on the Pohozaev
manifold of mass $c$, introduced by Hajaiej, Luo and Wang. We compute it
explicitly, $m_a(c)=K_{\alpha,N,b}\,\GN^{-\frac{4}{N\alpha+2b-4}}c^{-\gamma}$,
where $\GN$ is the sharp constant in the Gagliardo--Nirenberg inequality adapted
to $-\Delta+a|x|^{-2}$ and $\gamma=\frac{2(4-2b-(N-2)\alpha)}{N\alpha+2b-4}$. We
show that the minimizers of $m_a(c)$ are exactly the optimizers of this
inequality that lie on the Pohozaev manifold, and that they are ground states of
the stationary problem. This answers a question left open by Hajaiej, Luo and
Wang (J.~Dynam.\ Differential Equations \textbf{36} (2024), 3457--3480). We then
show that $m_a(c)$ is a sharp threshold for the dynamics. Let $u$ be a solution of
mass $c$ with energy below $m_a(c)$. Its Pohozaev functional cannot vanish. If
this functional is negative and the datum has finite variance, then $u$ blows up
in finite time. If it is positive, then $u$ is global. For the three-dimensional
cubic model with $0<b<1$ and $a>-\frac14$, the solution also scatters; here we use
the scattering criterion of Hajaiej, Luo and Wang in its scale-invariant form. As
consequences, we obtain the strong instability of all normalized ground states, a
monotonicity law for the threshold with respect to the coupling constant $a$, and
a uniform positive lower bound for $m_a(c)$ as $a$ approaches the critical Hardy
value.
\end{abstract}

\maketitle

%%%%%%%%%%%%%%%%%%%%%%%%%%%%%%%%%%%%%%%%%%%%%%%%%%%%%%%%%%%%%%%%%%%%%%%%%%%%%%
\section{Introduction}\label{sec:intro}
%%%%%%%%%%%%%%%%%%%%%%%%%%%%%%%%%%%%%%%%%%%%%%%%%%%%%%%%%%%%%%%%%%%%%%%%%%%%%%

This paper deals with the Cauchy problem for the focusing inhomogeneous nonlinear
Schr\"odinger equation with an inverse-square potential,
\begin{equation}\label{eq:main}
\begin{cases}
i\partial_t u-\Lc u+|x|^{-b}|u|^{\alpha}u=0,&(t,x)\in\R\times\R^{N},\\
u(0,\cdot)=u_0,
\end{cases}\tag{INLS}
\end{equation}
where $N\ge3$, $u:\R\times\R^{N}\to\C$, $0<b<2$, $\alpha>0$, and
\begin{equation}\label{eq:La}
\Lc:=-\Delta+\frac{a}{|x|^{2}},\qquad a>\aH:=-\frac{(N-2)^{2}}{4}.
\end{equation}
The lower bound on $a$ comes from the sharp Hardy inequality (see, for instance,
\cite{HLP1952,VazquezZuazua2000})
\begin{equation}\label{eq:hardy}
\frac{(N-2)^{2}}{4}\int_{\R^{N}}\frac{|u|^{2}}{|x|^{2}}\dd
\le\int_{\R^{N}}|\nabla u|^{2}\dd,\qquad u\in H^{1}(\R^{N}).
\end{equation}
For $a>\aH$, this inequality implies that the quadratic form
\begin{equation}\label{eq:dHa}
\|u\|_{\dHa}^{2}:=\big\|\Lc^{1/2}u\big\|_{L^{2}}^{2}
=\int_{\R^{N}}\Big(|\nabla u|^{2}+a\frac{|u|^{2}}{|x|^{2}}\Big)\dd
\end{equation}
is comparable to $\|\nabla u\|_{L^2}^2$. We take for $\Lc$ the Friedrichs
extension of the operator defined on $C_c^{\infty}(\R^{N}\setminus\{0\})$. This
choice matters for $\aH<a<\aH+1$, where this operator has several self-adjoint
extensions \cite{KSWW1975}. We write
$\Ha(\R^{N})$ for $H^{1}(\R^{N})$ endowed with the norm
$\|u\|_{\Ha}^{2}=\|u\|_{L^{2}}^{2}+\|u\|_{\dHa}^{2}$.

Both perturbations of the Laplacian in \eqref{eq:main} are physically natural,
and each of them alone has been studied in depth. The potential $a|x|^{-2}$ has
the same homogeneity as $-\Delta$. Hence it is invariant under the natural scaling
of the equation. It cannot be regarded as a short-range perturbation, and it does
not vanish in any asymptotic regime generated by the scaling flow. Operators of
this type occur in the quantum mechanics of a charged particle near a polarizable
molecule, in the linearization of critical elliptic problems, in combustion
models, and in wave propagation on conical geometries. We refer to Kalf,
Schmincke, Walter and W\"ust \cite{KSWW1975}, V\'azquez and Zuazua
\cite{VazquezZuazua2000}, Burq, Planchon, Stalker and Tahvildar-Zadeh
\cite{BPST2003} and Killip, Miao, Visan, Zhang and Zheng \cite{KMVZZ2018}. The
weight $|x|^{-b}$ describes a medium whose nonlinear response varies in space. In
nonlinear optics, it models the propagation of a laser beam in a plasma channel
with nonuniform density. It also appears in the study of Bose--Einstein
condensates with a spatially modulated scattering length; see Gill
\cite{Gill2000}, Liu and Tripathi \cite{LiuTripathi1994} and Belmonte-Beitia,
P\'erez-Garc\'ia, Vekslerchik and Torres \cite{BPVT2007}. In \eqref{eq:main}, the
two singularities sit at the same point. The potential acts on the linear flow at
the scale-invariant rate, whereas the weight changes the strength of the
nonlinearity near the origin. Much of the interest of the model lies in the
interaction of these two effects.

Equation \eqref{eq:main} is invariant under the scaling
\begin{equation}\label{eq:scaling}
u_\mu(t,x):=\mu^{\frac{2-b}{\alpha}}u(\mu^{2}t,\mu x),\qquad\mu>0.
\end{equation}
This scaling leaves the homogeneous Sobolev space $\dot H^{s_c}(\R^N)$ invariant,
with
\begin{equation}\label{eq:sc}
s_c:=\frac N2-\frac{2-b}{\alpha}.
\end{equation}
Solutions of \eqref{eq:main} conserve the mass and the energy,
\begin{equation}\label{eq:conserved}
\Mass(u):=\int_{\R^{N}}|u|^{2}\dd,\qquad
\Ea(u):=\frac12\|u\|_{\dHa}^{2}-\frac{1}{\alpha+2}\int_{\R^{N}}\frac{|u|^{\alpha+2}}{|x|^{b}}\dd .
\end{equation}
Throughout the paper, we work in the mass-supercritical and energy-subcritical regime
\begin{equation}\label{eq:range}
\frac{4-2b}{N}<\alpha<\frac{4-2b}{N-2},
\qquad\text{equivalently}\qquad 0<s_c<1.
\end{equation}
In this range, the energy is unbounded from below on every sphere of $L^{2}$. The
long-time behaviour of solutions then results from a competition between
dispersion and concentration.

\subsection{Background}\label{ssec:background}

\emph{The classical equation.} When $a=0$ and $b=0$, \eqref{eq:main} is the usual
focusing NLS, and its threshold theory in the range \eqref{eq:range} is well
understood. Weinstein \cite{Weinstein1983} identified the sharp constant in the
Gagliardo--Nirenberg inequality in terms of the ground state $Q$, which led to the
first sharp criteria for global existence. Cazenave and Lions
\cite{CazenaveLions1982} proved orbital stability in the mass-subcritical case.
Berestycki and Cazenave \cite{BerestyckiCazenave1981} showed that standing
waves are strongly unstable in the supercritical one, using the virial identity
of Glassey \cite{Glassey1977}; see also Le Coz \cite{LeCoz2008}. Holmer and
Roudenko \cite{HolmerRoudenko2008} then proved that, for the three-dimensional
cubic equation, radial data satisfying
\begin{equation}\label{eq:HR}
\Mass(u_0)\Ea(u_0)<\Mass(Q)\Ea(Q),\qquad
\|u_0\|_{L^{2}}\|\nabla u_0\|_{L^{2}}<\|Q\|_{L^{2}}\|\nabla Q\|_{L^{2}},
\end{equation}
lead to global solutions that scatter. If the second inequality is reversed, then
finite-variance or radial data lead to blow-up \cite{HolmerRoudenko2007}. The
proof of scattering followed the concentration-compactness and rigidity scheme of
Kenig and Merle \cite{KenigMerle2006}. The radial assumption was removed by
Duyckaerts, Holmer and Roudenko \cite{DHR2008}. The whole intercritical range was
treated by Akahori and Nawa \cite{AkahoriNawa2013}, Guevara \cite{Guevara2014} and
Fang, Xie and Cazenave \cite{FXC2011}. Duyckaerts and Roudenko
\cite{DuyckaertsRoudenko2010} classified the solutions lying exactly on the
threshold (see also Campos, Farah and Roudenko \cite{CFR2022}). More recently,
Dodson and Murphy \cite{DodsonMurphy2017,DodsonMurphy2018} found a proof of
scattering that avoids concentration-compactness. It combines an interaction
Morawetz or virial estimate in the spirit of \cite{CKSTT2004} with a scattering
criterion. The same circle of ideas was used for NLS with exponential
nonlinearities in two dimensions \cite{KeraaniMajdoub2020,BDM2019}.

\emph{The inhomogeneous equation.} For $a=0$ and $b>0$, the local theory in
$H^{1}$ goes back to Genoud and Stuart \cite{GenoudStuart2008} and Guzm\'an
\cite{Guzman2017}, with later refinements by Dinh \cite{Dinh2018NA} and by Aloui
and Tayachi \cite{AlouiTayachi2021}. Farah \cite{Farah2016} proved the sharp
Gagliardo--Nirenberg inequality with weight $|x|^{-b}$ and the corresponding
global existence and blow-up results. The mass-critical case is due to Genoud
\cite{Genoud2012}. Scattering below the ground state was first obtained by Farah
and Guzm\'an for radial data, in three dimensions \cite{FarahGuzman2017} and in
higher dimensions \cite{FarahGuzman2020}, and by Campos \cite{Campos2021} for
radial data in an extended range. The radial assumption was removed by Miao,
Murphy and Zheng \cite{MiaoMurphyZheng2021}, by Campos and Cardoso
\cite{CamposCardoso2022} and by Cardoso, Farah, Guzm\'an and Murphy
\cite{CFGM2022}. Murphy \cite{Murphy2022} gave a short proof which exploits the
decay of the weight at infinity. The defocusing problem was treated by Dinh
\cite{Dinh2019JEE}, and the energy-critical case by Guzm\'an and Murphy
\cite{GuzmanMurphy2021}. Related threshold results for weights that grow at
infinity, and for competing inhomogeneous nonlinearities where scaling is lost,
were obtained in \cite{DMS2023,GMS2026}.

\emph{The inverse-square potential.} For $a\ne0$ and $b=0$, the Strichartz
estimates are due to Burq, Planchon, Stalker and Tahvildar-Zadeh \cite{BPST2003},
and the endpoint case was obtained by Bouclet and Mizutani
\cite{BoucletMizutani2018}. Fanelli, Felli, Fontelos and Primo \cite{FFFP2013}
obtained dispersive estimates in three dimensions. Killip, Miao, Visan, Zhang and
Zheng \cite{KMVZZ2018} compared the Sobolev spaces defined through $\Lc$ with the
classical ones, a tool that has become indispensable. Zhang and Zheng
\cite{ZhangZheng2014} proved scattering in the defocusing case. Killip, Murphy,
Visan and Zheng \cite{KMVZ2017} obtained the focusing analogue of \cite{DHR2008}
for the cubic equation in three dimensions, under the conditions
\begin{equation}\label{eq:KMVZ}
\Mass(u_0)\Ea(u_0)<\Mass(Q_{a\wedge0})E_{a\wedge0}(Q_{a\wedge0}),\qquad
\|u_0\|_{L^{2}}\|u_0\|_{\dHa}<\|Q_{a\wedge0}\|_{L^{2}}\|Q_{a\wedge0}\|_{\dot H^1_{a\wedge0}},
\end{equation}
where $a\wedge0=\min\{a,0\}$. The appearance of $a\wedge0$ reflects a genuine
phenomenon. For $b=0$ and $a>0$, the sharp Gagliardo--Nirenberg constant coincides
with the one for $a=0$ and is not attained, since optimizing sequences escape to
spatial infinity, where the potential is negligible. In other words, for $b=0$ and
$a\ge0$, the threshold is given by the ground state of the equation without
potential. This phenomenon disappears when $b>0$. The decay of the weight restores
compactness, $\GN$ is attained for every $a>\aH$ (\cite[Lemma~3.2]{CamposGuzman2021},
Proposition \ref{prop:GN} and Remark \ref{rmk:attained}), and the threshold is
carried by a ground state of the equation with the potential.

Further developments include the intercritical range \cite{LuMiaoMurphy2018},
radial data in higher dimensions \cite{Zheng2018}, blow-up criteria
\cite{Dinh2021}, the instability of standing waves \cite{Dinh2021CVEE}, the
critical coupling $a=\aH$ \cite{Suzuki2016}, and the uniqueness of ground states
\cite{MNN2021}. A different scale-critical perturbation, namely the
Aharonov--Bohm magnetic potential, was combined with an inhomogeneous
nonlinearity in \cite{MajdoubSaanouni2024}.

\emph{Both perturbations together.} The Cauchy problem for \eqref{eq:main} was
first studied by Suzuki \cite{Suzuki2016}, by the energy method of Okazawa, Suzuki
and Yokota \cite{Suzuki2012,Suzuki2012-bis}. As recalled in
\cite{CamposGuzman2021}, problem \eqref{eq:main} is locally well posed in
$\Ha(\R^N)$ for $N\ge3$, $0<b<2$, $0<\alpha<\frac{4-2b}{N-2}$ and $a>\aH$, with a
solution $u\in C\big([0,T);H^1(\R^N)\big)\cap C^1\big([0,T);H^{-1}(\R^N)\big)$.
Every local solution extends globally in time in the defocusing case, and also in
the focusing case when $\alpha<\frac{4-2b}{N}$. The critical coupling $a=\aH$ is
treated in \cite{Suzuki2016}. The energy method does not show whether these
solutions belong to $L^q_{loc}\big(H_a^{1,r}\big)$ for every admissible pair
$(q,r)$, a property that is needed for scattering. Campos and Guzm\'an
\cite{CamposGuzman2021} proved it by Kato's method, which combines the contraction
mapping principle with Strichartz estimates, under additional restrictions on the
parameters.

Campos and Guzm\'an \cite{CamposGuzman2021} also developed the variational theory
of \eqref{eq:main}. They established the Gagliardo--Nirenberg inequality adapted
to $\Lc$ and to the weight,
\begin{equation}\label{eq:GNintro}
\int_{\R^{N}}\frac{|u|^{\alpha+2}}{|x|^{b}}\dd
\le\GN\,\|u\|_{\dHa}^{\frac{N\alpha+2b}{2}}\,\|u\|_{L^{2}}^{\frac{4-2b-(N-2)\alpha}{2}},
\qquad u\in\Ha(\R^{N}),
\end{equation}
and proved that its sharp constant is attained by a positive solution of the
stationary equation \eqref{eq:ellipticQ} below \cite[Lemma~3.2 and
Proposition~1.1]{CamposGuzman2021}. With this ground state, they obtained the
alternative between global existence and finite-time blow-up below the ground
state, the blow-up part being proved for data of finite variance or radial data
\cite[Theorem~1.3]{CamposGuzman2021}. They also proved small-data global results,
a scattering criterion in Strichartz norms and the existence of wave operators
\cite[Theorems~1.8--1.10 and~1.12]{CamposGuzman2021}. Building on these results, Hajaiej,
Luo and Wang \cite{HLW2024} considered the three-dimensional cubic model ($N=3$,
$\alpha=2$, $0<b<1$, $a>-\frac14$). In the focusing case, they proved scattering
below the ground state. In the defocusing case, they established global
well-posedness and scattering in $\Ha(\R^{3})$ for arbitrary data. Their proof
adapts the virial--Morawetz method of \cite{DodsonMurphy2017,CamposCardoso2022}.
In the attractive case $a<0$, additional care is needed, because only a weighted
dispersive estimate is available \cite{FFFP2013}.

In a parallel direction, Boulaaras, Ghanmi and the second author \cite{BGS2023}
proved energy scattering below the ground state for \eqref{eq:main} in three space
dimensions, for $a>0$, for every $0<b<2$ and for every power in the intercritical
range \eqref{eq:range}, without any radial assumption. Their argument follows
Dodson and Murphy \cite{DodsonMurphy2017}. A scattering criterion based on the
dispersive estimate \eqref{eq:disp}, which is where the sign condition on $a$
enters, is combined with a Morawetz estimate that exploits the decay of the weight
$|x|^{-b}$ at infinity. The threshold in \cite{BGS2023} is expressed through a
solution $\varphi$ of \eqref{eq:ellipticQ} which is an optimizer of
\eqref{eq:GNintro} and satisfies the Pohozaev identities. It is formulated either
by a condition on the potential energy, which is not conserved,
\[
\sup_{t}\,\Mass(u(t))^{\sigma}\int_{\R^{3}}\frac{|u(t)|^{\alpha+2}}{|x|^{b}}\dd
<\Mass(\varphi)^{\sigma}\int_{\R^{3}}\frac{|\varphi|^{\alpha+2}}{|x|^{b}}\dd ,
\]
or by the conserved conditions
$\Mass(u_0)^{\sigma}\Ea(u_0)<\Mass(\varphi)^{\sigma}\Ea(\varphi)$ and
$\|u_0\|_{L^2}^{\sigma}\|u_0\|_{\dHa}<\|\varphi\|_{L^2}^{\sigma}\|\varphi\|_{\dHa}$.
The exponent is the scale-invariant one, $\sigma=\frac{1-s_c}{s_c}$ of
\eqref{eq:sigma} below. As explained in \cite{BGS2023}, this complements the work
of An, Jang and Kim \cite{AJK2023}, where the alternative between global existence
and blow-up below the ground state is established for the same model; see also
Deng, Lu and Meng \cite{DLM2023}. In the mass-critical case, Boulaaras and the
second author \cite{BS2025} proved finite-time blow-up for data of negative energy,
without any radial or finite-variance assumption. The effect of a time-dependent
damping term on global existence and scattering was studied by Hamouda and the
present authors \cite{HMS2025}.

The local well-posedness theory of \eqref{eq:main} in the energy space
$\Ha(\R^N)$, $N\ge 3$, is summarized in Table~\ref{tab_LWP}. Recall that
$\Ha(\R^N)=H^1(\R^N)$, with equivalent norms, for $a>\aH$.

\begin{table}[!htbp]
\centering
\small
\caption{Local well-posedness of \eqref{eq:main} in the energy space.}
\label{tab_LWP}
\renewcommand{\arraystretch}{2.1}
\setlength{\tabcolsep}{2.5pt}
\setlength{\aboverulesep}{0pt}
\setlength{\belowrulesep}{0pt}
\begin{tabular}{l l l c l}
\toprule[1pt]
\rowcolor{tabhead}
\textbf{Coupling $a$} & \textbf{Weight $b$} & \textbf{Power $\alpha$} & \textbf{Ref.} & \textbf{Comment} \\
\midrule[0.6pt]
$a\ge {\aH}$
  & $0<b<2$
  & $0<\alpha<\frac{4-2b}{N-2}$
  & \cite{Suzuki2012,Suzuki2012-bis,Suzuki2016}
  & energy method \\
\rowcolor{tabrow}
$a> {\aH}$
  & $0\le b<1$
  & $\frac{2-2b}{N}<\alpha\,{\leq}\frac{2-2b}{N-2}$
  & \cite{CamposGuzman2021}
  & Kato's method \\
$a> {\aH}+\Big(\frac{\alpha(N-2)-2+2b}{2(\alpha+1)}\Big)^{2}$
  & $0\le b<\min\{\frac N2,2\}$
  & $\frac{2(1-b)_{+}}{N-2}<\alpha<\frac{4-2b}{N-2}$
  & \cite{CamposGuzman2021}
  & Kato's method \\
\rowcolor{tabrow}
$a>\frac{(N+2-2b)^{2}-4}{(N+2-2b)^{2}}\, {\aH}$
  & $0<b<\frac4N$
  & $\alpha=\frac{4-2b}{N-2}$
  & \cite{Kim}
  & energy-critical case \\
\bottomrule[1pt]
\end{tabular}
\end{table}

\emph{Normalized solutions.} The second half of \cite{HLW2024} takes a different
perspective. Instead of prescribing the frequency of a standing wave, one fixes
its mass and seeks critical points of $\Ea$ on the sphere
\begin{equation}\label{eq:Sc}
S(c):=\Big\{u\in\Ha(\R^{N}):\|u\|_{L^{2}}=c\Big\}.
\end{equation}
This is the natural formulation when the underlying physical model fixes the
number of particles rather than the chemical potential. The study of such
normalized solutions was initiated by Jeanjean \cite{Jeanjean1997} in the
mass-supercritical regime, and it has since become a substantial area of research.
Among the many contributions, we mention Bellazzini, Jeanjean and Luo
\cite{BJL2013}, Jeanjean and Luo \cite{JeanjeanLuo2013}, Bartsch and de Valeriola
\cite{BartschValeriola2013}, Noris, Tavares and Verzini \cite{NTV2015}, Bartsch
and Soave \cite{BartschSoave2017}, Hirata and Tanaka \cite{HirataTanaka2019},
Jeanjean and Lu \cite{JeanjeanLu2020} and Soave \cite{Soave2020a,Soave2020b}.
Equations with a potential were treated by Bartsch, Molle, Rizzi and Verzini
\cite{BMRV2021}, and the connection with orbital stability was studied by
Jeanjean, Jendrej, Le and Visciglia \cite{JJLV2022}. Nonlocal operators and
inhomogeneous nonlinearities were considered by Luo and Hajaiej
\cite{LuoHajaiej2022} and by Bhimani, Hajaiej, Haque and Luo \cite{BHHL2023}.

In the range \eqref{eq:range}, the functional $\Ea$ is unbounded from below on
$S(c)$. One therefore minimizes on the Pohozaev-type set
\begin{equation}\label{eq:Vc}
V(c):=\Big\{u\in S(c):\Qa(u)=0\Big\},\qquad
\Qa(u):=\frac{d}{dt}\Big|_{t=1}\Ea(u^{t}),\quad u^{t}(x):=t^{N/2}u(tx),
\end{equation}
which contains every sufficiently regular solution of the stationary equation. A
direct computation gives
\begin{equation}\label{eq:Qdef}
\Qa(u)=\|u\|_{\dHa}^{2}-\frac{N\alpha+2b}{2(\alpha+2)}
\int_{\R^{N}}\frac{|u|^{\alpha+2}}{|x|^{b}}\dd .
\end{equation}
Hajaiej, Luo and Wang studied the level
\begin{equation}\label{eq:mc}
m_a(c):=\inf_{u\in V(c)}\Ea(u).
\end{equation}
They proved that it is positive and attained for every $c>0$ in their
three-dimensional setting. This yields a normalized solution $u_c$ with some
frequency $\omega_c>0$. They also showed that no such solution exists in the
defocusing case. Their argument, however, leaves open whether $u_c$ is a
\emph{ground state}, that is, whether it has least action among all nontrivial
solutions with the same frequency \cite[Remark~1.7]{HLW2024}.

\subsection{The question}\label{ssec:question}

The two parts of \cite{HLW2024} thus produce two objects which, a priori, have
nothing to do with each other. On the dynamical side, the threshold is expressed
through a ground state $Q$ of
\begin{equation}\label{eq:ellipticQ}
\Lc Q+Q=|x|^{-b}|Q|^{\alpha}Q ,
\end{equation}
by means of products of mass, energy and kinetic energy as in \eqref{eq:HR} and
\eqref{eq:KMVZ}. On the variational side, the natural object is the number
$m_a(c)$, which depends only on the prescribed mass. The purpose of this paper is
to connect the two, and more precisely to answer the following question.

\begin{question}\label{q:main}
Is the normalized minimizer constructed in \cite{HLW2024} a ground state of the
stationary problem, and is the level $m_a(c)$ a sharp threshold separating
scattering from blow-up among data of mass $c$?
\end{question}

Both answers are affirmative, with the restrictions on the scattering part stated
in Subsection \ref{ssec:results}. The key to both is that $m_a(c)$ can be computed
in closed form. Along the $L^2$-preserving dilations $u\mapsto u^{t}$, the energy
has a single maximum. Its value depends on $u$ only through the mass and the
Gagliardo--Nirenberg quotient, since the kinetic energy cancels out. The
constrained minimax problem thus reduces to the optimization of that quotient.
Once this is known, the variational and dynamical pictures merge. The level
$m_a(c)$ is the Holmer--Roudenko threshold written in the variables adapted to a
problem with fixed mass, and the functional $\Qa$ that defines $V(c)$ is one eighth
of the second time derivative of the variance. This formulation is convenient for
three reasons. It is scale invariant by construction, so no bookkeeping of
critical exponents is needed. It does not depend on any normalization of the
ground state, nor on its uniqueness. Finally, it makes the blow-up half of the
dichotomy as transparent as the scattering half.

\subsection{Main results}\label{ssec:results}

We use the abbreviations
\begin{equation}\label{eq:betasig}
\beta:=\frac{N\alpha+2b}{2},\qquad
\sigma_c:=\frac{4-2b-(N-2)\alpha}{2},\qquad
\gamma:=\frac{2\sigma_c}{\beta-2}=\frac{2\big(4-2b-(N-2)\alpha\big)}{N\alpha+2b-4}.
\end{equation}
Thus \eqref{eq:range} is equivalent to the two conditions $\beta>2$ and
$\sigma_c>0$. We set
\begin{equation}\label{eq:K}
K_{\alpha,N,b}:=\frac{\beta-2}{2\beta}\Big(\frac{\alpha+2}{\beta}\Big)^{\frac{2}{\beta-2}}
=\frac{N\alpha+2b-4}{2(N\alpha+2b)}\Big(\frac{2\alpha+4}{N\alpha+2b}\Big)^{\frac{4}{N\alpha+2b-4}} .
\end{equation}
We shall also use the identity, established in Lemma \ref{lem:sigma},
\begin{equation}\label{eq:sigma}
\sigma:=\frac{\gamma}{2}=\frac{1-s_c}{s_c}.
\end{equation}
This identity shows that $\gamma$ is determined by the scaling
\eqref{eq:scaling}, and not by the particular choice of constraint. We denote by
$\GN=\GN(N,\alpha,b)$ the best constant in \eqref{eq:GNintro}. A function
$W\in\Ha\setminus\{0\}$ is called an optimizer if equality holds in
\eqref{eq:GNintro} with $u=W$. Throughout, solutions of the stationary equation
are understood in the weak sense of Definition \ref{def:weak} below.

\begin{theorem}[The normalized level]\label{thm:level}
Let $N\ge3$, $a>\aH$, and $0<b<2$, and assume that \eqref{eq:range} holds.
\begin{enumerate}
\item[(i)] For every $c>0$,
\begin{equation}\label{eq:levelformula}
m_a(c)=K_{\alpha,N,b}\;\GN^{-\frac{4}{N\alpha+2b-4}}\;c^{-\gamma}.
\end{equation}
In particular $c\mapsto c^{\gamma}m_a(c)$ is constant, and $m_a$ is real
analytic and strictly decreasing on $(0,\infty)$, with $m_a(c)\to\infty$ as
$c\downarrow0$ and $m_a(c)\to0$ as $c\to\infty$.
\item[(ii)] The constant $\GN$ is attained, and $m_a(c)$ is attained for every
$c>0$. A function $u\in V(c)$ satisfies $\Ea(u)=m_a(c)$ if and only if it is an
optimizer of \eqref{eq:GNintro}. Consequently the set of minimizers of $m_a(c)$
is $\{W^{t_W}:W\ \text{an optimizer with}\ \|W\|_{L^{2}}=c\}$, where $t_W>0$ is
the unique dilation parameter for which $W^{t_W}\in V(c)$.
\item[(iii)] Every minimizer $u_c$ of $m_a(c)$ is a ground state of
\begin{equation}\label{eq:stationary}
\Lc u+\omega u=|x|^{-b}|u|^{\alpha}u
\end{equation}
at the frequency
\begin{equation}\label{eq:omegac}
\omega_c=\Lambda_{\alpha,N,b}\,\GN^{-\frac{4}{N\alpha+2b-4}}\;
c^{-\frac{4\alpha}{N\alpha+2b-4}},\qquad
\Lambda_{\alpha,N,b}:=\frac{\sigma_c}{\alpha+2}\Big(\frac{\alpha+2}{\beta}\Big)^{\frac{\beta}{\beta-2}},
\end{equation}
that is, $S_{\omega_c}(u_c)\le S_{\omega_c}(v)$ for every nontrivial solution $v\in\Ha$ of
\eqref{eq:stationary}, where $\Sw:=\Ea+\frac{\omega}{2}\Mass$.
Conversely, every ground state $v$ of \eqref{eq:stationary}, at any frequency
$\omega>0$, is a minimizer of $m_a(\|v\|_{L^{2}})$, and $\omega=\omega_{\|v\|_{L^2}}$.
The map $c\mapsto\omega_c$ is a decreasing bijection of $(0,\infty)$ onto itself.
\end{enumerate}
\end{theorem}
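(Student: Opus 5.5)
The plan is to analyze the fiber map $t\mapsto \Ea(u^t)$ for fixed $u\in S(c)$. Write $A=\|u\|_{\dHa}^2$ and $B=\int|x|^{-b}|u|^{\alpha+2}\dd$. Since $\Lc$ and the weight are homogeneous, $\|u^t\|_{\dHa}^2=t^2A$ and $\int|x|^{-b}|u^t|^{\alpha+2}\dd=t^{\beta}B$. Hence
\[
f_u(t)=\tfrac12 t^2A-\tfrac{1}{\alpha+2}t^\beta B .
\]
Because $\beta>2$, $f_u$ has a unique critical point $t_u$, and it is a strict global maximum on $(0,\infty)$. Moreover $\Qa(u^t)=t f_u'(t)$, so $u^t\in V(c)$ exactly when $t=t_u$.

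Solving $t_u^{\beta-2}=(\alpha+2)A/(\beta B)$ and substituting gives
\[
\max_t f_u=\frac{\beta-2}{2\beta}\Big(\frac{\alpha+2}{\beta}\Big)^{\frac{2}{\beta-2}}\Big(\frac{A^{\beta}}{B^2}\Big)^{\frac1{\beta-2}} .
\]
By \eqref{eq:GNintro} we have $B\le \GN A^{\beta/2}c^{\sigma_c}$, so $A^\beta/B^2\ge \GN^{-2}c^{-2\sigma_c}$. Therefore $\max_t f_u\ge K_{\alpha,N,b}\GN^{-2/(\beta-2)}c^{-\gamma}$, with equality if and only if $u$ is an optimizer. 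The quotient $A^\beta/B^2$ is dilation invariant, so the same holds for every $u^t$.

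Since every $v\in V(c)$ satisfies $\Ea(v)=\max_t f_v$, taking the infimum over $V(c)$ gives the lower bound in \eqref{eq:levelformula}. For the upper bound and for (ii), I use that $\GN$ is attained (Proposition \ref{prop:GN}). Optimizers are invariant under $L^2$-preserving dilations and under the amplitude scaling $W\mapsto \lambda W$, because the inequality is homogeneous of degree $\alpha+2$ on both sides. So an optimizer of mass $c$ exists, and projecting it to $V(c)$ via $t_W$ attains the bound. The characterization of minimizers is the equality case above. The dependence $c^{-\gamma}$ is explicit, which yields analyticity, monotonicity and the limits in (i).

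For (iii), I first show that a minimizer $u_c$ solves \eqref{eq:stationary}. Because $u_c$ is a GN optimizer, it is an unconstrained critical point of the Weinstein-type functional
\[
J(u)=\frac{\|u\|_{\dHa}^{\beta}\|u\|_{L^2}^{\sigma_c}}{\int|x|^{-b}|u|^{\alpha+2}\dd}.
\]
Its Euler--Lagrange equation is $\beta\frac{\Lc u}{A}+\sigma_c\frac{u}{c^2}=(\alpha+2)\frac{|x|^{-b}|u|^\alpha u}{B}$ (here $c^2=\Mass(u)$). On $V(c)$ one has $B=\frac{(\alpha+2)A}{\beta}$, so this becomes $\Lc u+\omega u=|x|^{-b}|u|^\alpha u$ with $\omega=\sigma_c A/(\beta c^2)$. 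Expressing $A$ through $m_a(c)=\frac{\beta-2}{2\beta}A$ gives \eqref{eq:omegac}, after checking the exponents: $\gamma+2=\frac{4\alpha}{N\alpha+2b-4}$.

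For the ground state property, let $v$ be any nontrivial weak solution at frequency $\omega$. It satisfies the Nehari identity, and it satisfies the Pohozaev identity $\Qa(v)=0$; I take the latter from the paper's result on weak solutions, which I expect to be established in the section containing Definition \ref{def:weak}. Combining the two identities gives $\Sw(v)=\frac{\sigma_c}{\beta-2}\cdot\frac{\omega}{\alpha+2}\cdots$ as a positive multiple of $\omega\Mass(v)$ plus $\Ea(v)$. Concretely, $\Qa=0$ gives $\Ea(v)=\frac{\beta-2}{2\beta}A_v$, and Nehari gives $\omega\Mass(v)=\frac{\alpha+2-\beta}{\beta}A_v\cdot\frac{2}{\alpha+2}$-type relations. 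The upshot is
\[
\Sw(v)=h(\omega,\Mass(v))\ \text{with}\ \Ea(v)\ge m_a(\|v\|_{L^2}).
\]
I then eliminate the mass. Using $\Ea(v)\ge K\GN^{-2/(\beta-2)}\|v\|^{-\gamma}$ together with the relation between $\omega$, $A_v$ and $\Mass(v)$, I get the lower bound $\Sw(v)\ge \min_{c'}\big(K\GN^{-2/(\beta-2)}c'^{-\gamma}+\tfrac{\omega}{2}c'^2\big)$. Equality holds exactly for GN optimizers whose mass satisfies $\omega=\omega_{c'}$.

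\textbf{Main obstacle.} The hardest step is this last comparison: getting a clean inequality $S_\omega(v)\ge S_\omega(u_c)$ without assuming $v$ has mass $c$. My first approach is to use the relation forced by Nehari plus Pohozaev, namely $\omega\|v\|_{L^2}^2=\frac{2\sigma_c}{\beta}\cdot\frac{\beta}{\beta-2}\Ea(v)$, and combine it with $\Ea(v)\ge m_a(\|v\|_{L^2})$. This yields $\omega\ge \omega_{\|v\|_{L^2}}$. Since $\omega_c$ is a decreasing bijection, this gives $\|v\|_{L^2}\ge c$ where $\omega=\omega_c$. Then $\Sw(v)=\kappa\,\omega\|v\|_{L^2}^2\ge \kappa\,\omega c^2=\Sw(u_c)$ for a fixed positive constant $\kappa$. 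The converse follows from the equality cases.
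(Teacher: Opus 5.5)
Parts (i) and (ii), the Euler--Lagrange equation for $u_c$ and the computation of $\omega_c$ are correct and follow the paper's argument. (The paper obtains the formula in (i) without compactness, from $\sup_{S(c)}J=\GN$ and the minimax characterization. Attainment is available here, so your upper bound is fine.)

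\textbf{The gap: the Pohozaev identity.} It lies in the ground-state comparison. There you need $\Qa(v)=0$ for \emph{every} nontrivial weak solution $v\in\Ha$ of \eqref{eq:stationary}, and you plan to take it from the paper. The paper proves no such identity, and Remark \ref{rmk:pohozaev} says explicitly that none of its proofs uses it. This is not a formality. For a general $H^1$ solution, the multiplier $x\cdot\nabla v$ is not an admissible test function in Definition \ref{def:weak}. A rigorous derivation needs regularity of $v$ away from the origin, and control of boundary terms at infinity and at $0$, where both $a|x|^{-2}$ and $|x|^{-b}$ are singular. Without $\Qa(v)=0$ you cannot place $v$ in $V(\|v\|_2)$, so $\Ea(v)\ge m_a(\|v\|_2)$ is unavailable; recall that $\Ea$ is unbounded below on $S(c)$. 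Your chain $\omega\ge\omega_{\|v\|_2}\Rightarrow\|v\|_2\ge c$ then fails, and so does the min-over-$c'$ bound. The converse, which you obtain ``from the equality cases'', rests on the same identity.

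\textbf{The missing idea: use only the Nehari identity.} The Nehari identity \eqref{eq:nehari} does follow, by testing with $h=v$. The paper uses it in Proposition \ref{prop:leastaction}:
\begin{itemize}
\item On the Nehari set, $\Sw(v)=\frac{\alpha}{2(\alpha+2)}B(v)$.
\item Set $x=A(v)/B(v)$ and $y=\omega\Mass(v)/B(v)$, so that $x+y=1$. Then \eqref{eq:GN} becomes $B(v)^{-\alpha/2}\le\GN\,\omega^{-\sigma_c/2}x^{\beta/2}y^{\sigma_c/2}$.
\item Maximizing $x^{\beta/2}(1-x)^{\sigma_c/2}$ at $x=\frac{\beta}{\alpha+2}$ gives $B(v)\ge B_\omega$. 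Equality holds if and only if $v$ is an optimizer with $\Qa(v)=0$.
\item Minimizers attain equality at $\omega=\omega_c$, which gives the ground-state property. A ground state must then attain equality as well, so $\Qa(v)=0$ is \emph{derived}, not assumed.
\end{itemize}

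\textbf{Repairing your own route.} Your min-over-$c'$ idea can be rescued in the same spirit. For $v$ on the Nehari set, $s\mapsto\Sw(sv)=B(v)\big(\frac{s^2}{2}-\frac{s^{\alpha+2}}{\alpha+2}\big)$ has its strict maximum at $s=1$. Hence $\Sw(v)\ge\Sw(s_0v)$, where $s_0^{\alpha}=\frac{(\alpha+2)A(v)}{\beta B(v)}$ puts $s_0v$ in $V(s_0\|v\|_2)$. This gives
\[
\Sw(v)\ \ge\ m_a(s_0\|v\|_2)+\frac{\omega}{2}s_0^2\|v\|_2^2\ \ge\ \min_{c'>0}\Big(m_a(c')+\frac{\omega}{2}c'^2\Big).
\]
The function of $c'$ on the right is strictly convex. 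Since $\Lambda_{\alpha,N,b}=\gamma K_{\alpha,N,b}$, it is minimized exactly where $\omega_{c'}=\omega$, with value $S_{\omega}(u_{c'})$. The equality case forces $s_0=1$, that is $\Qa(v)=0$, together with $\Ea(v)=m_a(\|v\|_2)$ and $\omega=\omega_{\|v\|_2}$. This yields the converse as well.
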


\begin{remark}\rm\label{rmk:limiting}
The case $a=0$ is covered by Theorem \ref{thm:level}. The case $b=0$ is excluded,
and the proof shows exactly where $b>0$ is needed.
\begin{enumerate}[label=(\roman*)]
\item\emph{The case $a=0$ and $0<b<2$.} Then $\Lc=-\Delta$, and Theorem
\ref{thm:level} applies without change. The constant $\mathcal{C}_0$ is the sharp
constant of Farah \cite{Farah2016}, and \eqref{eq:levelformula} gives the
normalized level of the inhomogeneous equation without potential in terms of it.
For $N\ge3$, the positive radial solution of \eqref{eq:ellipticQ} with $a=0$ is
unique \cite{Yanagida1991,GenoudStuart2008}. By Theorem \ref{thm:equiv}, the
conditions $\Ea(u_0)<m_a(c)$ and $\Qa(u_0)\gtrless0$, with $a=0$, are equivalent to
the scale-invariant conditions used for this equation in
\cite{Farah2016,FarahGuzman2017,FarahGuzman2020,Campos2021,MiaoMurphyZheng2021,CamposCardoso2022,CFGM2022,Murphy2022}.
Hence the global existence, blow-up and scattering results proved there hold
below $m_0(c)$, in the corresponding ranges of $(N,\alpha,b)$.

\item\emph{The case $b=0$.} In Theorem \ref{thm:level}, the hypothesis $b>0$ is
used only through the compactness property of Lemma \ref{lem:weighted}(b), that
is, only for the attainment of $\GN$. Part (i) holds for $b=0$ and every
$a>\aH$, since its proof uses only the finiteness of $\GN$ and the scaling
relations \eqref{eq:scalingAB}. Parts (ii) and (iii) hold for $b=0$ whenever
$\GN$ is attained. Three situations occur.
If $a=b=0$, then $\mathcal{C}_0$ is attained by the ground state of Weinstein
\cite{Weinstein1983} and its translates. Theorem \ref{thm:level} then reduces to
classical facts on normalized solutions \cite{Jeanjean1997}, the dichotomy below
the level is that of
\cite{HolmerRoudenko2008,DHR2008,AkahoriNawa2013,Guevara2014,FXC2011}, and the
strong instability is due to Berestycki and Cazenave \cite{BerestyckiCazenave1981}.
If $b=0$ and $\aH<a<0$, then $\GN>\mathcal{C}_0$, and $\GN$ is attained
\cite{KMVZ2017,MNN2021}. Parts (ii) and (iii) remain valid, and the ground state is
unique up to phase by \cite{MNN2021}.
If $b=0$ and $a>0$, the picture changes. Since $\|u\|_{\dHa}$ increases with $a$,
we have $\GN\le\mathcal{C}_0$. Translating Weinstein's ground state $Q_0$ to
infinity, we get $J\big(Q_0(\cdot-y)\big)\to\mathcal{C}_0$ as $|y|\to\infty$,
because $\int|Q_0(x-y)|^2|x|^{-2}\dd\to0$. Hence $\GN=\mathcal{C}_0$. This constant
is not attained. Indeed, an optimizer $W$ would satisfy
$\mathcal{C}_0=J(W)<J_0(W)\le\mathcal{C}_0$, where $J_0$ denotes the quotient
\eqref{eq:GNquotient} with $a=0$. Consequently, $m_a(c)=m_0(c)$ for every $a\ge0$,
and, by the argument used in the proof of part (ii), $m_a(c)$ is not attained for
$a>0$. The proof of Theorem \ref{thm:equiv} still applies, with $Q$ replaced by
$Q_0$, and it gives exactly the conditions \eqref{eq:KMVZ} with $a\wedge0=0$. This
explains the appearance of $a\wedge0$ in \cite{KMVZ2017}. In this case there is no
minimizer, and the sharpness statement of Theorem \ref{thm:dichotomy} has no
counterpart in the form given there.
\end{enumerate}
\end{remark}
\begin{remark}\rm \label{rmk:sharphyp}
The assumptions $N\ge3$ and \eqref{eq:range} are essential for the result in its
present form. We briefly discuss the corresponding borderline and complementary
cases. The case $b=0$ is considered in Remark \ref{rmk:limiting}, while the
critical coupling $a=\aH$ is treated in Subsection \ref{ssec:hardy}.
\begin{enumerate}[label=(\roman*)]
\item \emph{The mass-critical case $\alpha=\frac{4-2b}{N}$.} In this case
$\beta=2$, and therefore the exponent of $\GN$ in \eqref{eq:levelformula} is not defined. By
\eqref{eq:scalingAB}, the fibering map reduces to
$h_u(t)=t^{2}\Ea(u)$, and hence it has no interior maximum. Proposition
\ref{prop:masscritical} shows that $V(c)=\emptyset$ for $c<c^*$, while
$m_a(c)=0$ for $c\ge c^*$, where $c^*$ is given by \eqref{eq:cstar}. Thus,
parts (i)--(iii) no longer hold in the stated form, and the relevant threshold
is determined by the mass alone. Moreover,
$c^*=\|Q\|_{2}$, where $Q$ is the ground state from
\cite[Lemma~3.2]{CamposGuzman2021}. This agrees with
\cite[Theorem~1.2]{CamposGuzman2021}.

\item \emph{The mass-subcritical case $0<\alpha<\frac{4-2b}{N}$.} Here
$\beta<2$. The unique critical point $t_u$ of $h_u$ is a strict global
minimum, and $\Ea<0$ on $V(c)$. Consequently,
$m_a(c)=\inf_{S(c)}\Ea$. The computation in Lemma \ref{lem:fibering} and the
proofs of parts (i) and (ii) therefore remain unchanged. In particular,
\eqref{eq:levelformula} still holds, now with $K_{\alpha,N,b}<0$ and
$\gamma<0$. Hence $m_a(c)<0$, $m_a(c)\to0$ as $c\downarrow0$, and
$m_a(c)\to-\infty$ as $c\to\infty$. The minimizers are again precisely the
optimizers of \eqref{eq:GN} with mass $c$ that belong to $V(c)$. In this
regime, $m_a(c)$ is the minimum of the energy on the sphere $S(c)$. This is
the setting of orbital stability considered in \cite{CazenaveLions1982}, rather
than that of a dynamical threshold. Moreover, every solution is global; see
\cite{Suzuki2016,CamposGuzman2021}.

\item \emph{The energy-critical case
$\alpha=\frac{4-2b}{N-2}$.} In this case $\sigma_c=0$ and $\gamma=0$, and
\eqref{eq:GN} becomes the Hardy--Sobolev type inequality
$B(u)\le\GN A(u)^{\frac{\alpha+2}{2}}$. This inequality is invariant under the
scaling $u\mapsto\lambda^{\frac{N-2}{2}}u(\lambda\cdot)$, which preserves
$A$ and $B$ but not the mass. Consequently, the level given by
\eqref{eq:maxvalue} is independent of $c$. At the same time, the compactness
property in Lemma \ref{lem:weighted}(b) is lost, since optimizing sequences may
either concentrate or spread out.

The argument used in part (ii) shows that $m_a(c)$ is attained for one, and
equivalently for every, $c>0$, if and only if \eqref{eq:GN} admits an optimizer
in $\dHa$ that belongs to $L^{2}(\R^N)$. The natural threshold is then given by
the energy of such an optimizer, independently of the mass, as in
Kenig and Merle \cite{KenigMerle2006} and, for $a=0$, Guzm\'an and Murphy
\cite{GuzmanMurphy2021}. The Cauchy theory is given in \cite{Kim}; see also
Table~\ref{tab_LWP}. For normalized solutions at the Sobolev-critical exponent,
we refer to \cite{Soave2020b,JJLV2022}. If
$\alpha>\frac{4-2b}{N-2}$, the functional $B$ is no longer controlled on
$\Ha$, and consequently $\GN=+\infty$.

\item \emph{The dimension.} The assumption $N\ge3$ enters through the Hardy
inequality \eqref{eq:hardy}, which yields \eqref{eq:equivnorm} and the
identification $\Ha=H^1$. It also enters through the Sobolev embedding used in
Lemma \ref{lem:weighted} and through the Cauchy theory. For $N=2$, we have
$\aH=0$, and \eqref{eq:hardy} fails. When $a>0$, the form domain of $\Lc$ is
then a proper subspace of $H^1(\R^2)$, and \eqref{eq:range} becomes
$\alpha>2-b$. For $N=1$, the form domain of $\Lc$ is
$H^1_0(\R\setminus\{0\})$, so the problem decouples into two problems on the
half-lines.

The variational argument in Section \ref{sec:variational} uses only
\eqref{eq:GN}, \eqref{eq:scalingAB}, and the compactness statement in
Lemma \ref{lem:weighted}(b). We therefore expect that it extends to $N=2$ and
$a>0$, with $\Ha$ replaced by the form domain of $\Lc$. The corresponding
dynamical results would, however, require an appropriate Cauchy theory in this
setting.
\end{enumerate}
\end{remark}

Part (iii) of Theorem~\ref{thm:level} answers the question raised in \cite[Remark~1.7]{HLW2024}. Part (ii) establishes the existence of normalized minimizers in every dimension $N\ge3$ and throughout the full range \eqref{eq:range}. In \cite{HLW2024}, this result was obtained only for the three-dimensional cubic case.

The argument is relatively short. In the absence of translation invariance, minimizing sequences may escape to spatial infinity, as occurs when $b=0$ and $a>0$. For $b>0$, the decay of the weight  $|x|^{-b}$ rules out this loss of compactness. Thus, the associated Gagliardo--Nirenberg quotient admits a maximizer; see \cite[Lemma~3.2]{CamposGuzman2021}. The explicit formula for $m_a(c)$  then transfers this property to the normalized minimization problem.

Moreover, \eqref{eq:levelformula} shows that $c\mapsto m_a(c)$  is a pure power function. This also provides a direct explanation of the strict monotonicity proved in \cite{HLW2024} by a scaling argument.

The next results concern the dynamics. For the scattering statement, we restrict
ourselves to the parameters covered by the scattering theory of \cite{HLW2024},
namely
\begin{equation}\label{eq:dynrange}
N=3,\qquad\alpha=2,\qquad0<b<1,\qquad a>-\tfrac14 ,
\end{equation}
for which $s_c=\frac{1+b}{2}\in(\frac12,1)$ and $\sigma=\frac{1-b}{1+b}$. We denote
by $\Sigma:=\{u\in\Ha:|x|u\in L^{2}\}$ the space of data with finite variance.

\begin{theorem}[Mass-constrained dichotomy]\label{thm:dichotomy}
Let $N\ge3$, $a>\aH$ and $0<b<2$. Assume that \eqref{eq:range} and Assumption~\ref{ass:cauchy} hold. Let $u_0\in\Ha(\R^{N})\setminus\{0\}$ satisfy
\begin{equation}\label{eq:below}
\Ea(u_0)<m_a(c),\qquad c:=\|u_0\|_{L^{2}} .
\end{equation}
Then $\Qa(u_0)\ne0$, the sign of $\Qa(u(t))$ does not depend on $t$, and the
following alternative holds.
\begin{enumerate}
\item[(i)] If $\Qa(u_0)>0$, the solution of \eqref{eq:main} is global. If
moreover \eqref{eq:dynrange} holds, it scatters in $\Ha(\R^{3})$: there exist
$u_\pm\in\Ha(\R^{3})$ such that $\|u(t)-e^{-it\Lc}u_\pm\|_{\Ha}\to0$ as
$t\to\pm\infty$.
\item[(ii)] If $\Qa(u_0)<0$ and $u_0\in\Sigma$, the solution blows up in finite
time in both time directions, and
\begin{equation}\label{eq:blowup}
\frac{d^{2}}{dt^{2}}\int_{{\R^{N}}}|x|^{2}|u(t,x)|^{2}\dd
\le 8\beta\big(\Ea(u_0)-m_a(c)\big)<0
\end{equation}
on the whole lifespan.
\end{enumerate}
Moreover, the condition \eqref{eq:below} is sharp: for every $c>0$ there is
$u_0\in V(c)\cap\Sigma$ with $\Ea(u_0)=m_a(c)$ whose solution is the global,
non-scattering standing wave $e^{i\omega_ct}u_c$.
\end{theorem}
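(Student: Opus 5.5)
The plan is to work entirely through the fibering map $h_u(t)=\Ea(u^t)=\frac{t^2}{2}A(u)-\frac{t^{\beta}}{\alpha+2}B(u)$. Here $A(u)=\|u\|_{\dHa}^2$ and $B(u)=\int|x|^{-b}|u|^{\alpha+2}\dd$, and we use the scaling relations \eqref{eq:scalingAB} and Theorem~\ref{thm:level}(i).

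\textbf{Step 1: $\Qa(u_0)\neq0$ and the sign is invariant.} For $u\in S(c)$, the function $h_u$ has a unique critical point $t_u$, which is a strict maximum. Moreover $\Qa(u^t)=t\,h_u'(t)$, and $\max_t h_u=h_u(t_u)\ge m_a(c)$ because $u^{t_u}\in V(c)$. If $\Qa(u_0)=0$, then $t_{u_0}=1$ and $\Ea(u_0)\ge m_a(c)$, which contradicts \eqref{eq:below}. Mass and energy are conserved and $t\mapsto u(t)$ is continuous in $\Ha$. Hence $\Qa(u(t))$ is continuous in time and never vanishes, so its sign is constant on the lifespan.

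\textbf{Step 2: the key quantitative bound.} Suppose $\Qa(u)<0$ with $u\in S(c)$. Then $t_u<1$, and $h_u$ is concave-like on $[t_u,1]$: since $h_u''\le h_u'/t$ there, or more directly by writing $\Qa(u)$ in terms of $h_u$, one gets
\[
\Qa(u)\le \beta\big(\Ea(u)-h_u(t_u)\big)\le\beta\big(\Ea(u)-m_a(c)\big).
\]
To prove this, I would use the identity
\[
\beta\Ea(u)-\Qa(u)=\Big(\frac{\beta}{2}-1\Big)A(u)
\]
and compare it with the same identity at $t_u$. The function $\phi(t)=\beta h_u(t)-t h_u'(t)=(\frac\beta2-1)t^2A(u)$ is increasing in $t$. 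Evaluating it at $t_u<1$ gives
\[
\beta h_u(t_u)\le\beta\Ea(u)-\Qa(u),
\]
which is exactly the claimed bound. I expect this inequality to be the main technical point, since everything else is standard.

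\textbf{Step 3: case (ii), blow-up.} For $u_0\in\Sigma$, the virial identity for \eqref{eq:main} with inverse-square potential gives $\frac{d^2}{dt^2}\int|x|^2|u|^2=8\Qa(u(t))$. This holds under Assumption~\ref{ass:cauchy}; the potential term combines into $\|u\|_{\dHa}^2$ by homogeneity. By Steps 1–2 and conservation of mass and energy, the right side is at most $8\beta(\Ea(u_0)-m_a(c))<0$ for all $t$, which is \eqref{eq:blowup}. The usual Glassey argument then forces the variance to become negative in finite time in both directions, so the lifespan is finite both ways.

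\textbf{Step 4: case (i), global existence and scattering.} If $\Qa(u(t))>0$, then $t_{u(t)}>1$. Hence
\[
\big(\tfrac12-\tfrac1\beta\big)A(u(t))<\Ea(u(t))-\tfrac1\beta\Qa(u(t))<\Ea(u_0)
\]
by the same identity. This bounds $\|u(t)\|_{\Ha}$ uniformly, and the blow-up alternative gives global existence. For scattering under \eqref{eq:dynrange}, I would translate the condition into the hypotheses of the scattering theorem of \cite{HLW2024}. Theorem~\ref{thm:level}(i)–(ii) expresses $m_a(c)c^{\gamma}$ through the optimizer $Q$. Then $\Ea(u_0)<m_a(c)$ together with $\Qa>0$ is equivalent to the scale-invariant mass–energy and mass–kinetic conditions $\Mass^{\sigma}\Ea<\Mass(Q)^{\sigma}\Ea(Q)$ and $\|u_0\|_2^{\sigma}\|u_0\|_{\dHa}<\|Q\|_2^{\sigma}\|Q\|_{\dHa}$. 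This is the content of Theorem~\ref{thm:equiv}, which I would invoke, and it yields scattering.

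\textbf{Step 5: sharpness.} Take $u_0=u_c$, a minimizer from Theorem~\ref{thm:level}(ii). By (iii) it solves \eqref{eq:stationary} with $\omega_c$, so $e^{i\omega_ct}u_c$ is a global solution. It does not scatter, since its $L^{\alpha+2}$-weighted norm is constant and nonzero. Finite variance follows from the exponential decay of solutions of \eqref{eq:stationary}, by a standard Agmon-type estimate for $\omega_c>0$.
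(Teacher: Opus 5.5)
Steps 1--4 are the paper's argument, and they are correct.
\begin{itemize}
\item Step 2 is Lemma~\ref{lem:coercivity}(i). Your monotonicity of $\beta h_u(t)-t h_u'(t)=\frac{\beta-2}{2}t^{2}A(u)$ on $[t_u,1]$ is the same computation as the paper's $\frac{\beta-2}{2\beta}A(u)=\Ea(u^{t_u})/t_u^{2}$.
\item Step 1 and the global part of Step 4 are Lemma~\ref{lem:invariance}.
\item Scattering is obtained exactly as in the paper, from Theorem~\ref{thm:equiv} and Proposition~\ref{prop:scatcrit}.
\end{itemize}
There are two small slips. In Step 4, the relation $(\frac12-\frac1\beta)A(u(t))<\Ea(u(t))-\frac1\beta\Qa(u(t))$ is an equality by \eqref{eq:master}; the conclusion is unaffected. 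In Step 5, you need the uniqueness in Assumption~\ref{ass:cauchy} to conclude that $e^{i\omega_ct}u_c\in C(\R;\Ha)\cap C^1(\R;H^{-1})$ is \emph{the} solution with data $u_c$, and not merely a solution.

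The one step that is asserted rather than proved is non-scattering. Constancy of $B(u(t))=B(u_c)>0$ contradicts scattering only once you know that $B(e^{-it\Lc}u_+)\to0$ as $t\to\infty$. Indeed, by Lemma~\ref{lem:weighted}(a), scattering would give $B(e^{-it\Lc}u_+)\to B(u_c)$.

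The usual justification of this decay is a dispersive estimate, and that is not available in the generality of the theorem. The unweighted estimate \eqref{eq:disp} is recorded only for $N=3$ and $a\ge0$ (or for radial data). For $a<0$, only the weighted version \eqref{eq:dispweighted} is available.

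The paper sidesteps this with the homogeneous Strichartz estimate for the pair $(4,r)$, $r=\frac{2N}{N-1}$. Since $\|u(t)\|_{r}=\|u_c\|_{r}$, scattering would force $\|e^{-it\Lc}u_+\|_{r}\ge\frac12\|u_c\|_{r}$ for large $t$. This contradicts $e^{-it\Lc}u_+\in L^4_tL^r_x$.

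Your formulation can also be repaired with tools already in the paper. By Lemma~\ref{lem:weighted}(b), it suffices that $e^{-it\Lc}u_+\rightharpoonup0$ weakly in $H^1$. This holds for the following reason. For $g\in C_c^\infty(\R^N)$, the function $t\mapsto\int e^{-it\Lc}u_+\,\bar g\,dx$ is Lipschitz, with derivative bounded by $\|u_+\|_{\dHa}\|g\|_{\dHa}$. By Strichartz it also belongs to $L^4(\R)$, hence it tends to $0$. Density of $C_c^\infty$ in $L^2$ and the uniform $H^1$ bound then give the weak convergence.
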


Global existence in part (i), part (ii) and the sharpness statement are proved
here under Assumption \ref{ass:cauchy}. This assumption holds in the whole range
\eqref{eq:range} (Proposition \ref{prop:lwp}). The scattering statement in part
(i) relies on the scattering criterion of \cite{HLW2024}, used in the
scale-invariant form explained in Remark \ref{rmk:exponent} (see Proposition
\ref{prop:scatcrit}). It is the only statement of the paper that rests on an
external result used in a form that differs from the published one. The
dichotomy is exhaustive, because $\Qa$ cannot vanish below the level: $\Qa(u_0)=0$
would put $u_0$ in $V(c)$ and force $\Ea(u_0)\ge m_a(c)$. 
\begin{remark}\rm \label{rmk:negenergy}
Since $m_a(c)>0$, Theorem \ref{thm:dichotomy} applies to every $u_0\ne0$ with
$\Ea(u_0)\le0$. For such data, \eqref{eq:conserved} and \eqref{eq:Qdef} give
$\Qa(u_0)=\beta\Ea(u_0)-\frac{\beta-2}{2}\|u_0\|_{\dHa}^{2}<0$. Hence every
$u_0\in\Sigma\setminus\{0\}$ with $\Ea(u_0)\le0$ blows up in finite time in both
time directions. This is the criterion of Glassey \cite{Glassey1977}; see
\cite{Farah2016} for $a=0$ and \cite{Dinh2021} for $b=0$. Conversely, part (i)
concerns only data of positive energy, since $\Qa(u_0)>0$ forces
$0<\Ea(u_0)<m_a(c)$ (Lemma \ref{lem:invariance}). The substance of Theorem
\ref{thm:dichotomy} thus lies in the range $0<\Ea(u_0)<m_a(c)$, where the energy
alone does not determine the sign of $\Qa(u_0)$.
\end{remark}

The relation with the
Holmer--Roudenko formulation is as follows.

\begin{theorem}[Equivalence with the scale-invariant threshold]\label{thm:equiv}
Let $N\ge3$, $a>\aH$, and $0<b<2$. Assume that \eqref{eq:range} holds, and let $Q=Q_{a,b}$  be a ground state solution of \eqref{eq:ellipticQ}.
Then for every
$u_0\in\Ha\setminus\{0\}$ of mass $c=\|u_0\|_{L^{2}}$,
\begin{equation}\label{eq:equivE}
\Ea(u_0)<m_a(c)
\qquad\Longleftrightarrow\qquad
\Mass(u_0)^{\sigma}\Ea(u_0)<\Mass(Q)^{\sigma}\Ea(Q),
\end{equation}
and, when these conditions hold,
\begin{equation}\label{eq:equivQ}
\Qa(u_0)>0
\qquad\Longleftrightarrow\qquad
\|u_0\|_{L^{2}}^{\sigma}\|u_0\|_{\dHa}<\|Q\|_{L^{2}}^{\sigma}\|Q\|_{\dHa},
\end{equation}
with $\sigma=\frac{1-s_c}{s_c}$ as in \eqref{eq:sigma}. If moreover
$\Ea(u_0)>0$, the right-hand side of \eqref{eq:equivE} can be written in the
familiar form
$\Mass(u_0)^{1-s_c}\Ea(u_0)^{s_c}<\Mass(Q)^{1-s_c}\Ea(Q)^{s_c}$.
\end{theorem}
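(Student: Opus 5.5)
The plan is to reduce everything to Theorem \ref{thm:level}. First we pin down the quantities attached to $Q$. Since $Q$ is a ground state of \eqref{eq:ellipticQ} at frequency $\omega=1$, Theorem \ref{thm:level}(iii) (converse part) says that $Q$ minimizes $m_a(c_Q)$, where $c_Q:=\|Q\|_{L^2}$. Hence $Q\in V(c_Q)$ and $\Ea(Q)=m_a(c_Q)$. From $\Qa(Q)=0$ and \eqref{eq:Qdef} we get $B(Q)=\frac{\alpha+2}{\beta}A(Q)$, with $A=\|\cdot\|_{\dHa}^2$ and $B=\int|x|^{-b}|\cdot|^{\alpha+2}$. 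It follows that $\Ea(Q)=\frac{\beta-2}{2\beta}A(Q)>0$.

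For \eqref{eq:equivE} we use \eqref{eq:levelformula}: $c^\gamma m_a(c)$ is a constant $\kappa$. Note also that $\Mass=c^2$ and $\sigma=\gamma/2$, so $\Mass^\sigma=c^\gamma$. Then
\[
\Ea(u_0)<m_a(c)\iff c^\gamma\Ea(u_0)<\kappa=c_Q^\gamma m_a(c_Q)=\Mass(Q)^\sigma\Ea(Q),
\]
which is \eqref{eq:equivE}. When $\Ea(u_0)>0$, we raise both sides to the power $s_c$ and use $\sigma s_c=1-s_c$ from Lemma \ref{lem:sigma}. This gives the familiar form.

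For \eqref{eq:equivQ} we work with the fibering map $h(t)=\Ea(u_0^t)=\frac{t^2}{2}A-\frac{t^\beta}{\alpha+2}B$, where $A=A(u_0)$ and $B=B(u_0)$. It has a unique maximum at $t_*$, and $\Qa(u_0^t)=t\,h'(t)$. So $\Qa(u_0)>0$ iff $1<t_*$, and $t_*^{\beta-2}=\frac{(\alpha+2)A}{\beta B}$; at $t_*$ one has $A(u_0^{t_*})=t_*^2A$. Now suppose $\Qa(u_0)>0$. Then $h$ is increasing on $[1,t_*]$, and $u_0^{t_*}\in V(c)$. The energy identity $\Ea=\frac{\beta-2}{2\beta}A+\frac1\beta\Qa$ gives
\[
\frac{\beta-2}{2\beta}A(u_0)<\Ea(u_0)<m_a(c)=\kappa c^{-\gamma}.
\]
Multiplying by $c^\gamma$ and comparing with $\frac{\beta-2}{2\beta}A(Q)c_Q^\gamma=\kappa$, we get $c^\gamma A(u_0)<c_Q^\gamma A(Q)$. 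Taking square roots, and using $\gamma/2=\sigma$, this is $\|u_0\|_{L^2}^\sigma\|u_0\|_{\dHa}<\|Q\|_{L^2}^\sigma\|Q\|_{\dHa}$.

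For the reverse direction, suppose $\Qa(u_0)<0$; $\Qa(u_0)=0$ is excluded below the level, as noted after Theorem \ref{thm:dichotomy}. We must show $c^\gamma A(u_0)\ge c_Q^\gamma A(Q)$, and this is the step needing care. By \eqref{eq:GNintro}, $B\le\GN A^{\beta/2}c^{\sigma_c}$. The condition $\Qa<0$ reads $A<\frac{\beta}{\alpha+2}B\le\frac{\beta}{\alpha+2}\GN A^{\beta/2}c^{\sigma_c}$, hence
\[
A^{\frac{\beta-2}{2}}>\frac{\alpha+2}{\beta\,\GN}\,c^{-\sigma_c}.
\]
Since $\gamma=2\sigma_c/(\beta-2)$, this means $c^\gamma A>\big(\frac{\alpha+2}{\beta\GN}\big)^{2/(\beta-2)}$. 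The same computation with equality holds for $Q$, because $Q$ is an optimizer on $V(c_Q)$ by Theorem \ref{thm:level}(ii). So the right-hand side equals $c_Q^\gamma A(Q)$, which gives the reverse strict inequality. The two implications together prove \eqref{eq:equivQ}. The only delicate points are the exponent bookkeeping ($\gamma=2\sigma$, $\sigma_c$ versus $\gamma$) and the legitimacy of invoking the converse of Theorem \ref{thm:level}(iii) for a ground state at frequency $1$.
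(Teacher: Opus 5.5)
Your proof is correct and follows the paper's argument essentially step by step. The energy equivalence comes from the constancy of $c^{\gamma}m_a(c)$ together with $\Ea(Q)=m_a(\|Q\|_2)$; the forward half of \eqref{eq:equivQ} comes from $\frac{\beta-2}{2\beta}A(u_0)<\Ea(u_0)<m_a(c)$; and the reverse half is the Gagliardo--Nirenberg bound when $\Qa(u_0)<0$, which the paper packages as $\Qa(u_0)\le0\Rightarrow A(u_0)\ge A_*(c)$ with $A_*(c)=\frac{2\beta}{\beta-2}m_a(c)$, so it never needs to exclude $\Qa(u_0)=0$ separately (the remarks about $t_*$ in your third paragraph are never used).
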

\begin{remark}\rm 
In the potential-well theory of Payne and Sattinger \cite{ps}, the region below
the ground-state level is divided by the sign of a functional into a stable set
and an unstable set, and the two conditions are used together. Theorem
\ref{thm:equiv} gives this combined form at once:
\begin{multline*}
\Big[\Ea(u_0)<m_a(c)\ \text{and}\ \Qa(u_0)>0\Big]\\
\iff
\Big[\Mass(u_0)^{\sigma}\Ea(u_0)<\Mass(Q)^{\sigma}\Ea(Q)\ \text{and}\
\|u_0\|_{2}^{\sigma}\|u_0\|_{\dHa}<\|Q\|_{2}^{\sigma}\|Q\|_{\dHa}\Big].
\end{multline*}
The same equivalence holds with $\Qa(u_0)<0$ on the left and
$\|u_0\|_{2}^{\sigma}\|u_0\|_{\dHa}>\|Q\|_{2}^{\sigma}\|Q\|_{\dHa}$ on the right.
This last inequality is strict. Indeed, when $\Ea(u_0)<m_a(c)$ and
$\Qa(u_0)<0$, the proof of Lemma \ref{lem:coercivity}(i) gives
$\frac{\beta-2}{2\beta}\|u_0\|_{\dHa}^{2}>m_a(c)$, that is,
$\|u_0\|_{\dHa}^{2}>A_*(c)$, with $A_*(c)$ given by \eqref{eq:Astar}. The separate
formulation of Theorem \ref{thm:equiv} is slightly stronger, since
\eqref{eq:equivE} requires no sign condition. The energy condition must be
written with $m_a(c)$, or equivalently with the factor $\Mass^{\sigma}$. The
condition $\Ea(u_0)<\Ea(Q)$ alone is not invariant under \eqref{eq:scaling}.
\end{remark}
\begin{remark}\rm \label{rmk:exponent}
Both sides of \eqref{eq:equivE} and of \eqref{eq:equivQ} are invariant under
\eqref{eq:scaling}, as they should be. The exponent $\sigma$ equals $1$ only when
$s_c=\frac12$, which in the three-dimensional cubic case means $b=0$. The
threshold conditions in \cite[Theorem~1.1]{HLW2024} are written with the exponent
$1$, as in \eqref{eq:HR}, also when $b>0$. In that literal form, such conditions
cannot be sufficient for scattering. Indeed, if $Q$ solves \eqref{eq:ellipticQ},
then $Q_\omega(x):=\omega^{\frac{2-b}{2\alpha}}Q(\sqrt\omega\,x)$ solves
\eqref{eq:stationary}, and one checks that
$\Mass(Q_\omega)\Ea(Q_\omega)=\omega^{1-2s_c}\Mass(Q)\Ea(Q)$ and
$\|Q_\omega\|_{L^2}\|Q_\omega\|_{\dHa}=\omega^{\frac{1-2s_c}{2}}\|Q\|_{L^2}\|Q\|_{\dHa}$.
When $s_c>\frac12$ and $\omega>1$, both products are strictly below those of
$Q$, yet $e^{i\omega t}Q_\omega$ does not scatter. The conditions must therefore
be read in the scale-invariant form $\Mass^{1-s_c}\Ea^{s_c}$, as in
\cite{FarahGuzman2017,MiaoMurphyZheng2021,CamposGuzman2021}. This is how we use
\cite[Theorem~1.1]{HLW2024}; see Proposition \ref{prop:scatcrit} and the
discussion following it. Theorem \ref{thm:dichotomy} is free of this ambiguity,
because $m_a(c)$ has the correct homogeneity by construction.
\end{remark}

Combining Theorem \ref{thm:dichotomy}(ii) with the fibering map, we obtain the
instability of the standing waves produced by the variational construction. Recall that a standing wave $e^{i\omega t}\varphi$ is
\emph{strongly unstable} if for every $\eps>0$ there is $v_0$ with
$\|v_0-\varphi\|_{\Ha}<\eps$ whose solution blows up in finite time.

\begin{theorem}[Strong instability of normalized ground states]\label{thm:instability}
Let $N\ge3$, $a>\aH$ and $0<b<2$, assume \eqref{eq:range} and Assumption
\ref{ass:cauchy}, and let $u_c$ be a minimizer of $m_a(c)$. Then
$u_c\in\Sigma$, and the standing wave $e^{i\omega_ct}u_c$ is strongly unstable.
Moreover the destabilizing data can be taken on the same mass sphere: for every
$\eps>0$ there is $v_0\in S(c)\cap\Sigma$ with $\|v_0-u_c\|_{\Ha}<\eps$,
$\Ea(v_0)<m_a(c)$ and $\Qa(v_0)<0$, whose solution blows up in finite time.
\end{theorem}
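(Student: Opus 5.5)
We prove Theorem~\ref{thm:instability} by perturbing the minimizer $u_c$ along the $L^2$-preserving dilations $u\mapsto u^t$. The Pohozaev functional becomes negative along this curve and the energy drops below $m_a(c)$, so Theorem~\ref{thm:dichotomy}(ii) applies.

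\textbf{Step 1: finite variance.} We first show $u_c\in\Sigma$. By Theorem~\ref{thm:level}(iii), $u_c$ is a ground state of \eqref{eq:stationary} with $\omega_c>0$, so we need exponential (or sufficient polynomial) decay of $u_c$. We would use the standard argument for weak solutions of $\Lc u+\omega u=|x|^{-b}|u|^\alpha u$. Away from the origin the potential $a|x|^{-2}$ and the weight are bounded and decay. Test the equation against $e^{\eps\langle x\rangle}$-type cutoffs $\chi_R(x)\,\theta_\delta(x)\,u$, where $\theta_\delta=e^{2\eps|x|/(1+\delta|x|)}$. For $|x|\ge R$ large, the nonlinear term and $a|x|^{-2}$ are absorbed by $\frac{\omega_c}{2}|u|^2$, since $|x|^{-b}|u|^\alpha\to0$ in a suitable averaged sense: $u\in H^1$ and the weight tends to $0$. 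Letting $\delta\to0$ gives $e^{\eps|x|}u_c\in L^2$, hence $|x|u_c\in L^2$. This is the main technical obstacle, because of the singular potential and the Friedrichs extension. Only the region near infinity matters, however, and there the operator is a bounded perturbation of $-\Delta$. A simpler alternative we would try first is a weighted energy identity with the weight $|x|^2\chi(x/R)$, which yields $\int|x|^2|u_c|^2\le C$ uniformly in $R$ directly.

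\textbf{Step 2: the destabilizing curve.} Set $v_0:=u_c^{t}$ for $t>1$ close to $1$. Then $\|v_0\|_{L^2}=c$, and $v_0\in\Sigma$ since $\int|x|^2|u_c^t|^2=t^{-2}\int|x|^2|u_c|^2$. Continuity of $t\mapsto u^t$ in $\Ha$ gives $\|v_0-u_c\|_{\Ha}<\eps$ for $t$ close to $1$. By the scaling relations \eqref{eq:scalingAB}, the fibering map is
$$h(t)=\Ea(u_c^t)=\tfrac{t^2}{2}A-\tfrac{t^\beta}{\alpha+2}B,\qquad A=\|u_c\|_{\dHa}^2,\ B=\int|x|^{-b}|u_c|^{\alpha+2}\dd.$$
Since $\beta>2$ and $h'(1)=\Qa(u_c)=0$, the point $t=1$ is the unique critical point and a strict global maximum (Lemma~\ref{lem:fibering}). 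Hence $\Ea(v_0)=h(t)<h(1)=m_a(c)$ for $t\ne1$. Moreover $\Qa(u_c^t)=t\,h'(t)=t^2A-\frac{\beta}{\alpha+2}t^\beta B$, which is negative for $t>1$ because $A=\frac{\beta}{\alpha+2}B$.

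\textbf{Step 3: conclusion.} The datum $v_0$ satisfies \eqref{eq:below} with $\Qa(v_0)<0$ and $v_0\in\Sigma$. By Theorem~\ref{thm:dichotomy}(ii), under Assumption~\ref{ass:cauchy}, the corresponding solution blows up in finite time. Since $\eps$ was arbitrary, the standing wave $e^{i\omega_ct}u_c$ is strongly unstable, and the destabilizing data lie on $S(c)\cap\Sigma$ as claimed.
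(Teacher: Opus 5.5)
Your proposal is correct and follows the paper's proof. The finite variance of $u_c$ comes from an Agmon-type exponential estimate on the weak formulation (Proposition~\ref{prop:decay}), and the destabilizing data are the dilates $u_c^{t}$ with $t\downarrow1$, fed into Theorem~\ref{thm:dichotomy}(ii).

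One point in Step~1 needs sharpening. On $\{|x|\ge R\}$ the nonlinear term is absorbed not by the mass term but by the full $H^1$ norm of the weighted function, which the left-hand side controls via \eqref{eq:equivnorm}. The absorption goes through H\"older and Sobolev, $\int_{|x|\ge R}|x|^{-b}|u|^{\alpha}|\theta u|^{2}\,dx\le C\|u\|_{L^{\alpha+2}(|x|\ge R)}^{\alpha}\|\theta u\|_{H^1}^{2}$, exactly as in the paper.
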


The closed formula \eqref{eq:levelformula} also shows how the threshold depends on
the coupling constant.

\begin{proposition}[Monotonicity in the coupling]\label{prop:monotone}
Let $N\ge3$, $0<b<2$ and $0<\alpha<\frac{4-2b}{N-2}$. Then the map $a\mapsto\GN$
is strictly decreasing on $\big(-\frac{(N-2)^{2}}{4},\infty\big)$. If moreover
\eqref{eq:range} holds, then, for each fixed $c>0$, both $a\mapsto m_a(c)$ and
$a\mapsto\omega_c$ are strictly increasing.
\end{proposition}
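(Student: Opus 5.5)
Strict monotonicity of $a\mapsto\GN$ comes from attainment of the constant (for $b>0$, Proposition~\ref{prop:GN} and Remark~\ref{rmk:attained}) together with the pointwise monotonicity of the quotient
\[
J_a(u):=\frac{\int|u|^{\alpha+2}|x|^{-b}\dd}{\|u\|_{\dHa}^{\beta}\,\|u\|_{L^2}^{\sigma_c}},
\]
whose supremum over $\Ha\setminus\{0\}$ is $\GN$. Since $\Ha=H^1$ as sets for every $a>\aH$, all these quotients live on the same space.

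Fix $\aH<a_1<a_2$ and let $W$ be an optimizer of $\mathcal{C}_{a_1}$, which exists because $b>0$. For every $u\ne0$,
\[
\|u\|_{\dot H^1_{a_2}}^2=\|u\|_{\dot H^1_{a_1}}^2+(a_2-a_1)\int|u|^2|x|^{-2}\dd>\|u\|_{\dot H^1_{a_1}}^2,
\]
and the inequality is strict because $\int|u|^2|x|^{-2}\dd>0$. Since $\beta>0$ (indeed $\beta=\frac{N\alpha+2b}{2}>0$ for all $\alpha>0$), it follows that $J_{a_2}(u)<J_{a_1}(u)\le\mathcal{C}_{a_1}$ for every $u$, so $\mathcal{C}_{a_2}\le\mathcal{C}_{a_1}$.

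This pointwise bound is not yet strict for the supremum. To get strictness, take an optimizer $W_2$ of $\mathcal{C}_{a_2}$, which again exists since $b>0$ and $a_2>\aH$. Then
\[
\mathcal{C}_{a_2}=J_{a_2}(W_2)<J_{a_1}(W_2)\le\mathcal{C}_{a_1}.
\]
The key point is to use the optimizer at the \emph{larger} coupling, and attainment throughout the whole range $a>\aH$ (and for all $0<\alpha<\frac{4-2b}{N-2}$, where the Gagliardo--Nirenberg inequality still holds with $\sigma_c>0$) is exactly what is needed. I will check that the attainment result cited in Proposition~\ref{prop:GN} covers the mass-subcritical range too. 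If it does not, I will note that the compactness argument of Lemma~\ref{lem:weighted}(b) is insensitive to whether $\beta>2$ or $\beta<2$, since it relies only on the decay of $|x|^{-b}$ and $\sigma_c>0$. This is the main point of care.

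Under \eqref{eq:range}, the remaining claims follow directly from the closed formulas of Theorem~\ref{thm:level}. By \eqref{eq:levelformula},
\[
m_a(c)=K_{\alpha,N,b}\,\GN^{-\frac{4}{N\alpha+2b-4}}\,c^{-\gamma},
\]
where $K_{\alpha,N,b}>0$ and the exponent $-\frac{4}{N\alpha+2b-4}$ is negative because $\beta>2$. A strictly decreasing positive $\GN$ therefore gives a strictly increasing $a\mapsto m_a(c)$. Likewise, \eqref{eq:omegac} expresses $\omega_c$ as $\Lambda_{\alpha,N,b}>0$ (positive since $\sigma_c>0$) times the same negative power of $\GN$ times a factor depending only on $c$. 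Hence $a\mapsto\omega_c$ is strictly increasing as well.
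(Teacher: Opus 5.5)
Your proof is correct and follows the paper's argument: you use the strict pointwise inequality $J_{a_2}(u)<J_{a_1}(u)$ at an optimizer for the larger coupling $a_2$, and then read off the monotonicity of $m_a(c)$ and $\omega_c$ from the closed formulas, where $\mathcal{C}_a$ appears with a negative exponent. The preliminary optimizer of $\mathcal{C}_{a_1}$ is never used, and your concern about the mass-subcritical range does not arise, since Proposition~\ref{prop:GN} is already stated for all $0<\alpha<\frac{4-2b}{N-2}$.
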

\begin{remark}\rm \label{rmk:coupling}
Taking $a=0$ as a reference, Proposition \ref{prop:monotone} gives
$m_a(c)>m_0(c)$ for $a>0$ and $m_a(c)<m_0(c)$ for $\aH<a<0$, where $m_0(c)$ is
the normalized level of the inhomogeneous equation without potential. Thus, for
$b>0$, a repulsive potential strictly raises the threshold, and an attractive one
strictly lowers it. For $b=0$, the first assertion fails: by Remark
\ref{rmk:limiting}, $m_a(c)=m_0(c)$ for every $a\ge0$. The strict monotonicity
therefore rests on the attainment of $\GN$, which is where $b>0$ enters the proof
of Proposition \ref{prop:monotone}. Whether the set of data satisfying
\eqref{eq:below} and $\Qa(u_0)>0$ is monotone in $a$ is a different question
(Remark \ref{rmk:monotoneset}).
\end{remark}
One might expect the threshold to collapse when the potential approaches the
critical attractive strength $\aH$, where the form \eqref{eq:dHa} is no longer
comparable to $\|\nabla u\|_{L^2}^2$. This is not the case. Using a Hardy
inequality with a fractional remainder due to Frank \cite{Frank2009}, we show in
Proposition \ref{prop:hardyendpoint} that $\GN$ stays bounded as $a\downarrow\aH$.
Hence $m_a(c)$ decreases to a strictly positive limit.

\begin{remark}\rm\label{rmk:masscritical}
The exponents \eqref{eq:betasig} degenerate as $\alpha\downarrow\frac{4-2b}{N}$:
then $\beta\downarrow2$ and $\gamma\uparrow\infty$. In the mass-critical case,
the threshold is carried by the mass alone. We record the corresponding statement
in Proposition \ref{prop:masscritical}. It serves as a consistency check on
\eqref{eq:levelformula}, and it isolates the mechanism: for
$\alpha=\frac{4-2b}{N}$, the fibering map is a multiple of $t^2$, whose sign
changes at the critical mass $c^{*}=\big(\frac{\alpha+2}{2\GN}\big)^{1/\alpha}$.
\end{remark}

\begin{remark}\rm \label{rmk:general}
The variational results, namely Theorems \ref{thm:level} and \ref{thm:equiv} and
Propositions \ref{prop:monotone} and \ref{prop:hardyendpoint}, hold in the whole
range \eqref{eq:range} and in every dimension $N\ge3$. Their proofs are
self-contained; for the attainment of $\GN$, which is due to Campos and Guzm\'an
\cite[Lemma~3.2]{CamposGuzman2021}, we include a short proof (Proposition
\ref{prop:GN}). The dynamical results are stated under Assumption
\ref{ass:cauchy}. By Proposition \ref{prop:lwp}, this assumption holds for all
$N\ge3$, $0<b<2$, $a>\aH$ and $\alpha$ in \eqref{eq:range}. Only the scattering
statement in Theorem \ref{thm:dichotomy}(i) is restricted to \eqref{eq:dynrange},
because it relies on the scattering criterion of \cite{HLW2024}.

The Strichartz-space theory summarized in Table~\ref{tab_LWP} enters only through
that criterion. The second row of Table~\ref{tab_LWP} does not contain the
three-dimensional cubic model when $b>0$, since for $N=3$ it requires
$\alpha\le2-2b$. For the model \eqref{eq:dynrange}, the third row gives
$a>-\frac14+\frac{b^{2}}{9}$, which is \cite[Corollary~1.6(ii)]{CamposGuzman2021}.
The scattering theorem of \cite{HLW2024} is stated for every $a>-\frac14$, and we
use it in that form.
\end{remark}

\subsection{Main novelties and relation to previous work}\label{ssec:novelties}

We summarize what is new in this paper and how it compares with the two works
closest to it, the paper of Hajaiej, Luo and Wang \cite{HLW2024} and that of
Boulaaras, Ghanmi and the second author \cite{BGS2023}.

\smallskip\noindent\textbf{(N1) An explicit formula for the normalized level.}
Theorem \ref{thm:level}(i) gives $m_a(c)$ in closed form, in every dimension
$N\ge3$, in the whole range \eqref{eq:range} and for every $a>\aH$. In
\cite{HLW2024}, the level is shown to be positive and attained for the
three-dimensional cubic model, but it is not computed. The paper \cite{BGS2023}
does not consider the problem with prescribed mass. The formula reduces the
constrained minimax problem to the sharp constant of \eqref{eq:GNintro}, and it
shows that $c\mapsto m_a(c)$ is a pure power. The attainment of $\GN$ and the
existence of a ground state of \eqref{eq:ellipticQ} are due to Campos and
Guzm\'an \cite[Lemma~3.2 and Proposition~1.1]{CamposGuzman2021}. Proposition
\ref{prop:GN} gives a short self-contained proof of the attainment of $\GN$, in
the normalization used here, for all $N\ge3$, $0<b<2$, $a>\aH$ and
$0<\alpha<\frac{4-2b}{N-2}$.

\smallskip\noindent\textbf{(N2) Normalized minimizers are exactly the ground
states.} Theorem \ref{thm:level}(ii)--(iii) identifies the minimizers of $m_a(c)$
with the Gagliardo--Nirenberg optimizers lying on $V(c)$. It proves that they are
ground states of \eqref{eq:stationary} at the explicit frequency $\omega_c$, and
that every ground state arises in this way. This answers the question left open
in \cite[Remark~1.7]{HLW2024}. A by-product is that the numbers
$\Mass(Q)^{\sigma}\Ea(Q)$ and $\|Q\|_{L^2}^{\sigma}\|Q\|_{\dHa}$ are the same for
every ground state $Q$ of \eqref{eq:ellipticQ}. Campos and Guzm\'an
\cite[Proposition~1.1]{CamposGuzman2021} state that all solutions of
\eqref{eq:ellipticQ} have the same mass, $\dHa$ norm and energy. Their argument,
based on the Pohozaev identities and on equality in \eqref{eq:GNintro}, applies to
optimizers. Theorem \ref{thm:level}(iii) shows that every ground state, in the
sense of least action, is such an optimizer. Hence the thresholds used in
\cite{CamposGuzman2021,HLW2024,BGS2023} are well defined without any uniqueness
statement. To our knowledge, uniqueness is open when $a\ne0$ and $b\ne0$. In
\cite{BGS2023}, the threshold is written in terms of one particular optimizer
$\varphi$, and the relation between optimizers and solutions of least action is
not addressed.

\smallskip\noindent\textbf{(N3) The normalized level as a sharp dynamical
threshold.} Theorem \ref{thm:dichotomy} describes the dynamics below the ground
state by a single number depending only on the mass, $m_a(c)$, and by the sign of
the functional $\Qa$ that defines $V(c)$. The alternative is exhaustive, because
$\Qa$ cannot vanish below the level. The level is sharp, since global
non-scattering standing waves exist at the level. By Theorem \ref{thm:equiv},
below the level, these conditions are equivalent to the scale-invariant
conditions of Holmer--Roudenko type. In this form, and for data of positive
energy, global existence and finite-time blow-up for data of finite variance or
radial data were proved by Campos and Guzm\'an
\cite[Theorem~1.3]{CamposGuzman2021}. Criteria for global
existence and blow-up of general solutions, which are not necessarily radial or of
finite variance, were obtained by An, Jang and Kim \cite{AJK2023}. On the blow-up
side, Theorem \ref{thm:dichotomy} adds the formulation through $m_a(c)$, the
virial bound \eqref{eq:blowup}, which is uniform in time, and an explicit upper
bound on the blow-up times (Remark \ref{rmk:comparison}). Both bounds are obtained
pointwise in time from \eqref{eq:keyid}, without any continuity argument. Theorem
\ref{thm:equiv} also identifies our conditions with the conserved conditions of
\cite{BGS2023}, with $\varphi$ in place of $Q$. This substitution is legitimate.
Indeed, the function $\varphi$ of \cite{BGS2023} is an optimizer of
\eqref{eq:GNintro} with $\Qa(\varphi)=0$. Hence it is a minimizer of
$m_a(\|\varphi\|_{L^2})$, and a ground state by Theorem
\ref{thm:level}(ii)--(iii). The paper \cite{BGS2023} contains no blow-up
statement.

\smallskip\noindent\textbf{(N4) Instability of normalized ground states.}
Theorem \ref{thm:instability} proves that every normalized ground state is
strongly unstable, with destabilizing data on the same mass sphere. Proposition
\ref{prop:decay} gives the exponential decay needed for this, without any
regularity theory at the singularity. These questions are not considered in
\cite{BGS2023}, nor, to our knowledge, in \cite{HLW2024}.

\smallskip\noindent\textbf{(N5) Dependence of the threshold on the parameters.}
The closed formula gives the strict monotonicity of $m_a(c)$ and $\omega_c$ in the
coupling constant (Proposition \ref{prop:monotone}). It also gives a uniform
positive limit of $m_a(c)$ at the critical Hardy coupling $a\downarrow\aH$
(Proposition \ref{prop:hardyendpoint}), and it describes the mass-critical
borderline (Proposition \ref{prop:masscritical}).

\smallskip
\emph{Comparison with \cite{BGS2023}.} The two works are complementary. As far as
scattering is concerned, \cite{BGS2023} is more general than Theorem
\ref{thm:dichotomy}(i) with respect to $(\alpha,b)$. It covers every
intercritical power and every $0<b<2$, with a proof of its own. The scattering
statement of Theorem \ref{thm:dichotomy}(i) is not proved in the present paper. It
is taken from \cite{HLW2024}, and it is therefore restricted to $\alpha=2$ and
$0<b<1$. In \cite[Remark~1.2(i)]{HLW2024}, it is stated that the same arguments
apply to the nonlinearity $|x|^{-b}|u|^{\alpha}u$ with general $\alpha$ in
dimension three, without a detailed proof. On the other hand, through
\cite{HLW2024}, Theorem \ref{thm:dichotomy}(i) covers the attractive range
$-\frac14<a<0$. This range is excluded in \cite{BGS2023}, because the unweighted
dispersive estimate \eqref{eq:disp} is not available for $a<0$. The results listed in
(N1)--(N5) have no counterpart in \cite{BGS2023}, which quotes its variational
input from \cite{CamposGuzman2021} and treats neither blow-up nor instability. In
the other direction, Theorem \ref{thm:equiv} allows one to read the scattering
theorem \cite[Theorem~2.2]{BGS2023} as a statement about $m_a(c)$. For $N=3$,
$a>0$, $0<b<2$ and every $\alpha$ in \eqref{eq:range}, it asserts that data with
$\Ea(u_0)<m_a(c)$ and $\Qa(u_0)>0$ give global solutions that scatter. Together
with Theorem \ref{thm:dichotomy}(ii), this yields the full dichotomy for these
parameters. We do not restate it as a theorem, since its scattering part rests
entirely on \cite{BGS2023}. Finally, the formulation of \cite{BGS2023} through the
potential energy has a technical advantage, which we use in Section
\ref{sec:dynamics} (see the discussion after Proposition \ref{prop:scatcrit}): it
is inherited by spatial truncations.

\subsection{Strategy and organization}\label{ssec:strategy}

The proof of Theorem \ref{thm:level} rests on one identity. Along the dilations
$u^{t}(x)=t^{N/2}u(tx)$, which preserve the mass, the energy reads
$\Ea(u^{t})=\frac{t^{2}}{2}A-\frac{t^{\beta}}{\alpha+2}B$ with
$A=\|u\|_{\dHa}^{2}$ and $B=\int_{\R^N}\frac{|u|^{\alpha+2}}{|x|^b}\,dx$.
Eliminating $B$, we get
\begin{equation}\label{eq:keyid}
\Ea(u^{t})=\frac{\beta-2}{2\beta}\,t^{2}A+\frac1\beta\,\Qa(u^{t}),\qquad t>0 .
\end{equation}
We evaluate \eqref{eq:keyid} at the maximum point of $t\mapsto\Ea(u^t)$ and
express $B$ through the Gagliardo--Nirenberg quotient. Then the variable $A$ drops
out, and only the quotient and the mass remain. This produces
\eqref{eq:levelformula} and the characterization of minimizers. The ground-state
property is then obtained on the Nehari manifold, by an elementary optimization
in two variables that does not require the Pohozaev identity.

For Theorem \ref{thm:dichotomy}, the key observation is that the functional $\Qa$
defining $V(c)$ is one eighth of the second derivative of the variance,
$\frac{d^{2}}{dt^{2}}\int_{\R^N}|x|^{2}|u(t,x)|^{2}\,dx=8\Qa(u(t))$. The inverse-square potential
enters the virial identity with the same coefficient as the kinetic term,
precisely because it is scale critical. Identity \eqref{eq:keyid} then gives the
bound $\Qa(u)<\beta(\Ea(u)-m_a(c))$ whenever $\Qa(u)<0$ (Lemma
\ref{lem:coercivity}). This yields \eqref{eq:blowup} without any continuity or
bootstrap argument. In the opposite regime, it gives a uniform coercivity
estimate, which is what the scattering argument needs.

Section \ref{sec:prelim} collects notation and tools: the Sobolev spaces adapted
to $\Lc$, Strichartz estimates, a compactness lemma for the weighted nonlinearity,
the sharp Gagliardo--Nirenberg inequality with the existence of optimizers, the
Cauchy problem, the persistence of finite variance and the virial identity. Section
\ref{sec:variational} contains the variational analysis and the proofs of Theorems
\ref{thm:level} and \ref{thm:equiv} and of Proposition \ref{prop:monotone}.
Section \ref{sec:dynamics} proves Theorem \ref{thm:dichotomy}. Section
\ref{sec:instability} establishes the exponential decay of solutions of the
stationary problem and proves Theorem \ref{thm:instability}. Section
\ref{sec:remarks} discusses the mass-critical case and the behaviour of the
threshold at the Hardy endpoint. It ends with open problems, where we keep a
clear distinction between what is proved and what is only expected.

%%%%%%%%%%%%%%%%%%%%%%%%%%%%%%%%%%%%%%%%%%%%%%%%%%%%%%%%%%%%%%%%%%%%%%%%%%%%%%
\section{Preliminaries}\label{sec:prelim}
%%%%%%%%%%%%%%%%%%%%%%%%%%%%%%%%%%%%%%%%%%%%%%%%%%%%%%%%%%%%%%%%%%%%%%%%%%%%%%

Integrals are taken over $\R^{N}$ unless otherwise indicated. We write
$\|\cdot\|_{r}$ for the norm of $L^{r}(\R^{N})$. The notation $X\lesssim Y$ means
$X\le CY$ for a constant $C$ which may change from line to line and depends only
on the fixed parameters $N,a,b,\alpha$. \\
For $u\in\Ha(\R^N)$, we introduce the functionals
\begin{equation}\label{eq:ABm}
A(u):=\|u\|_{\dHa}^{2},\qquad
B(u):=\int_{\R^N}\frac{|u|^{\alpha+2}}{|x|^{b}}\dd,\qquad
\Mass(u):=\|u\|_{2}^{2}.
\end{equation}
Then, with $\beta$ defined in \eqref{eq:betasig}, we obtain the following identities
\begin{equation}\label{eq:EQAB}
\Ea(u)=\frac{A(u)}{2}-\frac{B(u)}{\alpha+2},\qquad
\Qa(u)=A(u)-\frac{\beta}{\alpha+2}B(u),\qquad
\Sw(u)=\Ea(u)+\frac{\omega}{2}\Mass(u).
\end{equation}
We also write $\langle u,h\rangle_a:=\int_{\R^N}\big(\nabla u\cdot\nabla\bar h+a|x|^{-2}u\bar h\big)\dd$
for the sesquilinear form associated with $A$. For $t>0$, $u^{t}(x):=t^{N/2}u(tx)$
denotes the $L^{2}$-preserving dilation, and we shall constantly use the scaling
relations
\begin{equation}\label{eq:scalingAB}
\Mass(u^{t})=\Mass(u),\qquad A(u^{t})=t^{2}A(u),\qquad B(u^{t})=t^{\beta}B(u).
\end{equation}
The last one follows from the change of variables $y=tx$ and the homogeneity of
$|x|^{-b}$, since $\frac{N(\alpha+2)}{2}+b-N=\frac{N\alpha+2b}{2}=\beta$.

\begin{definition}\label{def:weak}
Let $\omega\in\R$. A function $v\in\Ha(\R^N)$ is a (weak) solution of
\eqref{eq:stationary} if
\[
\langle v,h\rangle_a+\omega\int v\bar h\dd=\int|x|^{-b}|v|^{\alpha}v\,\bar h\dd
\qquad\text{for every }h\in H^1(\R^N).
\]
All integrals are finite by \eqref{eq:hardy} and Lemma \ref{lem:weighted} below.
\end{definition}

We first check the identity \eqref{eq:sigma}, which will be used repeatedly to
pass between the mass-constrained and the scale-invariant formulations.

\begin{lemma}\label{lem:sigma}
Let $N\ge3$, $0<b<2$ and $\frac{4-2b}{N}<\alpha<\frac{4-2b}{N-2}$, and let
$s_c,\beta,\sigma_c,\gamma$ be as in \eqref{eq:sc} and \eqref{eq:betasig}. Then
$\beta>2$, $\sigma_c>0$, $\beta+\sigma_c=\alpha+2$, $0<s_c<1$, and
\begin{equation}\label{eq:sigmaid}
\frac\gamma2=\frac{\sigma_c}{\beta-2}=\frac{1-s_c}{s_c},
\qquad
\gamma+2=\frac{2\alpha}{\beta-2}=\frac{4\alpha}{N\alpha+2b-4}.
\end{equation}
\end{lemma}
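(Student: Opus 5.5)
The lemma is pure algebra, so the plan is to express every quantity through $\alpha$, $N$, $b$ and verify the identities one at a time.

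First, positivity. By definition, $\beta-2=\frac{N\alpha+2b-4}{2}$. The lower bound $\alpha>\frac{4-2b}{N}$ gives $N\alpha>4-2b$, hence $\beta>2$. Likewise $\sigma_c=\frac{4-2b-(N-2)\alpha}{2}$, and the upper bound $\alpha<\frac{4-2b}{N-2}$ (using $N\ge3$, so $N-2>0$) gives $\sigma_c>0$.

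Second, the sum and the range of $s_c$. Adding the two definitions,
\[
\beta+\sigma_c=\frac{N\alpha+2b+4-2b-N\alpha+2\alpha}{2}=\alpha+2 .
\]
For $s_c=\frac N2-\frac{2-b}{\alpha}$, multiplying by $\alpha>0$ shows that $s_c>0$ is equivalent to $N\alpha>4-2b$. Similarly, $s_c<1$ is equivalent to $(N-2)\alpha<4-2b$, which is exactly the assumed range.

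Third, the main identity, which is the only place requiring care. I would compute
\[
s_c=\frac{N\alpha-4+2b}{2\alpha}=\frac{\beta-2}{\alpha},\qquad
1-s_c=\frac{2\alpha-N\alpha+4-2b}{2\alpha}=\frac{\sigma_c}{\alpha}.
\]
Dividing gives $\frac{1-s_c}{s_c}=\frac{\sigma_c}{\beta-2}$, which equals $\gamma/2$ by \eqref{eq:betasig}.

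Finally, by the sum identity,
\[
\gamma+2=\frac{2\sigma_c+2(\beta-2)}{\beta-2}=\frac{2(\alpha+2-2)}{\beta-2}=\frac{2\alpha}{\beta-2}=\frac{4\alpha}{N\alpha+2b-4}.
\]

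I expect no real obstacle here. The only point to watch is keeping the factors of $2$ consistent between $\beta$, $\sigma_c$ and $s_c$. Writing $s_c=(\beta-2)/\alpha$ and $1-s_c=\sigma_c/\alpha$ makes every step a one-line check.
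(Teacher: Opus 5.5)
Your proof is correct and follows essentially the same algebraic route as the paper: positivity from the two bounds on $\alpha$, direct addition for $\beta+\sigma_c=\alpha+2$, and the same computation of $\gamma+2$ via that sum. Writing $s_c=\frac{\beta-2}{\alpha}$ and $1-s_c=\frac{\sigma_c}{\alpha}$ is just a cleaner packaging of the paper's step of multiplying numerator and denominator of $\frac{1-s_c}{s_c}$ by $2\alpha$.
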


\begin{proof}
The inequality $\alpha>\frac{4-2b}{N}$ is equivalent to $N\alpha+2b>4$, that is,
to $\beta>2$; the inequality $\alpha<\frac{4-2b}{N-2}$ is equivalent to
$(N-2)\alpha<4-2b$, that is, to $\sigma_c>0$. The same two inequalities say that
$s_c=\frac N2-\frac{2-b}{\alpha}$ lies in $(0,1)$, because $s_c>0$ reads
$N\alpha>2(2-b)$ and $s_c<1$ reads $(N-2)\alpha<2(2-b)$. Adding the definitions of
$\beta$ and $\sigma_c$ gives
$\beta+\sigma_c=\frac{N\alpha+2b+4-2b-(N-2)\alpha}{2}=\alpha+2$. For the first
identity in \eqref{eq:sigmaid} we multiply numerator and denominator of
$\frac{1-s_c}{s_c}$ by $2\alpha$ and obtain
\[
\frac{1-s_c}{s_c}=\frac{2\alpha-N\alpha+2(2-b)}{N\alpha-2(2-b)}
=\frac{4-2b-(N-2)\alpha}{N\alpha+2b-4}=\frac{\sigma_c}{\beta-2},
\]
which equals $\frac\gamma2$ by \eqref{eq:betasig}. Finally
$\gamma+2=\frac{2\sigma_c+2(\beta-2)}{\beta-2}=\frac{2(\alpha+2)-4}{\beta-2}
=\frac{2\alpha}{\beta-2}$.
\end{proof}

\subsection{\texorpdfstring{Function spaces adapted to $\Lc$}{Function spaces adapted to the operator}}\label{ssec:spaces}

For $1<r<\infty$ we denote by $\dot H^{1,r}_a$ and $H^{1,r}_a$ the homogeneous and
inhomogeneous Sobolev spaces associated with $\Lc$, with norms
$\|u\|_{\dot H^{1,r}_a}=\|\Lc^{1/2}u\|_{r}$ and
$\|u\|_{H^{1,r}_a}=\|(1+\Lc)^{1/2}u\|_{r}$. For $r=2$, we drop the second index,
in agreement with \eqref{eq:dHa}. The comparison with the classical spaces is
governed by
\begin{equation}\label{eq:rho}
\rho:=\frac{N-2}{2}-\sqrt{\frac{(N-2)^{2}}{4}+a}\;,
\end{equation}
which is negative for $a>0$ and lies in $\big(0,\frac{N-2}{2}\big)$ for
$\aH<a<0$. It describes the behaviour at the origin of the zero modes of $\Lc$.
The following result is due to Killip, Miao, Visan, Zhang and Zheng
\cite{KMVZZ2018}.

\begin{proposition}[Equivalence of Sobolev spaces, \cite{KMVZZ2018}]\label{prop:equivsobolev}
Let $N\ge3$, $a>-\frac{(N-2)^{2}}{4}$ and $0<s<2$.
\begin{enumerate}
    \item 
Let $1<r<\infty$ and $f\in C^{\infty}_{c}(\R^{N}\setminus\{0\})$. 
\begin{enumerate}
    \item 
If $\frac{s+\rho}{N}<\frac1r<\min\{1,\frac{N-\rho}{N}\}$, then
$\big\||\nabla|^{s}f\big\|_{r}\lesssim\big\|\Lc^{s/2}f\big\|_{r}$.

\item If
$\max\{\frac sN,\frac\rho N\}<\frac1r<\min\{1,\frac{N-\rho}{N}\}$, the reverse
inequality $\big\|\Lc^{s/2}f\big\|_{r}\lesssim\big\||\nabla|^{s}f\big\|_{r}$ holds.
\end{enumerate}
\item 
In the Hilbert case, we have, for every $u\in H^1(\R^N)$,
\begin{equation}\label{eq:equivnorm}
\lambda_a\,\|\nabla u\|_{2}^{2}\le\|u\|_{\dHa}^{2}
\le\Lambda_a\,\|\nabla u\|_{2}^{2},\end{equation}

where
\begin{align}
    \lambda_a:=\min\Big\{1,1+\frac{4a}{(N-2)^{2}}\Big\},\quad
\Lambda_a:=\max\Big\{1,1+\frac{4a}{(N-2)^{2}}\Big\}.
\end{align}
\end{enumerate}
\end{proposition}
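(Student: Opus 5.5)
Part (1) is the deep half, and I would not reprove it; I would take it from \cite{KMVZZ2018} and only recall the structure of their argument to check that the exponent ranges match. That argument has three ingredients. First come the two-sided heat-kernel bounds for $e^{-t\Lc}$ of Liskevich--Sobol and Milman--Semenov,
\[
e^{-t\Lc}(x,y)\approx \Big(1\vee\tfrac{\sqrt t}{|x|}\Big)^{\rho}\Big(1\vee\tfrac{\sqrt t}{|y|}\Big)^{\rho}t^{-N/2}e^{-c|x-y|^2/t},
\]
with $\rho$ as in \eqref{eq:rho}. Second, these bounds give boundedness of the Riesz-type operators $|\nabla|^{s}\Lc^{-s/2}$ and $\Lc^{s/2}|\nabla|^{-s}$ on $L^r$, via a square-function/Mikhlin multiplier argument for $\Lc$ combined with weighted Hardy inequalities. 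Third, one identifies the admissible ranges of $\frac1r$. The factor $(\sqrt t/|x|)^{\rho}$ makes $e^{-t\Lc}$ fail to be bounded on $L^r$ unless $\frac{\rho}{N}<\frac1r<\frac{N-\rho}{N}$ when $\rho>0$. The additional restriction $\frac{s+\rho}{N}<\frac1r$ in (a) comes from the Hardy-type estimate $\||x|^{-s}f\|_r\lesssim\|\Lc^{s/2}f\|_r$, which is needed to compare $|\nabla|^s$ with $\Lc^{s/2}$ near the origin. Since only the Hilbert case is used in the variational part, I would simply quote (1).

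Part (2) I would prove directly, and it only needs the sharp Hardy inequality \eqref{eq:hardy}. Set $H(u):=\int|u|^2|x|^{-2}\dd$, so that $\|u\|_{\dHa}^2=\|\nabla u\|_2^2+aH(u)$ and $0\le H(u)\le\frac{4}{(N-2)^2}\|\nabla u\|_2^2$. I would split into two cases according to the sign of $a$.

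If $a\ge0$, then $\|\nabla u\|_2^2\le\|u\|_{\dHa}^2\le\big(1+\frac{4a}{(N-2)^2}\big)\|\nabla u\|_2^2$. These are exactly the bounds with $\lambda_a=1$ and $\Lambda_a=1+\frac{4a}{(N-2)^2}$.

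If $\aH<a<0$, the inequalities swap. We have $\|u\|_{\dHa}^2\le\|\nabla u\|_2^2$, and $\|u\|_{\dHa}^2\ge\big(1-\frac{4|a|}{(N-2)^2}\big)\|\nabla u\|_2^2$. The last constant is positive precisely because $a>\aH$.

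The only point needing care is that $\|\cdot\|_{\dHa}$ is the form of the Friedrichs extension built on $C_c^\infty(\R^N\setminus\{0\})$. I therefore have to check that its form domain is all of $H^1$ and that the formula \eqref{eq:dHa} holds on it. Once the two-sided bound is known on $C_c^\infty(\R^N\setminus\{0\})$, the form norm is equivalent to the $H^1$ norm there. So the form domain is the $H^1$-closure of $C_c^\infty(\R^N\setminus\{0\})$, and this closure equals $H^1(\R^N)$ because a point has zero $H^1$-capacity when $N\ge3$. Concretely, the cutoffs $1-\chi(x/\eps)$ converge to $1$ in the needed sense, since $\|\nabla\chi(\cdot/\eps)\|_2^2\sim\eps^{N-2}\to0$. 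Both sides of \eqref{eq:equivnorm} are continuous in $H^1$, the middle term by Hardy, so the inequality passes to the limit. I expect this density/identification step to be the main (mild) obstacle, the rest being immediate from \eqref{eq:hardy}.
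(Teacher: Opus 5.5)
Your proposal is correct and follows the paper's route: part (1) is quoted from \cite{KMVZZ2018}, and part (2) is read off from the sharp Hardy inequality \eqref{eq:hardy} by splitting according to the sign of $a$. Your density argument identifying the Friedrichs form domain with $H^1(\R^N)$ justifies a point the paper only asserts (that $\Ha$ and $H^1$ coincide with equivalent norms), and it is a welcome addition.
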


The two-sided bound \eqref{eq:equivnorm} is an immediate consequence of
\eqref{eq:hardy}: if $a\ge0$, then
$\|\nabla u\|_{2}^{2}\le\|u\|_{\dHa}^{2}\le(1+\frac{4a}{(N-2)^{2}})\|\nabla u\|_{2}^{2}$,
and if $a<0$ the two inequalities are reversed. In particular, $\Ha(\R^{N})$ and
$H^{1}(\R^{N})$ coincide as sets, with equivalent norms, and $\lambda_a>0$
exactly when $a>\aH$. We use this without further comment. The $L^r$ statements
above are quoted from \cite{KMVZZ2018}. They enter only through the
Strichartz-space theory of \cite{CamposGuzman2021} recalled in
Table~\ref{tab_LWP}. The Hilbert-space bound \eqref{eq:equivnorm} is the only part
used in our own proofs.

\subsection{Dispersive and Strichartz estimates}\label{ssec:strichartz}

In three dimensions and for $a\ge0$, Fanelli, Felli, Fontelos and Primo
\cite{FFFP2013} proved the dispersive estimate
\begin{equation}\label{eq:disp}
\|e^{-it\Lc}f\|_{\infty}\lesssim|t|^{-3/2}\|f\|_{1}.
\end{equation}
For $-\frac14<a<0$, they obtained the weighted version
\begin{equation}\label{eq:dispweighted}
\big\|(1+|x|^{-\rho})^{-1}e^{-it\Lc}f\big\|_{\infty}
\lesssim\frac{1+|t|^{\rho}}{|t|^{3/2}}\big\|(1+|x|^{-\rho})f\big\|_{1},
\end{equation}
with $\rho$ as in \eqref{eq:rho}. Zheng \cite{Zheng2018} proved the analogues of
\eqref{eq:disp} and \eqref{eq:dispweighted} in every dimension $N\ge3$ for radial
data; see also \cite[Lemma~2.4]{CamposGuzman2021}. The loss in
\eqref{eq:dispweighted} is one of the technical difficulties handled in
\cite{HLW2024}. These dispersive estimates are recorded for context only, and
they are not used in our proofs.

We call a pair $(q,r)$ \emph{admissible} if $2\le q\le\infty$ and
$\frac2q+\frac Nr=\frac N2$. The following result is due to Burq, Planchon,
Stalker and Tahvildar-Zadeh \cite{BPST2003} for non-endpoint pairs. The endpoint
$q=2$ was treated by Bouclet and Mizutani \cite{BoucletMizutani2018}; see also
\cite[Lemma~2.5]{CamposGuzman2021}. Away from the endpoint, the inhomogeneous
estimate follows from the homogeneous one by the Christ--Kiselev lemma
\cite{ChristKiselev2001}. In our own arguments, Proposition \ref{prop:strichartz}
is used only once, with $F=0$ and the non-endpoint admissible pair
$(4,\frac{2N}{N-1})$, in the proof of the sharpness part of Theorem
\ref{thm:dichotomy}.

\begin{proposition}[Strichartz estimates, \cite{BPST2003,BoucletMizutani2018}]\label{prop:strichartz}
Let $N\ge3$ and $a>-\frac{(N-2)^{2}}{4}$. If $u$ solves
$(i\partial_t-\Lc)u=F$ on an interval $I\ni t_0$, then, for all admissible pairs
$(q,r)$ and $(\tilde q,\tilde r)$,
\begin{equation}\label{eq:strichartz}
\|u\|_{L^{q}_{t}L^{r}_{x}(I\times\R^{N})}
\lesssim\|u(t_0)\|_{2}+\|F\|_{L^{\tilde q'}_{t}L^{\tilde r'}_{x}(I\times\R^{N})} .
\end{equation}
\end{proposition}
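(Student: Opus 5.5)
The plan is to treat $a|x|^{-2}$ as a perturbation of the free Laplacian. It cannot be handled perturbatively in Strichartz norms, since it is scale critical. Instead I would use Kato's smoothing theory, following Burq, Planchon, Stalker and Tahvildar-Zadeh \cite{BPST2003}. Write $V=a|x|^{-2}=a\,|x|^{-1}\cdot|x|^{-1}$. By Duhamel,
\[
e^{-it\Lc}f=e^{it\Delta}f-i\int_0^t e^{i(t-s)\Delta}\,|x|^{-1}\big(a|x|^{-1}e^{-is\Lc}f\big)\,ds .
\]
The free part is controlled by the classical Strichartz estimates (Keel--Tao, endpoint included). The Duhamel term is reduced to two ingredients:
\begin{enumerate}[label=(\alph*)]
\item the \emph{dual free smoothing estimate} $\big\|\int e^{i(t-s)\Delta}|x|^{-1}G(s)\,ds\big\|_{L^q_tL^r_x}\lesssim\|G\|_{L^2_{t,x}}$, valid for every admissible pair $(q,r)$;
\item the \emph{Kato smoothing estimate for the perturbed flow}, $\big\||x|^{-1}e^{-it\Lc}f\big\|_{L^2_{t,x}}\lesssim\|f\|_{2}$.
\end{enumerate}
Ingredient (a) follows from the free weighted resolvent bound $\sup_{z\notin\R}\big\||x|^{-1}(-\Delta-z)^{-1}|x|^{-1}\big\|_{L^2\to L^2}<\infty$, combined with the homogeneous free Strichartz estimate through a $TT^*$ argument. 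For non-endpoint $q$, the time truncation $s<t$ is removed by the Christ--Kiselev lemma \cite{ChristKiselev2001}. Combining (a) and (b) gives the homogeneous estimate $\|e^{-it\Lc}f\|_{L^q_tL^r_x}\lesssim\|f\|_2$ for all non-endpoint admissible pairs.

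The main obstacle is (b), and in particular its uniformity down to the whole range $a>\aH$, including the attractive case. By Kato's theory, (b) is equivalent to the uniform weighted resolvent bound
\[
\sup_{z\in\C\setminus\R}\big\||x|^{-1}(\Lc-z)^{-1}|x|^{-1}\big\|_{L^2\to L^2}<\infty .
\]
I would prove this by a Morawetz/virial multiplier argument for the Helmholtz equation $(\Lc-z)w=|x|^{-1}g$, using the multiplier $\partial_r+\frac{N-1}{2r}$. The key feature is that the commutator of $a|x|^{-2}$ with the dilation generator has the same sign and homogeneity as that of $-\Delta$. Equivalently, one may use the Morawetz identity for the evolution, where the contribution of $a|x|^{-2}$ enters exactly like the angular and radial kinetic terms. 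The resulting positive quantity is
\[
\int\frac{|\nabla_{\!\tan}u|^2+a|u|^2/|x|^2}{|x|}\,dx .
\]
Its coercivity for $a>\aH$ requires a Hardy inequality on each spherical harmonic sector. I would therefore decompose into spherical harmonics. On the sector of angular momentum $\ell$, $\Lc$ acts as a Bessel operator of order $\nu_\ell=\sqrt{(\frac{N-2}{2}+\ell)^2+a}>0$. Uniform-in-$\ell$ Bessel and Hardy bounds then close the estimate, with constants depending only on $\nu_0=\sqrt{\frac{(N-2)^2}{4}+a}$. This is exactly where $a>\aH$ is used, and I expect this step to need the most care.

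The inhomogeneous estimate \eqref{eq:strichartz} with non-endpoint pairs follows by duality from the homogeneous one, via $TT^*$ and Christ--Kiselev, starting from $t_0$. For the endpoint $q=2$ (or $\tilde q=2$), Christ--Kiselev is unavailable. There I would invoke Bouclet--Mizutani \cite{BoucletMizutani2018}. Their argument replaces the truncation by a resolvent-based double Duhamel expansion, in which the endpoint inhomogeneous free estimate, the uniform resolvent bound above and its $L^2_t$ smoothing counterpart are combined. Since only the non-endpoint pair $(4,\frac{2N}{N-1})$ with $F=0$ is used later in the paper, the endpoint is not essential for the present applications.
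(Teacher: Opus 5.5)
The architecture you propose is the standard one behind the result the paper quotes without proof: Duhamel around $e^{it\Delta}$, free Strichartz plus the dual free smoothing estimate for (a), Kato smoothing of the perturbed flow for (b), Christ--Kiselev off the endpoint, and \cite{BoucletMizutani2018} at the endpoint.

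\textbf{The gap is in (b).} You correctly single out (b) as the crux, but the multiplier argument you describe cannot reach the range $a>\aH$. The multiplier $\partial_r+\frac{N-1}{2r}$ corresponds to the Morawetz weight $\phi=|x|$, not to the dilation weight $|x|^2$. For $(\Lc-z)w=F$ the bulk term it produces is
\[
\int\frac{|\nabla_{\mathrm{tan}}w|^{2}}{|x|}\dd+\Big(a+\frac{(N-1)(N-3)}{4}\Big)\int\frac{|w|^{2}}{|x|^{3}}\dd .
\]
For $N=3$ this is your expression, up to a formal point term at the origin.

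On the $\ell$-th spherical-harmonic sector this term equals exactly $(\nu_\ell^{2}-\frac14)\int|w_\ell|^{2}|x|^{-3}\dd$. So there is nothing left for a Hardy inequality to do. The term is positive only if $\nu_0>\frac12$, that is $a>\aH+\frac14$. For $N=3$ the argument therefore fails for every $-\frac14<a\le0$. For $\nu_0\le\frac12$ the integral even diverges at the origin for generic $w$, since $w_0\sim|x|^{-\frac{N-2}{2}+\nu_0}$.

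Two further points:
\begin{itemize}
\item The identity $i[\Lc,A]=2\Lc$ for the dilation generator is correct, but it is the virial identity. Its right-hand side is conserved and gives no integrability in time.
\item Even where the sign is right, this multiplier works at the $\dot H^{1/2}$ level. It controls $|x|^{-1/2}\nabla_{\mathrm{tan}}w$ and $|x|^{-3/2}w$. The right-hand side $\Ree\int|x|^{-1}g\,\overline{(\partial_rw+\frac{N-1}{2r}w)}\dd$ instead involves $|x|^{-1}\nabla w$ and $|x|^{-2}w$. So it does not by itself give the $L^2$-level bound (b).
\end{itemize}

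\textbf{What works in the full range} is an exact spectral computation on each sector, with no multiplier at all. Write $f=r^{-\frac{N-2}{2}}v(r)Y_\ell(\omega)$. The Friedrichs realization of $\Lc$ acts on $v\in L^2(r\,dr)$ as $-\partial_r^2-\frac1r\partial_r+\frac{\nu_\ell^2}{r^2}$. It is diagonalized by the Hankel transform with kernel $J_{\nu_\ell}(r\rho)$. Choosing the regular Bessel function is exactly the Friedrichs choice, which matters when $\nu_0<1$. Plancherel in $t$, after the substitution $\tau=\rho^2$, gives
\[
\big\||x|^{-1}e^{-it\Lc}f\big\|_{L^2_{t,x}}^{2}=\pi\Big(\int_0^\infty J_{\nu_\ell}(s)^{2}\,\frac{ds}{s}\Big)\|f\|_{2}^{2}=\frac{\pi}{2\nu_\ell}\,\|f\|_{2}^{2}.
\]
Since the weight is radial, the sectors are orthogonal. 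Hence (b) holds with the sharp constant $\frac{\pi}{2\nu_0}$, which equals $\frac{\pi}{N-2}$ for $a=0$. The hypothesis $a>\aH$ enters exactly as $\nu_0>0$, that is, as the integrability of $J_\nu(s)^2/s$ at $s=0$.

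A smaller point: Kato's criterion characterizes smoothing through $\Imm(\Lc-z)^{-1}$. The bound on the full resolvent is a priori stronger, and it is the one the endpoint argument needs. So ``equivalent'' should read ``implies''.
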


For $N=3$ and $a\ge0$, the estimate \eqref{eq:strichartz} also follows from
\eqref{eq:disp} by the abstract results of Keel and Tao \cite{kl} and of Foschi
\cite{foshi}.

\subsection{The weighted nonlinearity and the sharp Gagliardo--Nirenberg inequality}\label{ssec:GN}

The variational analysis rests on two properties of the functional $B$: it is
controlled by the $H^1$ norm, and it is weakly continuous. The weight $|x|^{-b}$
plays its role in the second property. For $b=0$, this property fails, since mass
can escape to infinity.

\begin{lemma}\label{lem:weighted}
Let $N\ge3$, $0<b<2$ and $0<\alpha<\frac{4-2b}{N-2}$.
\begin{enumerate}
\item[(a)] There is $C>0$ such that $B(u)\le C\|u\|_{H^1}^{\alpha+2}$ and
\[
\big|B(u)-B(v)\big|\le C\big(\|u\|_{H^1}+\|v\|_{H^1}\big)^{\alpha+1}\|u-v\|_{H^1}
\]
for all $u,v\in H^1(\R^N)$. Moreover, $B$ is of class $C^1$ on $H^1(\R^N)$, and
\[
B'(u)h=(\alpha+2)\Ree\int|x|^{-b}|u|^{\alpha}u\,\bar h\dd,
\qquad u,h\in H^1(\R^N).
\]
\item[(b)] If $u_n\rightharpoonup u$ weakly in $H^1(\R^N)$, then $B(u_n)\to B(u)$.
\end{enumerate}
\end{lemma}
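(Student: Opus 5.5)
The plan is to split $\R^N$ into a neighbourhood of the origin, an intermediate annulus, and the exterior of a large ball. The weight behaves differently on each region, and the estimates can be tuned region by region.

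\emph{Part (a).} For the boundedness of $B$, write $B(u)=\int_{|x|\le1}+\int_{|x|>1}$. On $\{|x|>1\}$ the weight is at most $1$. Since $2<\alpha+2<\frac{2N}{N-2}$ (because $\alpha<\frac{4-2b}{N-2}<\frac{4}{N-2}$), Sobolev gives $\int_{|x|>1}|u|^{\alpha+2}\le C\|u\|_{H^1}^{\alpha+2}$.

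On $\{|x|\le1\}$ I apply H\"older with $|x|^{-b}\in L^{p}(B_1)$, which requires $pb<N$, and $|u|^{\alpha+2}\in L^{p'}$, which requires $(\alpha+2)p'\le\frac{2N}{N-2}$. These two conditions are compatible exactly when $\frac bN+\frac{(\alpha+2)(N-2)}{2N}<1$, that is, $(N-2)\alpha<4-2b$. This is precisely the energy-subcritical hypothesis, so some $p$ works, with strict inequality $pb<N$.

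The Lipschitz-type bound follows from the pointwise inequality $\big||u|^{\alpha+2}-|v|^{\alpha+2}\big|\lesssim(|u|+|v|)^{\alpha+1}|u-v|$. I then run the same two-region H\"older argument with exponents $\frac{\alpha+1}{\alpha+2}$ and $\frac1{\alpha+2}$ on the $L^{(\alpha+2)p'}$ factors.

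For the $C^1$ property, I first compute the G\^ateaux derivative by dominated convergence, using the formula $\frac{d}{ds}|u+sh|^{\alpha+2}=(\alpha+2)|u+sh|^{\alpha}\Ree\big((u+sh)\bar h\big)$ and the same H\"older bounds for domination. Continuity of $u\mapsto B'(u)$ in operator norm then follows from the estimate $\big||u|^\alpha u-|v|^\alpha v\big|\lesssim(|u|+|v|)^\alpha|u-v|$ together with the weighted H\"older splitting.

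\emph{Part (b).} This is the main point, and the step I expect to be the real obstacle, since it is exactly where $b>0$ is used. Let $u_n\rightharpoonup u$ in $H^1$, so that $M:=\sup_n\|u_n\|_{H^1}<\infty$. Fix $0<\delta<R$ and split $\R^N$ into $\{|x|<\delta\}$, $\{\delta\le|x|\le R\}$ and $\{|x|>R\}$.

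\textbf{Near the origin.} The H\"older argument above with $pb<N$ strict gives
\[
\int_{|x|<\delta}|x|^{-b}|u_n|^{\alpha+2}\le\big\||x|^{-b}\big\|_{L^p(B_\delta)}CM^{\alpha+2},
\]
and the prefactor tends to $0$ as $\delta\to0$, uniformly in $n$.

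\textbf{At infinity.} Here $|x|^{-b}\le R^{-b}$, so
\[
\int_{|x|>R}|x|^{-b}|u_n|^{\alpha+2}\le R^{-b}CM^{\alpha+2},
\]
which tends to $0$ as $R\to\infty$ uniformly in $n$. This is the decay of the weight that prevents mass escaping to infinity, and it fails for $b=0$.

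\textbf{On the annulus.} The weight is bounded there, and Rellich--Kondrachov gives $u_n\to u$ strongly in $L^{\alpha+2}(\{\delta\le|x|\le R\})$, because $\alpha+2$ is below the Sobolev exponent. Since $\big|\|f\|^{\alpha+2}-\|g\|^{\alpha+2}\big|$ is controlled by $\|f-g\|$ times a bounded factor, the annulus integrals converge.

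The same tail bounds hold for $u$ itself. Letting $n\to\infty$ first and then $\delta\to0$, $R\to\infty$, I obtain $B(u_n)\to B(u)$.

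The only delicate bookkeeping is choosing the exponent $p$ with strict room near the origin, which the strict energy-subcritical inequality guarantees.
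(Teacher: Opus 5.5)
Your proposal is correct and follows essentially the paper's route: for (a), the H\"older--Sobolev split at $|x|=1$, where the paper packages the Lipschitz bound as H\"older in the measure $|x|^{-b}dx$, which is equivalent to your region-by-region argument; for (b), Rellich--Kondrachov on a bounded region combined with the tail bound $R^{-b}$, which is where $b>0$ enters. The only difference is cosmetic: you isolate a small ball $\{|x|<\delta\}$ and use Rellich in $L^{\alpha+2}$ on the annulus, whereas the paper treats all of $B_R$ at once, applying H\"older with $|x|^{-b}\in L^{r}(B_R)$ and Rellich in the strictly subcritical space $L^{p}(B_R)$, $p=(\alpha+2)r'$, to $w_n=u_n-u$, and then concluding with the difference estimate from (a).
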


\begin{proof}
Since $\alpha(N-2)<4-2b$, we have $(\alpha+2)\frac{N}{N-b}<\frac{2N}{N-2}$. Indeed,
this inequality is equivalent to $(\alpha+2)(N-2)<2(N-b)$, that is, to
$\alpha(N-2)<4-2b$. We may therefore fix $r\in(1,\frac Nb)$ so close to $\frac Nb$
that $p:=(\alpha+2)r'<\frac{2N}{N-2}$, where $r'$ is the conjugate exponent. Note
that $p>2$. Then $|x|^{-b}\in L^{r}(B_1)$, and H\"older's inequality followed by
the Sobolev embedding gives
\[
\int_{|x|<1}\frac{|w|^{\alpha+2}}{|x|^{b}}\dd\le\big\||x|^{-b}\big\|_{L^{r}(B_1)}
\|w\|_{L^{p}(B_1)}^{\alpha+2}\lesssim\|w\|_{H^1}^{\alpha+2},
\qquad
\int_{|x|\ge1}\frac{|w|^{\alpha+2}}{|x|^{b}}\dd\le\|w\|_{\alpha+2}^{\alpha+2}
\lesssim\|w\|_{H^1}^{\alpha+2},
\]
where we used $2<\alpha+2<\frac{2N}{N-2}$ in the second estimate. Thus
$\|w\|_{L^{\alpha+2}(|x|^{-b}dx)}\lesssim\|w\|_{H^1}$, which is the first claim in
(a). For the second, we use
$\big||u|^{\alpha+2}-|v|^{\alpha+2}\big|\le(\alpha+2)(|u|^{\alpha+1}+|v|^{\alpha+1})|u-v|$
and H\"older's inequality with exponents $\frac{\alpha+2}{\alpha+1}$ and $\alpha+2$
for the measure $|x|^{-b}dx$. We obtain
$|B(u)-B(v)|\le(\alpha+2)\big(B(u)^{\frac{\alpha+1}{\alpha+2}}+B(v)^{\frac{\alpha+1}{\alpha+2}}\big)
\|u-v\|_{L^{\alpha+2}(|x|^{-b}dx)}$, and the claim follows from the first bound.
The differentiability of $B$ is a routine consequence of the same estimates
applied to the map $z\mapsto|z|^{\alpha+2}$, whose derivative is locally
H\"older continuous; see \cite[Chapter~3]{Cazenave2003} or \cite{Farah2016}.

For (b), let $u_n\rightharpoonup u$ in $H^1$. Then
$M_0:=\sup_n\|u_n\|_{H^1}+\|u\|_{H^1}<\infty$. Put $w_n:=u_n-u$. For $R\ge1$,
applying the argument above on $B_R$ and on its complement, we get
\[
\int\frac{|w_n|^{\alpha+2}}{|x|^{b}}\dd
\le\big\||x|^{-b}\big\|_{L^{r}(B_R)}\|w_n\|_{L^{p}(B_R)}^{\alpha+2}
+R^{-b}\|w_n\|_{\alpha+2}^{\alpha+2}
\le C_R\|w_n\|_{L^{p}(B_R)}^{\alpha+2}+CR^{-b}M_0^{\alpha+2}.
\]
Given $\eps>0$ we choose $R$ with $CR^{-b}M_0^{\alpha+2}<\eps$. Since
$p<\frac{2N}{N-2}$, the Rellich--Kondrachov theorem gives $w_n\to0$ in
$L^{p}(B_R)$, so the first term tends to zero. Hence
$\|w_n\|_{L^{\alpha+2}(|x|^{-b}dx)}\to0$, and the difference estimate used in (a)
yields $B(u_n)\to B(u)$.
\end{proof}

We can now state the sharp inequality. The inequality and the existence of an
optimizer are due to Campos and Guzm\'an \cite[Lemma~3.2]{CamposGuzman2021}. For
$a=0$, see Farah \cite{Farah2016} and Guzm\'an \cite{Guzman2017}, and for
fractional operators, see Bhimani, Hajaiej, Haque and Luo \cite{BHHL2023}. We
include a short proof for completeness, since the normalization
\eqref{eq:GNquotient} and the invariances of $J$ are used throughout.

\begin{proposition}[Sharp Gagliardo--Nirenberg inequality]\label{prop:GN}
Let $N\ge 3$, $a>-\frac{(N-2)^2}{4}$, $0<b<2$ and $0<\alpha<\frac{4-2b}{N-2}$,
and let $\beta$ and $\sigma_c$ be defined as in \eqref{eq:betasig}. Define the
sharp Gagliardo--Nirenberg constant by
\begin{equation}\label{eq:GNquotient}
\GN:=\sup_{u\in\Ha(\mathbb{R}^N)\setminus\{0\}}J(u),
\qquad
J(u):=\frac{B(u)}
{A(u)^{\beta/2}\|u\|_{2}^{\sigma_c}},
\end{equation}
where $A(u)$ and $B(u)$ are given by \eqref{eq:ABm}. Then $0<\GN<\infty$, and the
supremum in \eqref{eq:GNquotient} is attained. Consequently, the sharp
Gagliardo--Nirenberg inequality
\begin{equation}\label{eq:GN}
B(u)\le \GN\,A(u)^{\beta/2}\|u\|_{2}^{\sigma_c},
\qquad u\in\Ha(\mathbb{R}^N),
\end{equation}
holds, and $\GN$ is the optimal constant.
\end{proposition}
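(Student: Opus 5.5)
The proof has three steps: finiteness of $\GN$, positivity, and attainment by a normalized maximizing sequence.

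\emph{Finiteness.} I would combine Lemma~\ref{lem:weighted}(a) with a scaling argument. By Lemma~\ref{lem:weighted}(a) and \eqref{eq:equivnorm},
\[
B(u)\lesssim\big(\|u\|_2^2+A(u)\big)^{\frac{\alpha+2}{2}}.
\]
The quotient $J$ is invariant under multiplication by constants. It is also invariant under the dilations $u^t$, by \eqref{eq:scalingAB} and because $A^{\beta/2}$ scales like $t^\beta$. Hence we may normalize $\|u\|_2=1$ and $A(u)=1$, which is possible for $u\neq0$. Then $B(u)\le C$, so $J(u)\le C$ and $\GN<\infty$.

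\emph{Positivity.} This is immediate: $J(u)>0$ for any $u\in C_c^\infty\setminus\{0\}$.

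\emph{Attainment.} Take a maximizing sequence $u_n$ and normalize, using the two invariances, so that $\|u_n\|_2=1$ and $A(u_n)=1$. By \eqref{eq:equivnorm} the sequence is bounded in $H^1$. Passing to a subsequence, $u_n\rightharpoonup u$ weakly in $H^1$. Lemma~\ref{lem:weighted}(b) gives
\[
B(u)=\lim_n B(u_n)=\lim_n J(u_n)=\GN>0,
\]
so $u\neq0$. By weak lower semicontinuity of the $L^2$ norm, $\|u\|_2\le1$.

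For the quadratic form $A$, I would use that $A(\cdot)^{1/2}$ is a norm on $H^1$ equivalent to the $\dot H^1$ seminorm. This holds because $a>\aH$ and \eqref{eq:equivnorm} bound $A$ above and below by $\|\nabla u\|_2^2$. In particular $A$ is a continuous convex quadratic form, hence weakly lower semicontinuous, so $A(u)\le1$.

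Consequently
\[
J(u)=\frac{B(u)}{A(u)^{\beta/2}\|u\|_2^{\sigma_c}}\ge\GN,
\]
since $\beta>0$, $\sigma_c>0$ and both denominator factors are at most $1$. Therefore $J(u)=\GN$ and $u$ is an optimizer. Inequality \eqref{eq:GN} is just the definition of the supremum, and attainment shows that $\GN$ is optimal.

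\emph{Main obstacle.} The one substantive point is that weak limits do not lose $B$. In the non-translation-invariant setting, mass escaping to infinity could otherwise make the limit vanish. This is exactly what Lemma~\ref{lem:weighted}(b) provides, via the decay of $|x|^{-b}$. The only other check is that $\sigma_c>0$ here; note that $\alpha<\frac{4-2b}{N-2}$ alone does not require $\beta>2$, and the argument does not need it.
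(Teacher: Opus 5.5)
Your proposal is correct and follows essentially the same route as the paper: finiteness from Lemma~\ref{lem:weighted}(a) combined with the scaling invariance of $J$, then attainment via a maximizing sequence normalized to $\|u_n\|_2=A(u_n)=1$, Lemma~\ref{lem:weighted}(b), and weak lower semicontinuity of $A$ and of the $L^2$ norm. The only cosmetic difference is in the finiteness step: you normalize $A(u)=\|u\|_2=1$ directly, whereas the paper balances $\|\nabla u\|_2$ against $\|u\|_2$ and then applies \eqref{eq:equivnorm}.
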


\begin{proof}
Note first that $\beta+\sigma_c=\alpha+2$, $\beta>0$ and $\sigma_c>0$ for every
$\alpha$ in the stated range. By Lemma \ref{lem:weighted}(a),
$B(w)\le C\big(\|\nabla w\|_2^{2}+\|w\|_2^{2}\big)^{\frac{\alpha+2}{2}}$ for all
$w\in H^1$. Given $u\ne0$, we apply this to $w=u^{t}$ and use
\eqref{eq:scalingAB} together with $\|\nabla u^t\|_2^2=t^2\|\nabla u\|_2^2$. We get
\[
t^{\beta}B(u)\le C\big(t^{2}\|\nabla u\|_2^{2}+\|u\|_2^{2}\big)^{\frac{\alpha+2}{2}}.
\]
The choice $t^{2}=\|u\|_2^{2}/\|\nabla u\|_2^{2}$ gives
$B(u)\le 2^{\frac{\alpha+2}{2}}C\,\|\nabla u\|_2^{\beta}\|u\|_2^{\sigma_c}$, and
\eqref{eq:equivnorm} turns this into \eqref{eq:GN} with
$\GN\le2^{\frac{\alpha+2}{2}}C\lambda_a^{-\beta/2}<\infty$. Positivity is obvious.

For the existence of an optimizer, we use the two-parameter invariance of $J$. For
$\theta>0$ and $t>0$, we have $J(\theta u^{t})=J(u)$, because the numerator and the
denominator of $J$ are both homogeneous of degree $\alpha+2$ in $u$, and because
$t^{\beta}B/(t^{2}A)^{\beta/2}=B/A^{\beta/2}$ by \eqref{eq:scalingAB}. Let $(u_n)$
be a maximizing sequence. Replacing $u_n$ by $\theta_nu_n^{t_n}$ with
$\theta_n=\|u_n\|_2^{-1}$ and $t_n=\big(\Mass(u_n)/A(u_n)\big)^{1/2}$, we may
assume $\|u_n\|_2=1$ and $A(u_n)=1$. Then $(u_n)$ is bounded in $H^1$, and after
extraction $u_n\rightharpoonup u$ weakly in $H^1$. By Lemma
\ref{lem:weighted}(b), we have $B(u)=\lim B(u_n)=\lim J(u_n)=\GN>0$, so $u\ne0$. The
functional $A$ is a continuous nonnegative quadratic form on $H^1$, hence convex
and weakly lower semicontinuous, so $A(u)\le1$; likewise $\|u\|_2\le1$.
Consequently $J(u)\ge B(u)=\GN$, and $u$ is an optimizer.
\end{proof}

Two features of $J$ appeared in this proof and will be used constantly: it is
homogeneous of degree zero, and it is invariant under the dilations
$u\mapsto u^{t}$. Together they say that the optimization problem
\eqref{eq:GNquotient} may be posed on any sphere $S(c)$.

\begin{lemma}\label{lem:GNrestrict}
For every $c>0$, we have
$$
\GN=\sup_{u\in S(c)}J(u).
$$
Moreover, a nonzero function $u\in\Ha(\mathbb{R}^N)$ is an optimizer for \eqref{eq:GN} if and only if $\theta u^{t}$ is an optimizer for every $\theta>0$ and $t>0$.
\end{lemma}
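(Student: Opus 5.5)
The plan rests on the two invariances of $J$ already isolated in the proof of Proposition \ref{prop:GN}: for $\theta>0$ and $t>0$ we have $J(\theta u^{t})=J(u)$. To prove this, we note that $A(\theta u^t)=\theta^2t^2A(u)$, $B(\theta u^t)=\theta^{\alpha+2}t^\beta B(u)$ by \eqref{eq:scalingAB}, and $\|\theta u^t\|_2=\theta\|u\|_2$. Hence the numerator of $J(\theta u^t)$ is $\theta^{\alpha+2}t^\beta B(u)$. The denominator is $\theta^{\beta+\sigma_c}t^\beta A(u)^{\beta/2}\|u\|_2^{\sigma_c}$. These agree up to the factor $J(u)$, since $\beta+\sigma_c=\alpha+2$ by Lemma \ref{lem:sigma}. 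We also need $\theta u^t\in\Ha\setminus\{0\}$ whenever $u$ is, which is immediate since dilation and multiplication preserve $H^1$.

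For the first claim, the inclusion $S(c)\subset\Ha\setminus\{0\}$ gives $\sup_{S(c)}J\le\GN$ at once. Conversely, given any $u\ne0$, set $\theta=c/\|u\|_2$. Then $\theta u\in S(c)$ and $J(\theta u)=J(u)$, so every value of $J$ on $\Ha\setminus\{0\}$ is also attained on $S(c)$. Taking suprema yields equality. The dilation parameter is not even needed here; $t=1$ suffices.

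For the characterization of optimizers, suppose first that $u$ is an optimizer, i.e.\ $J(u)=\GN$. Then $J(\theta u^t)=J(u)=\GN$ for all $\theta,t>0$, so each $\theta u^t$ is an optimizer. Conversely, if $\theta u^t$ is an optimizer for every (or merely some) $\theta,t$, then taking $\theta=t=1$, or using the invariance, gives $J(u)=\GN$.

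There is no real obstacle. The only point needing care is the exponent bookkeeping $\beta+\sigma_c=\alpha+2$ and the scaling $B(u^t)=t^\beta B(u)$, and both are already available from Lemma \ref{lem:sigma} and \eqref{eq:scalingAB}.
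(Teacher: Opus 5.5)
Your proposal is correct and follows essentially the same route as the paper: you normalize the mass with $\theta=c/\|u\|_2$ for the first claim, and you deduce the characterization of optimizers from the invariance $J(\theta u^{t})=J(u)$. The only difference is that you verify this invariance explicitly from \eqref{eq:scalingAB} and $\beta+\sigma_c=\alpha+2$, which the paper takes from the proof of Proposition \ref{prop:GN}.
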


\begin{proof}
Given $u\in\Ha\setminus\{0\}$, set $\theta:=c/\|u\|_{2}$. Then $\theta u\in S(c)$ and
$J(\theta u)=J(u)$. This proves the first assertion. The second follows from
the invariance $J(\theta u^{t})=J(u)$ noted above.
\end{proof}

\begin{remark}\rm \label{rmk:attained}
For $a=0$, the existence of optimizers is due to Farah \cite{Farah2016} (see
Genoud \cite{Genoud2012} for the mass-critical case). For $b=0$, the situation is
different. When $a>0$, the best constant equals the one for $a=0$ and is not
attained, while for $\aH<a<0$ it is attained \cite{KMVZ2017,MNN2021}. As soon as
$b>0$, the decay of the weight restores compactness, and the constant is attained
for every $a>\aH$, including the repulsive case
(\cite[Lemma~3.2]{CamposGuzman2021} and Proposition \ref{prop:GN}). This is the
only point where compactness enters the paper.
\end{remark}

\subsection{The Cauchy problem}\label{ssec:cauchy}

To state our dynamical results without hidden hypotheses, we isolate the
properties of the Cauchy problem that we use.

\begin{assumption}\label{ass:cauchy}
For every $u_0\in\Ha(\R^{N})$, there are $T_{\pm}=T_{\pm}(u_0)\in(0,\infty]$ and a
unique maximal solution
\[
u\in C\big((-T_-,T_+);\Ha(\R^{N})\big)\cap C^1\big((-T_-,T_+);H^{-1}(\R^{N})\big)
\]
of \eqref{eq:main}. This solution conserves the mass and the energy
\eqref{eq:conserved} and depends continuously on the data. Moreover, the blow-up
alternative holds: if $T_{+}<\infty$, then
$\lim_{t\uparrow T_{+}}\|u(t)\|_{\dHa}=\infty$, and similarly backwards in time.
If in addition $u_0\in\Sigma$, then $u(t)\in\Sigma$ for all $t\in(-T_-,T_+)$, the
map $t\mapsto\||x|u(t)\|_{2}$ is continuous, and the virial identity
\eqref{eq:virial1} below holds.
\end{assumption}

We call the statements on $u_0\in\Ha$ the first part of Assumption
\ref{ass:cauchy}, and the statements on $u_0\in\Sigma$ its second part. Lemma
\ref{lem:invariance}, and hence the global existence statement in Theorem
\ref{thm:dichotomy}(i), uses only the first part.

The local theory in $\Ha(\R^N)$ was established by Suzuki \cite{Suzuki2016}, by
the energy method of Okazawa, Suzuki and Yokota \cite{Suzuki2012,Suzuki2012-bis};
see also \cite[Section~1]{CamposGuzman2021}. A Strichartz-space theory, obtained
by a contraction argument as for the classical equation
\cite{GinibreVelo1985,CazenaveWeissler1990,Cazenave2003}, was developed in
\cite[Section~4]{CamposGuzman2021} under the restrictions recalled in
Table~\ref{tab_LWP}. For $a=0$, see
\cite{GenoudStuart2008,Guzman2017,Dinh2018NA,AlouiTayachi2021}, and for $b=0$, see
\cite{KMVZ2017,Dinh2021}. The persistence of finite variance follows from the next
lemma, which requires only the regularity of energy solutions.

\begin{lemma}[Persistence of finite variance]\label{lem:variance}
Let $N\ge3$, $a>\aH$, $0<b<2$ and $0<\alpha<\frac{4-2b}{N-2}$, let $I\ni0$ be an
interval, and let $u\in C(I;\Ha(\R^N))$ solve \eqref{eq:main} in
$H^{-1}(\R^N)$ with $u(0)=u_0\in\Sigma$. Then $u\in C^1(I;H^{-1}(\R^N))$,
$|x|u\in C(I;L^2(\R^N))$, and $V(t):=\int|x|^2|u(t,x)|^2\dd$ is of class $C^1$ on
$I$ with $V'(t)=4\Imm\int\bar u\,x\cdot\nabla u\dd$.
\end{lemma}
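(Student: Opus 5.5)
We prove Lemma \ref{lem:variance} with the standard truncated-variance argument of Cazenave \cite[Proposition~6.5.1]{Cazenave2003}. The equation controls the time derivative of a weighted mass through a flux involving only $\bar u\nabla u$, which is integrable for energy solutions. We show first that $u\in C^1(I;H^{-1})$. The equation gives $\partial_t u=-i\Lc u+i|x|^{-b}|u|^\alpha u$. Here $\Lc:\Ha\to H^{-1}$ is bounded by \eqref{eq:equivnorm} and Hardy's inequality. The nonlinearity $u\mapsto|x|^{-b}|u|^\alpha u$ is continuous from $H^1$ into $H^{-1}$ by duality with Lemma~\ref{lem:weighted}(a), since $B'(u)h$ is controlled by $\|u\|_{H^1}^{\alpha+1}\|h\|_{H^1}$, with the analogous Lipschitz bound for differences. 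Hence $\partial_t u\in C(I;H^{-1})$.

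Next we fix $\theta_\eps(x):=|x|^2e^{-\eps|x|^2}$, for $\eps>0$. Then $\theta_\eps$ is bounded, and its gradient satisfies $|\nabla\theta_\eps|\lesssim\sqrt{\theta_\eps}$ uniformly in $\eps$, together with $|\nabla\theta_\eps|\lesssim\eps^{-1/2}$. Multiplication by $\theta_\eps$ therefore maps $H^1$ into $H^1$ boundedly. So $V_\eps(t):=\int\theta_\eps|u|^2$ is $C^1$, with $V_\eps'(t)=2\Ree\langle\partial_tu,\theta_\eps u\rangle_{H^{-1},H^1}$. Substituting the equation, three terms are real. The potential term is $a\int|x|^{-2}\theta_\eps|u|^2$, the nonlinear term is $\int|x|^{-b}\theta_\eps|u|^{\alpha+2}$, and the kinetic term contributes $\int\theta_\eps|\nabla u|^2$. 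These therefore drop out after taking $\Ree(-i\,\cdot)$. Only the commutator term survives, and we obtain
\[
V_\eps'(t)=2\Imm\int\bar u\,\nabla\theta_\eps\cdot\nabla u\dd .
\]
The justification step, namely that the pairing against the Friedrichs form equals $\int\nabla u\cdot\nabla(\theta_\eps\bar u)+a|x|^{-2}\theta_\eps|u|^2$, is where care is needed. It follows from the definition of the form domain as $H^1$ and the density of $C_c^\infty(\R^N\setminus\{0\})$ in $H^1$ for $N\ge3$. We expect this to be the main, though mild, obstacle.

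Using $|\nabla\theta_\eps|\lesssim\sqrt{\theta_\eps}$, we get $|V_\eps'|\lesssim\sqrt{V_\eps}\,\|\nabla u\|_2$. Integrating gives $\sqrt{V_\eps(t)}\le\sqrt{V_\eps(0)}+C|t|\sup_{[0,t]}\|u\|_{H^1}$, uniformly in $\eps$. Letting $\eps\to0$ by monotone convergence yields $|x|u(t)\in L^2$, locally bounded in $t$. Then, as $\eps\to0$, $\nabla\theta_\eps\to2x$ pointwise with the domination $|\nabla\theta_\eps|\lesssim|x|$, and $|x||u||\nabla u|$ is integrable locally uniformly in time. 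Dominated convergence in the integrated identity $V_\eps(t)=V_\eps(0)+\int_0^tV_\eps'$ thus gives $V(t)=V(0)+4\int_0^t\Imm\int\bar u\,x\cdot\nabla u$.

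It remains to prove continuity. The integrand $s\mapsto\Imm\int\bar u\,x\cdot\nabla u$ is continuous because $u\in C(I;H^1)$ and $|x|u$ is weakly continuous in $L^2$. Weak continuity holds since $|x|u$ is bounded in $L^2$ and $u$ is continuous in $L^2$. Consequently $V$ is $C^1$, with $V'(t)=4\Imm\int\bar u\,x\cdot\nabla u\dd$. Finally, continuity of $V$ combined with weak continuity upgrades $|x|u$ to $C(I;L^2)$.
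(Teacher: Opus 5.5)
Your proposal is correct and follows essentially the same route as the paper: the same truncated weight $|x|^2e^{-\eps|x|^2}$ with $|\nabla\theta_\eps|\le2\theta_\eps^{1/2}$, the same flux identity for $V_\eps'$, the same uniform bound on $V_\eps^{1/2}$, and the same monotone and dominated convergence passage to the limit. The one difference is the order of the final step, and both orders are valid. You obtain continuity of $s\mapsto\Imm\int\bar u\,x\cdot\nabla u\dd$ directly, by pairing the weakly continuous, locally bounded $|x|u$ with the strongly continuous $\nabla u$. The paper instead first gets continuity of $V$, upgrades $|x|u$ to $C(I;L^2)$, and only then concludes $V\in C^1$. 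When integrating the differential inequality, work with $(V_\eps+\delta)^{1/2}$, as the paper does, to avoid dividing by zero.
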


\begin{proof}
The operator $\Lc$ is bounded from $\Ha$ to $H^{-1}$. By Lemma
\ref{lem:weighted}(a), the map
$u\mapsto|x|^{-b}|u|^{\alpha}u=\frac{1}{\alpha+2}B'(u)$ is continuous from $H^1$
to $H^{-1}$. The equation therefore gives
$\partial_tu=-i\big(\Lc u-|x|^{-b}|u|^\alpha u\big)\in C(I;H^{-1})$, so that
$u\in C^1(I;H^{-1})$.

For $\eps>0$, put $\psi_\eps(x):=|x|^2e^{-\eps|x|^2}$. This is a smooth bounded
function with bounded gradient. Since
$\nabla\psi_\eps=2x(1-\eps|x|^2)e^{-\eps|x|^2}$ and
$\sup_{s\ge0}|1-s|e^{-s/2}=1$, we have
\begin{equation}\label{eq:psieps}
|\nabla\psi_\eps|\le2\psi_\eps^{1/2},
\end{equation}
uniformly in $\eps$. Multiplication by $\psi_\eps$ is bounded on
$H^1$. Let $f_\eps(t):=\int\psi_\eps|u(t)|^2\dd$. A computation of difference
quotients shows that $f_\eps$ is of class $C^1$ on $I$, with
\[
f_\eps'(t)=2\Ree\langle\partial_tu,\psi_\eps u\rangle_{H^{-1},H^1}.
\]
We have
\[
\langle\Lc u,\psi_\eps u\rangle=\int\psi_\eps|\nabla u|^2\dd
+a\int\psi_\eps\frac{|u|^2}{|x|^{2}}\dd+\int\bar u\,\nabla\psi_\eps\cdot\nabla u\dd ,
\]
and $\langle|x|^{-b}|u|^\alpha u,\psi_\eps u\rangle=\int\psi_\eps|x|^{-b}|u|^{\alpha+2}\dd$
is real. Hence $f_\eps'(t)=2\Imm\int\bar u\,\nabla\psi_\eps\cdot\nabla u\dd$, and
by \eqref{eq:psieps} and the Cauchy--Schwarz inequality, we get
$|f_\eps'(t)|\le4f_\eps(t)^{1/2}\|\nabla u(t)\|_2$. For $\delta>0$, this gives
$\big|\frac{d}{dt}(f_\eps+\delta)^{1/2}\big|\le2\|\nabla u(t)\|_2$. Integrating and
letting $\delta\downarrow0$, we obtain
\[
f_\eps(t)^{1/2}\le\||x|u_0\|_2+2\Big|\int_0^t\|\nabla u(s)\|_2\,ds\Big| .
\]
Since $\psi_\eps\uparrow|x|^2$ as $\eps\downarrow0$, the monotone convergence
theorem gives $u(t)\in\Sigma$, with a bound on $\||x|u(t)\|_2$ that is locally
uniform in $t$.

Next, we have
$f_\eps(t)=f_\eps(0)+2\int_0^t\Imm\int\bar u\,\nabla\psi_\eps\cdot\nabla u\dd\,ds$.
As $\eps\downarrow0$, we have $\nabla\psi_\eps\to2x$ pointwise and
$|\nabla\psi_\eps|\le2|x|$. Moreover, $|x||u||\nabla u|$ is integrable on
$[0,t]\times\R^N$ by the bound just proved. The dominated convergence theorem
gives
\[
V(t)=V(0)+4\int_0^t\Imm\int\bar u\,x\cdot\nabla u\dd\,ds,
\]
so $V$ is continuous. For $\varphi\in C_c^\infty(\R^N)$, the map
$t\mapsto\int|x|u(t)\bar\varphi\dd$ is continuous, and $\||x|u(t)\|_2$ is locally
bounded. Hence $|x|u$ is weakly continuous with values in $L^2$. Together with the
continuity of $V(t)=\||x|u(t)\|_2^2$, this gives $|x|u\in C(I;L^2)$. The map
$s\mapsto\Imm\int\bar u\,x\cdot\nabla u\dd$ is then continuous, and we infer that
$V\in C^1(I)$ with the stated derivative.
\end{proof}

\begin{proposition}[Local theory]\label{prop:lwp}
Let $N\ge3$, $0<b<2$, $a>\aH$, and let $\alpha$ satisfy \eqref{eq:range}. Then
Assumption \ref{ass:cauchy} holds.
\end{proposition}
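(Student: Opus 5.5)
The plan is to verify the items of Assumption~\ref{ass:cauchy} one by one, combining the energy-method theory of Suzuki and Okazawa--Suzuki--Yokota with Lemma~\ref{lem:variance} and a regularized virial computation.

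\textbf{Step 1: existence, uniqueness and conservation laws.} The existence of a unique maximal solution $u\in C((-T_-,T_+);\Ha)\cap C^1((-T_-,T_+);H^{-1})$ will be quoted from \cite{Suzuki2016,Suzuki2012,Suzuki2012-bis}. These works also give the conservation of $\Mass$ and $\Ea$, continuous dependence, and the blow-up alternative in the $H^1$ norm. Their abstract framework covers $\Lc$, a nonnegative self-adjoint operator when $a>\aH$ (Friedrichs extension). It also covers the nonlinearity $g(u)=|x|^{-b}|u|^\alpha u$, which is energy-subcritical by Lemma~\ref{lem:weighted}(a): $g$ is locally Lipschitz from $H^1$ to $H^{-1}$ and is the derivative of $\frac{1}{\alpha+2}B$.

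The blow-up alternative in these references is stated for $\|u(t)\|_{H^1}$. By mass conservation and \eqref{eq:equivnorm}, it is equivalent to $\|u(t)\|_{\dHa}\to\infty$. One subtlety is that the alternative is sometimes stated as $\limsup$ rather than $\lim$. If only $\limsup$ is available, I would upgrade it as follows. The local existence time depends only on a bound for $\|u(t_0)\|_{H^1}$. A sequence $t_n\uparrow T_+$ along which the norm stays bounded would therefore let us extend the solution past $T_+$, a contradiction. Hence the norm must tend to infinity.

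\textbf{Step 2: persistence of finite variance.} The $\Sigma$ part follows from Lemma~\ref{lem:variance}. It gives $u(t)\in\Sigma$, the continuity of $t\mapsto\||x|u(t)\|_2$, and $V\in C^1$ with $V'(t)=4\Imm\int\bar u\,x\cdot\nabla u\dd$.

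\textbf{Step 3: the virial identity (main obstacle).} The remaining task is \eqref{eq:virial1}, namely $V''(t)=8\Qa(u(t))$, or equivalently
\[
V'(t)=V'(0)+8\int_0^t\Qa(u(s))\,ds .
\]
This is the delicate point: $u$ has only $\Ha$ regularity, and both the potential and the weight are singular at the origin. My plan is to avoid formal integration by parts on $u$ itself and argue by approximation. I would take regularized data $u_0^n\in\Sigma$ with $u_0^n\to u_0$ in $\Ha\cap\Sigma$, chosen so that the corresponding solutions lie in $D(\Lc)$; Kato-type regularity in the energy method gives $u^n\in C(D(\Lc))\cap C^1(L^2)$. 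For these more regular solutions, I would derive the identity for the truncated weight $\psi_\eps$ used in Lemma~\ref{lem:variance}, and then pass to the limit.

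The virial computation produces three contributions.
\begin{enumerate}
\item The kinetic term gives $\int\nabla^2\psi_\eps(\nabla u,\nabla\bar u)$ together with a $\Delta^2\psi_\eps$ term.
\item The potential term gives $-\int\nabla\psi_\eps\cdot\nabla(a|x|^{-2})|u|^2$. For $\psi=|x|^2$ this equals $4a\int|x|^{-2}|u|^2$, because $x\cdot\nabla|x|^{-2}=-2|x|^{-2}$; this is why the potential joins the kinetic term with the same coefficient.
\item The nonlinear term gives a multiple of $\int\Delta\psi_\eps|x|^{-b}|u|^{\alpha+2}$, plus the term $\int\nabla\psi_\eps\cdot\nabla(|x|^{-b})|u|^{\alpha+2}$. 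With $\psi=|x|^2$ these combine into the coefficient $\frac{N\alpha+2b}{\alpha+2}\cdot 4$.
\end{enumerate}
All three terms are dominated uniformly in $\eps$ by the Hardy inequality \eqref{eq:hardy} and Lemma~\ref{lem:weighted}(a).

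The limits are then taken in order. First let $\eps\to0$ using dominated convergence. Next let $n\to\infty$ using continuous dependence in $\Ha$ together with the variance bound from Lemma~\ref{lem:variance}, which makes the variance bound uniform in $n$. Both sides of the integrated identity are continuous under these limits, so we obtain
\[
V'(t)=V'(0)+8\int_0^t\Qa(u(s))\,ds,
\]
which is \eqref{eq:virial1}.

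If the regularization in $D(\Lc)$ proves awkward near the origin, I would instead regularize the potential to $a(|x|^2+\delta)^{-1}$ and the weight to $(|x|^2+\delta)^{-b/2}$, and let $\delta\to0$ at the end. Justifying the convergence of these regularized solutions to $u$ in $C([0,t];\Ha)$ is the step I expect to be hardest.
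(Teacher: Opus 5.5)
Your Steps~1 and~2, and the overall architecture, match the paper. Its proof of Proposition~\ref{prop:lwp} quotes Suzuki for the first part of Assumption~\ref{ass:cauchy} and uses mass conservation to pass between the $\Ha$ and $\dHa$ blow-up alternatives. It takes the variance statements from Lemma~\ref{lem:variance}, and obtains \eqref{eq:virial1} from Lemma~\ref{lem:virial}, where the $|x|^2$ computation is done ``for regular solutions'' and extended by the regularization argument of \cite[Section~6.5]{Cazenave2003} together with continuous dependence.

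The trouble is the one place where you are more specific than the paper: the regular class $C(D(\Lc))\cap C^1(L^2)$ in Step~3. A solution in that class satisfies $|x|^{-b}|u|^{\alpha}u=\Lc u-i\partial_tu\in L^2$, so the $D(\Lc)$-regularity you invoke requires the nonlinearity to map $D(\Lc)$ into $L^2$. Take $\rho$ as in \eqref{eq:rho} and a radial cut-off $\chi$ equal to $1$ near $0$. For every $a>\aH$, the function $\chi|x|^{-\rho}$ lies in $H^1$, because $\rho<\frac{N-2}{2}$. Since $\Lc|x|^{-\rho}=0$ on $\R^N\setminus\{0\}$, the function $\Lc(\chi|x|^{-\rho})$ is smooth and supported where $\nabla\chi\ne0$, so $\chi|x|^{-\rho}\in D(\Lc)$. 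Its image under $v\mapsto|x|^{-b}|v|^{\alpha}v$ behaves like $|x|^{-b-(\alpha+1)\rho}$ near $0$. This is not square integrable once $2b+2(\alpha+1)\rho\ge N$, that is, once $\sqrt{a+(N-2)^2/4}\le\frac{(N-2)\alpha-2+2b}{2(\alpha+1)}$. This is exactly the negation of the coupling condition in the third row of Table~\ref{tab_LWP}; for the cubic model in $\R^3$ it is the interval $-\frac14<a\le-\frac14+\frac{b^2}{9}$. On that part of the range, the Kato-type $D(\Lc)$ theory you appeal to is simply not available.

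There the virial identity rests entirely on your fallback. The step you postpone, namely convergence in $\Ha$ of the flows with regularized potential and weight to $u$ on compact subintervals of $(-T_-,T_+)$, is then the whole content of the justification rather than a technicality. It can be done by the compactness--uniqueness argument behind the energy method, in three steps.
\begin{enumerate}
\item Obtain $H^1$ bounds, uniform in $\delta$, on a time interval depending only on $\|u_0\|_{H^1}$. These come from conservation of the regularized energy, since the Hardy and weighted constants are uniform because $(|x|^2+\delta)^{-b/2}\le|x|^{-b}$.
\item Extract a weak-$*$ limit solving \eqref{eq:main}, and identify it with $u$ by uniqueness.
\item Upgrade to strong convergence at each fixed time. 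Use the conservation laws, the convergence of the weighted term as in Lemma~\ref{lem:weighted}(b), and weak lower semicontinuity of the quadratic forms.
\end{enumerate}
Pointwise-in-time convergence with uniform bounds then suffices to pass to the limit in $V'(t)=V'(0)+8\int_0^t\Qa(u(s))\,ds$, exactly as in your $n\to\infty$ step. The paper does not write this out either; it defers it to \cite[Section~6.5]{Cazenave2003} and \cite[Lemma~3.4]{CamposGuzman2021}. As it stands, however, your Step~3 relies on a regularization that fails near the Hardy endpoint and leaves the working alternative unproved.
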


\begin{proof}
The first part of Assumption \ref{ass:cauchy} is due to Suzuki
\cite{Suzuki2016,Suzuki2012,Suzuki2012-bis}, as recalled in
\cite{CamposGuzman2021}. Since the mass is conserved, the blow-up of the $\Ha$
norm is equivalent to the blow-up of the $\dHa$ norm. The persistence of finite
variance and the continuity of $t\mapsto\||x|u(t)\|_2$ follow from Lemma
\ref{lem:variance}. The virial identity \eqref{eq:virial1} is Lemma
\ref{lem:virial} below.
\end{proof}

\subsection{Virial identities}\label{ssec:virial}

The last ingredient is the virial identity. It explains why the Pohozaev
functional $\Qa$, defined in \eqref{eq:Vc} as the derivative at $t=1$ of the
fibering map, appears in a dynamical statement. The same quantity governs the
convexity of the variance.

\begin{lemma}[Virial identity]\label{lem:virial}
Let $N\ge3$, $0<b<2$, $a>\aH$ and $0<\alpha<\frac{4-2b}{N-2}$. Let $u_0\in\Sigma$,
and let $u$ be the maximal solution of \eqref{eq:main} given by the first part of
Assumption \ref{ass:cauchy}. Then $V(t):=\int|x|^{2}|u(t,x)|^{2}\dd$ is of class
$C^{2}$ on $(-T_-,T_+)$, and
\begin{equation}\label{eq:virial1}
V'(t)=4\Imm\int\bar u\,x\cdot\nabla u\dd,
\qquad
V''(t)=8\,\Qa\big(u(t)\big).
\end{equation}
\end{lemma}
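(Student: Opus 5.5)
Lemma \ref{lem:variance} already gives $V\in C^1$ with $V'(t)=4\Imm\int\bar u\,x\cdot\nabla u\dd$. It therefore remains to show that $V'$ is $C^1$ with derivative $8\Qa(u(t))$. We work with the truncated weights $\psi_\eps(x)=|x|^2e^{-\eps|x|^2}$ from the proof of Lemma \ref{lem:variance}, and we avoid any regularity beyond $\Ha$ by using the integrated (in time) form of the identity.

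\textbf{Step 1: localized virial at the level of $u\in C^1(I;H^{-1})$.} Set
\[
M_\eps(t):=2\Imm\int\bar u\,\nabla\psi_\eps\cdot\nabla u\dd .
\]
Since $\psi_\eps$ has bounded derivatives of all orders, the standard localized virial computation gives
\[
M_\eps'(t)=4\Ree\int\partial_j\partial_k\psi_\eps\,\partial_ju\,\partial_k\bar u\dd-\int\Delta^2\psi_\eps|u|^2\dd+4a\int\frac{x\cdot\nabla\psi_\eps}{|x|^4}|u|^2\dd-\frac{2\alpha}{\alpha+2}\int\Delta\psi_\eps\frac{|u|^{\alpha+2}}{|x|^b}\dd+\frac{4}{\alpha+2}\int\nabla\psi_\eps\cdot\nabla(|x|^{-b})|u|^{\alpha+2}\dd .
\]
To justify this rigorously, I would regularize in space, e.g. apply $(1+\delta\Lc)^{-1}$ or mollify, compute for smooth approximate solutions, and pass to the limit. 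The simplest route is to write the identity in integrated form, $M_\eps(t)-M_\eps(0)=\int_0^t(\cdots)\,ds$, derive it for data in $D(\Lc)$ where the computation is classical, and extend by continuous dependence in $\Ha$ from Assumption \ref{ass:cauchy}. Each term on the right is continuous on $\Ha$: the potential term because $|x\cdot\nabla\psi_\eps|\lesssim|x|^2$ and by Hardy, and the nonlinear terms by Lemma \ref{lem:weighted}, since $|\nabla\psi_\eps\cdot\nabla|x|^{-b}|\lesssim|x|^{-b}$.

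\textbf{Step 2: let $\eps\downarrow0$.} Pointwise we have $\partial_j\partial_k\psi_\eps\to2\delta_{jk}$, $\Delta^2\psi_\eps\to0$ with $|\Delta^2\psi_\eps|\lesssim\eps$, $x\cdot\nabla\psi_\eps\to2|x|^2$, $\Delta\psi_\eps\to2N$, and $\nabla\psi_\eps\cdot\nabla|x|^{-b}\to-2b|x|^{-b}$. All of these come with uniform bounds of the form $C$, $C|x|^2$, or $C|x|^{-b}$. Dominated convergence, with $u(s)\in\Ha$ bounded on compact time intervals, then turns the right-hand side into
\[
8\|\nabla u\|_2^2+8a\int\frac{|u|^2}{|x|^2}-\frac{4N\alpha+8b}{\alpha+2}B(u)=8\Qa(u).
\]
Here one checks $\frac{4N\alpha}{\alpha+2}+\frac{8b}{\alpha+2}=\frac{8\beta}{\alpha+2}$. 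On the left-hand side, $M_\eps(t)\to V'(t)$ by dominated convergence, using $|\nabla\psi_\eps|\le2|x|$ and $|x|u\in L^2$ from Lemma \ref{lem:variance}.

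\textbf{Step 3: conclude.} This gives $V'(t)=V'(0)+8\int_0^t\Qa(u(s))\,ds$. Since $\Qa$ is continuous on $H^1$ (Lemma \ref{lem:weighted} and \eqref{eq:equivnorm}) and $u\in C(I;\Ha)$, the integrand is continuous. Hence $V\in C^2$ and $V''=8\Qa(u)$.

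The main obstacle is Step 1: justifying the localized identity for mere energy solutions, in the presence of the singular potential $a|x|^{-2}$. For $\aH<a<\aH+1$, functions in $D(\Lc)$ (Friedrichs extension) are singular at the origin, so the integrations by parts near $0$ must be checked. I would handle this with an additional cutoff vanishing near the origin, or by noting that the boundary terms vanish since $\psi_\eps=O(|x|^2)$ there. Only then would I extend by density and continuous dependence.
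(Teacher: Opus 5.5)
Your proposal is correct and follows essentially the paper's route. Both use the classical virial computation: your coefficients ($8a$ for the potential and $\frac{4N\alpha+8b}{\alpha+2}=\frac{8\beta}{\alpha+2}$ for the nonlinearity) agree with the paper's and give $V''=8\Qa$. Both do this computation for regular solutions, extend it to energy solutions by truncated-weight regularization and continuous dependence, and use Lemma~\ref{lem:variance} for the finite variance needed to let $\eps\downarrow0$.

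Like the paper, you take the regular-solution step on trust. Note that density of data in $D(\Lc)$ is not a safe way to supply it in the whole range. For $a=0$, $N=3$ and $b\ge\frac32$, one has $|x|^{-b}|u|^{\alpha}u\notin L^2$ near the origin once $u(0)\ne0$, so $D(\Lc)$-regularity does not persist under the flow. The justification therefore has to come from your other suggestion, regularizing the solution or the equation, along the lines of the argument the paper cites.
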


\begin{proof}
The first identity in \eqref{eq:virial1} is Lemma \ref{lem:variance}. The second
one is stated for \eqref{eq:main}, with a general weight, in
\cite[Lemma~3.4]{CamposGuzman2021}. We give the computation for the weight
$|x|^2$, because the exact coefficients matter here (see Remark
\ref{rmk:virialtypo}). We write \eqref{eq:main} as $i\partial_tu+\Delta u=G$ with
$G:=\frac{a}{|x|^{2}}u-|x|^{-b}|u|^{\alpha}u$, and we follow the classical
computation \cite[Chapter~6]{Cazenave2003}. Since $\bar uG$ is real, we have
$\partial_t|u|^2=-2\,\mathrm{div}\,\Imm(\bar u\nabla u)$. For a smooth real weight
$w$, we have
\begin{equation}\label{eq:virialgeneral}
\frac{d}{dt}\Big(2\Imm\int\bar u\,\nabla w\cdot\nabla u\dd\Big)
=\int\Big(-\Delta\Delta w\,|u|^{2}+4\Ree(\partial_j\bar u\,\partial_k u)\,\partial_{jk}w
+2\nabla w\cdot\{G,u\}_P\Big)\dd,
\end{equation}
with $\{f,g\}_P:=\Ree(f\nabla\bar g-g\nabla\bar f)$ and summation over repeated
indices. For $w=|x|^{2}$, we have $\Delta\Delta w=0$, $\partial_{jk}w=2\delta_{jk}$
and $\nabla w=2x$, so the first two terms give $8\|\nabla u\|_{2}^{2}$. For the
potential, we have $\{\frac{a}{|x|^{2}}u,u\}_P=-a|u|^{2}\nabla(|x|^{-2})=2a|u|^{2}\frac{x}{|x|^{4}}$,
hence $2\nabla w\cdot\{\frac{a}{|x|^{2}}u,u\}_P=8a\frac{|u|^{2}}{|x|^{2}}$. Thus the
inverse-square potential contributes with the same coefficient as the kinetic
term, which is the analytic expression of its scale invariance. For the
nonlinear part, we write $W:=|x|^{-b}$ and $h:=W|u|^{\alpha+2}$. Then
$\{-W|u|^{\alpha}u,u\}_P=|u|^{2}\nabla\big(W|u|^{\alpha}\big)$. Separating the
derivative that falls on $W$ from the one that falls on $|u|$, we find
\[
2\nabla w\cdot\big\{-W|u|^{\alpha}u,\,u\big\}_P
=\frac{4\alpha}{\alpha+2}\,x\cdot\nabla h
+\frac{8}{\alpha+2}\,|u|^{\alpha+2}\,x\cdot\nabla W .
\]
Since $x\cdot\nabla W=-bW$ and $\int x\cdot\nabla h=-N\int h$, the integral of the
last expression equals
\[
-\Big(\frac{4N\alpha}{\alpha+2}+\frac{8b}{\alpha+2}\Big)\int h\dd
=-\frac{4(N\alpha+2b)}{\alpha+2}\int\frac{|u|^{\alpha+2}}{|x|^{b}}\dd .
\]
Collecting the three contributions, we obtain
\[
V''(t)=8\|u(t)\|_{\dHa}^{2}-\frac{4(N\alpha+2b)}{\alpha+2}\int\frac{|u(t)|^{\alpha+2}}{|x|^{b}}\dd
=8\Qa(u(t))
\]
by \eqref{eq:Qdef}. This computation is first performed for regular solutions.
It extends to the solutions considered here by the regularization argument of
\cite[Section~6.5]{Cazenave2003}, combined with the continuous dependence
on the data and with Lemma \ref{lem:variance}; see also
\cite[Lemma~3.4]{CamposGuzman2021} and \cite{Dinh2021}. The singular terms cause no
difficulty, since $|x|^{-2}|u|^2$ and $|x|^{-b}|u|^{\alpha+2}$ are integrable by
\eqref{eq:hardy} and Lemma \ref{lem:weighted}.
\end{proof}

\begin{remark}\rm \label{rmk:virialtypo}
With the defocusing sign the same computation gives
$$V''(t)=8\|u(t)\|_{\dHa}^{2}+\frac{4(N\alpha+2b)}{\alpha+2}\int_{\R^N}\frac{|u|^{\alpha+2}}{|x|^b}\,dx.$$
For $N=3$ and $\alpha=2$, the coefficient of the nonlinear term is $2(3+b)$. In
\cite[Lemma~4.1]{HLW2024}, it is written as $8$. Only the sign of that term is
used in \cite{HLW2024}, so this has no effect there. Here the exact coefficient
matters, since it identifies $V''$ with $8\Qa$.
\end{remark}

%%%%%%%%%%%%%%%%%%%%%%%%%%%%%%%%%%%%%%%%%%%%%%%%%%%%%%%%%%%%%%%%%%%%%%%%%%%%%%
\section{The normalized level and its minimizers}\label{sec:variational}
%%%%%%%%%%%%%%%%%%%%%%%%%%%%%%%%%%%%%%%%%%%%%%%%%%%%%%%%%%%%%%%%%%%%%%%%%%%%%%

Throughout this section, $N\ge3$, $a>-\frac{(N-2)^{2}}{4}$, $0<b<2$, and
\eqref{eq:range} holds. Everything rests on an elementary but complete
description of the fibering map.

\begin{lemma}[The fibering map]\label{lem:fibering}
Let $u\in\Ha(\R^{N})\setminus\{0\}$ and set $h_u(t):=\Ea(u^{t})$ for $t>0$. Then
\begin{equation}\label{eq:hu}
h_u(t)=\frac{t^{2}}{2}A(u)-\frac{t^{\beta}}{\alpha+2}B(u),
\qquad
t\,h_u'(t)=\Qa(u^{t})=t^{2}A(u)-\frac{\beta t^{\beta}}{\alpha+2}B(u),
\end{equation}
and
\begin{equation}\label{eq:master}
\Ea(u^{t})=\frac{\beta-2}{2\beta}\,t^{2}A(u)+\frac1\beta\,\Qa(u^{t}),
\qquad t>0.
\end{equation}
The map $h_u$ has exactly one critical point,
\begin{equation}\label{eq:tu}
t_u=\Big(\frac{(\alpha+2)A(u)}{\beta B(u)}\Big)^{\frac{1}{\beta-2}}\in(0,\infty),
\end{equation}
which is a strict global maximum. The map $h_u$ increases strictly on $(0,t_u)$
and decreases strictly on $(t_u,\infty)$, with $h_u(t)\to0$ as $t\downarrow0$ and
$h_u(t)\to-\infty$ as $t\to\infty$. Equivalently, $\Qa(u^{t})>0$ for $t<t_u$,
$\Qa(u^{t_u})=0$ and $\Qa(u^{t})<0$ for $t>t_u$. In particular $t_u$ is the only
$t>0$ with $u^{t}\in V(\|u\|_{2})$, and $t_u<1$ if and only if $\Qa(u)<0$, while
$t_u>1$ if and only if $\Qa(u)>0$. Finally,
\begin{equation}\label{eq:maxvalue}
\max_{t>0}\Ea(u^{t})=h_u(t_u)
=K_{\alpha,N,b}\,\frac{A(u)^{\frac{\beta}{\beta-2}}}{B(u)^{\frac{2}{\beta-2}}}
=K_{\alpha,N,b}\,J(u)^{-\frac{2}{\beta-2}}\,\|u\|_{2}^{-\gamma},
\end{equation}
with $J$ as in \eqref{eq:GNquotient} and $K_{\alpha,N,b}$ as in \eqref{eq:K}.
\end{lemma}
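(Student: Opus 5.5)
The plan is to reduce everything to one-variable calculus in $t$, using the scaling relations \eqref{eq:scalingAB}. Since $u\neq0$, we have $A(u)>0$ by \eqref{eq:equivnorm}. We also have $B(u)>0$, because $|x|^{-b}|u|^{\alpha+2}$ is nonnegative and not identically zero. Substituting $A(u^t)=t^2A(u)$ and $B(u^t)=t^\beta B(u)$ into \eqref{eq:EQAB} gives the formula for $h_u$ in \eqref{eq:hu}. Differentiating, we get $t h_u'(t)=t^2A-\frac{\beta t^\beta}{\alpha+2}B$. Applying \eqref{eq:EQAB} to $u^t$ identifies this with $\Qa(u^t)$, consistent with the definition \eqref{eq:Vc} via $\frac{d}{ds}\big|_{s=1}\Ea((u^t)^s)$, since $(u^t)^s=u^{ts}$. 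For \eqref{eq:master}, I eliminate $B(u^t)$: from $\Qa(u^t)=t^2A-\frac{\beta}{\alpha+2}t^\beta B$ we get $\frac{t^\beta B}{\alpha+2}=\frac{1}{\beta}(t^2A-\Qa(u^t))$. Inserting this into $h_u(t)$ gives $\big(\frac12-\frac1\beta\big)t^2A+\frac1\beta\Qa(u^t)$, which is exactly \eqref{eq:master}.

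For the shape of $h_u$, I write $t h_u'(t)=t^2\big(A-\frac{\beta}{\alpha+2}Bt^{\beta-2}\big)$. Since $\beta>2$ by Lemma \ref{lem:sigma}, the bracket is strictly decreasing in $t$. It is positive near $0$ and tends to $-\infty$. Hence it has a unique zero $t_u$, given by \eqref{eq:tu}, and the sign pattern follows. This yields strict monotonicity on $(0,t_u)$ and on $(t_u,\infty)$ and a strict global maximum at $t_u$. The limits $h_u(t)\to0$ as $t\downarrow0$ and $h_u(t)\to-\infty$ as $t\to\infty$ follow from $\beta>2$. The statements about $u^t\in V(\|u\|_2)$ follow because $\Mass(u^t)=\Mass(u)$ and $\Qa(u^t)=t h_u'(t)$. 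The equivalence $t_u<1\iff\Qa(u)<0$ is the sign pattern evaluated at $t=1$.

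For \eqref{eq:maxvalue}, I evaluate \eqref{eq:master} at $t_u$, where $\Qa(u^{t_u})=0$. This gives $h_u(t_u)=\frac{\beta-2}{2\beta}t_u^2A$. Then
$$t_u^2=\Big(\frac{\alpha+2}{\beta}\Big)^{\frac{2}{\beta-2}}A^{\frac{2}{\beta-2}}B^{-\frac{2}{\beta-2}},$$
so $h_u(t_u)=K_{\alpha,N,b}A^{\frac{\beta}{\beta-2}}B^{-\frac{2}{\beta-2}}$, with $K$ as in \eqref{eq:K}. Finally, I substitute $B=J(u)A^{\beta/2}\|u\|_2^{\sigma_c}$. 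The power of $A$ becomes $\frac{\beta}{\beta-2}-\frac{\beta}{\beta-2}=0$, so the kinetic energy cancels. The power of $\|u\|_2$ becomes $-\frac{2\sigma_c}{\beta-2}=-\gamma$ by \eqref{eq:betasig}.

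No step is a genuine obstacle. The only point requiring care is the exponent bookkeeping in this last substitution, namely checking that the $A$-exponent vanishes and that the mass exponent matches $\gamma$ as defined in \eqref{eq:betasig}. The positivity of $B(u)$ should also be stated explicitly, so that $t_u$ is well defined.
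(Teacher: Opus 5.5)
Your proposal is correct and follows the paper's proof essentially step by step: the scaling relations give $h_u$, eliminating $B$ gives \eqref{eq:master}, comparing $t^{\beta-2}$ with $\frac{(\alpha+2)A}{\beta B}$ gives the sign pattern, and evaluating \eqref{eq:master} at $t_u$ and then substituting $B=J(u)A^{\beta/2}\|u\|_2^{\sigma_c}$ gives \eqref{eq:maxvalue}. Your explicit justification of $A(u)>0$ and $B(u)>0$, and your check that $t h_u'(t)$ agrees with the dilation-derivative definition of $\Qa(u^t)$, are small additions that the paper leaves implicit.
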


\begin{proof}
The formula for $h_u$ follows from \eqref{eq:EQAB} and \eqref{eq:scalingAB}.
Differentiating it, we get $t h_u'(t)=t^{2}A-\frac{\beta t^{\beta}}{\alpha+2}B$,
which is $\Qa(u^{t})$ by the same two relations. Identity \eqref{eq:master} is a
matter of elimination. Subtracting $\frac1\beta\Qa(u^{t})$ from $\Ea(u^{t})$
removes the term containing $B(u)$ and leaves
$\frac{t^{2}}{2}A-\frac{t^{2}}{\beta}A=\frac{\beta-2}{2\beta}t^{2}A$.

Since $A(u)>0$ and $B(u)>0$ for $u\ne0$, and $\beta>2$ by Lemma \ref{lem:sigma},
the equation $h_u'(t)=0$ reads $t^{\beta-2}=\frac{(\alpha+2)A}{\beta B}$ and has
the single positive root \eqref{eq:tu}. The sign of $h_u'(t)$ is the sign of
$\frac{(\alpha+2)A}{\beta B}-t^{\beta-2}$, which is positive for $t<t_u$ and
negative for $t>t_u$. This gives the monotonicity, the strict maximality of $t_u$ and the
statements about the sign of $\Qa(u^{t})$. The behaviour at $0$ and at infinity
is read off \eqref{eq:hu}, again because $\beta>2$. Taking $t=1$ in the sign
analysis, we obtain the last equivalences.

It remains to compute the maximum. By \eqref{eq:master} and $\Qa(u^{t_u})=0$, we
have $h_u(t_u)=\frac{\beta-2}{2\beta}t_u^{2}A$. Inserting \eqref{eq:tu}, we obtain
\[
h_u(t_u)=\frac{\beta-2}{2\beta}\Big(\frac{\alpha+2}{\beta}\Big)^{\frac{2}{\beta-2}}
A^{1+\frac{2}{\beta-2}}B^{-\frac{2}{\beta-2}}
=K_{\alpha,N,b}\,\frac{A^{\frac{\beta}{\beta-2}}}{B^{\frac{2}{\beta-2}}}.
\]
We write $B=J(u)A^{\beta/2}\|u\|_{2}^{\sigma_c}$. Then the powers of $A$ cancel:
\[
\frac{A^{\frac{\beta}{\beta-2}}}{B^{\frac{2}{\beta-2}}}
=\frac{A^{\frac{\beta}{\beta-2}}}
{J(u)^{\frac{2}{\beta-2}}A^{\frac{\beta}{\beta-2}}\|u\|_{2}^{\frac{2\sigma_c}{\beta-2}}}
=J(u)^{-\frac{2}{\beta-2}}\|u\|_{2}^{-\gamma},
\]
by the definition \eqref{eq:betasig} of $\gamma$. This is \eqref{eq:maxvalue}.
\end{proof}

The cancellation of $A$ in the last display drives the whole section. On each
fibre $\{u^{t}:t>0\}$, the energy has a maximum whose value is a decreasing
function of the Gagliardo--Nirenberg quotient. The size of $u$ enters only through
its mass. Minimizing over the sphere therefore amounts to maximizing $J$.

\begin{lemma}[Minimax characterization, cf.\ {\cite{HLW2024}}]\label{lem:minimax}
For every $c>0$,
\begin{equation}\label{eq:minimax}
m_a(c)=\inf_{u\in S(c)}\ \max_{t>0}\ \Ea(u^{t}).
\end{equation}
\end{lemma}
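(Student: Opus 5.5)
The identity will follow from the fibering Lemma \ref{lem:fibering} by proving two inequalities. Throughout I use that the dilation $u\mapsto u^{t}$ preserves the mass, so $u^{t}\in S(c)$ whenever $u\in S(c)$. I also use that $(u^{s})^{t}=u^{st}$. This gives
\[
\max_{t>0}\Ea\big((u^{s})^{t}\big)=\max_{t>0}\Ea(u^{t}),
\]
so the fibre maximum is constant along each fibre.

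\emph{Lower bound} ($\le$). Fix $u\in S(c)$. By Lemma \ref{lem:fibering} there is a unique $t_u>0$ with $\Qa(u^{t_u})=0$, and hence $u^{t_u}\in V(c)$. Moreover
\[
\Ea(u^{t_u})=\max_{t>0}\Ea(u^{t}).
\]
Therefore
\[
m_a(c)\le\Ea(u^{t_u})=\max_{t>0}\Ea(u^{t}).
\]
Taking the infimum over $u\in S(c)$ gives $m_a(c)\le\inf_{S(c)}\max_{t}\Ea(u^{t})$.

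\emph{Upper bound} ($\ge$). Let $v\in V(c)$. Then $\Qa(v)=0$, so by the sign analysis in Lemma \ref{lem:fibering} we have $t_v=1$. This gives
\[
\Ea(v)=h_v(1)=\max_{t>0}\Ea(v^{t})\ge\inf_{u\in S(c)}\max_{t>0}\Ea(u^{t}).
\]
Taking the infimum over $v\in V(c)$ yields the reverse inequality. For this I need $V(c)\neq\emptyset$, which holds because $S(c)$ is nonempty and $u^{t_u}\in V(c)$ for any $u\in S(c)$.

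There is no real obstacle here. The only points to check are that $u\ne0$ on $S(c)$ with $c>0$, so that $A(u)>0$ and $B(u)>0$ and Lemma \ref{lem:fibering} applies, and that uniqueness of the critical point identifies $t_v=1$ for $v\in V(c)$. Both are already contained in Lemma \ref{lem:fibering}.
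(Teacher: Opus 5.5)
Your proposal is correct and follows the paper's own proof: on $V(c)$ you use $t_v=1$ to get $\Ea(v)=\max_{t>0}\Ea(v^{t})$, and on $S(c)$ you use $u^{t_u}\in V(c)$, both taken from Lemma \ref{lem:fibering}. The observations that $V(c)\neq\emptyset$ and that the fibre maximum is constant along fibres are harmless but not needed; if $V(c)$ were empty, the corresponding inequality would hold trivially with $m_a(c)=+\infty$.
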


\begin{proof}
If $u\in V(c)$, then $t_u=1$ by Lemma \ref{lem:fibering}, so
$\Ea(u)=\max_{t>0}\Ea(u^{t})$, which is at least the right-hand side of
\eqref{eq:minimax}. Taking the infimum over $V(c)$, we obtain one inequality.
Conversely, for $u\in S(c)$, the function $u^{t_u}$ belongs to $V(c)$, since
dilations preserve the mass and $\Qa(u^{t_u})=0$. Hence
$\max_{t>0}\Ea(u^{t})=\Ea(u^{t_u})\ge m_a(c)$. Taking the infimum over $S(c)$, we
obtain the other inequality.
\end{proof}

\begin{proof}[Proof of Theorem \ref{thm:level}, parts (i) and (ii)]
By Lemma \ref{lem:minimax}, formula \eqref{eq:maxvalue} and Lemma
\ref{lem:GNrestrict}, we get
\[
m_a(c)=\inf_{u\in S(c)}K_{\alpha,N,b}J(u)^{-\frac{2}{\beta-2}}c^{-\gamma}
=K_{\alpha,N,b}\,c^{-\gamma}\Big(\sup_{u\in S(c)}J(u)\Big)^{-\frac{2}{\beta-2}}
=K_{\alpha,N,b}\,\GN^{-\frac{2}{\beta-2}}\,c^{-\gamma},
\]
where we used that $s\mapsto s^{-\frac{2}{\beta-2}}$ is continuous and decreasing
on $(0,\infty)$. Since $\frac{2}{\beta-2}=\frac{4}{N\alpha+2b-4}$, this is
\eqref{eq:levelformula}. The properties of $c\mapsto m_a(c)$ listed in (i) are
those of the power function $c^{-\gamma}$ with $\gamma>0$.

For (ii), the attainment of $\GN$ is part of Proposition \ref{prop:GN}. Let
$u\in V(c)$. By Lemma \ref{lem:fibering}, we have $t_u=1$ and
$\Ea(u)=K_{\alpha,N,b}J(u)^{-\frac{2}{\beta-2}}c^{-\gamma}$. Comparing with
\eqref{eq:levelformula}, we infer that $\Ea(u)=m_a(c)$ if and only if $J(u)=\GN$.
If $W$ is an optimizer with $\|W\|_{2}=c$, then $W^{t}$ is an optimizer for every
$t>0$ by Lemma \ref{lem:GNrestrict}, and $W^{t_W}\in V(c)$. By what we just
proved, $W^{t_W}$ is a minimizer of $m_a(c)$. Optimizers of mass $c$ exist,
because any optimizer can be brought to mass $c$ by multiplication by a positive
constant (Lemma \ref{lem:GNrestrict}). Hence $m_a(c)$ is attained. Conversely, a
minimizer $u$ is an optimizer lying on $V(c)$, so $t_u=1$ and $u=u^{t_u}$ is of
the stated form.
\end{proof}

The value \eqref{eq:levelformula} itself does not require any compactness. It is
the infimum of an explicit decreasing function of $J$, and it holds whether or not
$\GN$ is attained. Compactness enters only through the existence of minimizers,
by way of Proposition \ref{prop:GN}.

We now turn to the ground-state property. A nontrivial solution $u$ of
\eqref{eq:stationary} is a \emph{ground state} at the frequency $\omega$ if
$\Sw(u)\le\Sw(v)$ for every nontrivial solution $v$ of \eqref{eq:stationary} with
the same $\omega$. The first step is to observe that optimizers are solutions, up
to a normalization.

\begin{lemma}[Euler--Lagrange equation]\label{lem:EL}
Let $W$ be an optimizer of \eqref{eq:GN}. Then $W$ is a weak solution of
\[
\Lc W+\frac{\sigma_c A(W)}{\beta\,\Mass(W)}\,W
=\frac{(\alpha+2)A(W)}{\beta\,B(W)}\,|x|^{-b}|W|^{\alpha}W .
\]
In particular, if in addition $\Qa(W)=0$, then $W$ solves \eqref{eq:stationary}
with $\omega=\frac{\sigma_cA(W)}{\beta\Mass(W)}>0$.
\end{lemma}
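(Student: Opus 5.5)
We derive the Euler--Lagrange equation by differentiating the logarithm of the quotient $J$ at the maximizer $W$. Since $W\ne0$, we have $A(W)>0$, $\Mass(W)>0$ and $B(W)>0$ (the last because $J(W)=\GN>0$). Hence $\log J=\log B-\frac{\beta}{2}\log A-\frac{\sigma_c}{2}\log\Mass$ is well defined and $C^1$ near $W$ in $H^1$. Here we use that $B$ is $C^1$ by Lemma~\ref{lem:weighted}(a), and that $A$ and $\Mass$ are continuous quadratic forms on $H^1$; Proposition~\ref{prop:equivsobolev} gives continuity of $A$. Fix $h\in H^1(\R^N)$. The real function $s\mapsto J(W+sh)$ is defined for small $|s|$ and has a maximum at $s=0$, so its derivative vanishes there. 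We have $A'(W)h=2\Ree\langle W,h\rangle_a$ and $\Mass'(W)h=2\Ree\int W\bar h$. Lemma~\ref{lem:weighted}(a) gives $B'(W)h=(\alpha+2)\Ree\int|x|^{-b}|W|^\alpha W\bar h$. The vanishing derivative therefore reads
\[
\frac{(\alpha+2)}{B(W)}\Ree\int|x|^{-b}|W|^{\alpha}W\bar h\dd-\frac{\beta}{A(W)}\Ree\langle W,h\rangle_a-\frac{\sigma_c}{\Mass(W)}\Ree\int W\bar h\dd=0 .
\]

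Next we recover the full complex identity. Replacing $h$ by $ih$ turns the real parts into imaginary parts, because every term is complex-linear in $\bar h$. The identity therefore holds without $\Ree$. Multiplying by $A(W)/\beta$ gives exactly the weak formulation in Definition~\ref{def:weak}, with $\omega=\frac{\sigma_cA(W)}{\beta\Mass(W)}$ and coefficient $\frac{(\alpha+2)A(W)}{\beta B(W)}$ in front of the nonlinearity. This is the stated equation.

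For the last claim, assume $\Qa(W)=0$. By \eqref{eq:EQAB} this means $A(W)=\frac{\beta}{\alpha+2}B(W)$, so the nonlinear coefficient equals $1$ and $W$ solves \eqref{eq:stationary}. The frequency $\omega$ is positive because $\sigma_c>0$ by Lemma~\ref{lem:sigma} and $A(W),\Mass(W)>0$.

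The only point needing care is differentiability and the passage from real to complex test functions. The differentiability is supplied by Lemma~\ref{lem:weighted}(a), and the rest is bookkeeping, so I expect no genuine obstacle.
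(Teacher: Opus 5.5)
Your proposal is correct and follows the paper's proof essentially step for step. Both arguments use the vanishing of the real derivative of $\log J$ at the maximizer and the explicit formulas for $A'$, $\Mass'$ and $B'$. Both then multiply by $A(W)/\beta$, substitute $h\mapsto ih$ to recover the imaginary parts, and observe that $\Qa(W)=0$ makes the nonlinear coefficient equal to $1$.
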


\begin{proof}
The functionals $A$ and $\Mass$ are smooth on $H^1$, and $B$ is of class $C^1$ by
Lemma \ref{lem:weighted}(a). All three are positive at $W\ne0$. Hence $\log J$ is
differentiable at $W$ in the real sense. Since $W$ maximizes $J$ on the open set
$H^1\setminus\{0\}$, this derivative vanishes at $W$. Thus, for every $h\in H^1$,
we have
\[
\frac{B'(W)h}{B(W)}-\frac{\beta}{2}\,\frac{A'(W)h}{A(W)}-\frac{\sigma_c}{2}\,
\frac{\Mass'(W)h}{\Mass(W)}=0 .
\]
With $A'(W)h=2\Ree\langle W,h\rangle_a$, $\Mass'(W)h=2\Ree\int W\bar h$ and
$B'(W)h=(\alpha+2)\Ree\int|x|^{-b}|W|^{\alpha}W\bar h$, multiplying by
$\frac{A(W)}{\beta}$, we get
\[
\Ree\Big(\langle W,h\rangle_a+\frac{\sigma_cA(W)}{\beta\Mass(W)}\int W\bar h\Big)
=\frac{(\alpha+2)A(W)}{\beta B(W)}\Ree\int|x|^{-b}|W|^{\alpha}W\bar h .
\]
Replacing $h$ by $ih$, we obtain the same identity for the imaginary parts. This
is the weak form of the equation. If $\Qa(W)=0$, then
$A(W)=\frac{\beta}{\alpha+2}B(W)$ by \eqref{eq:EQAB}, and the coefficient on the
right-hand side equals $1$.
\end{proof}

Testing Definition \ref{def:weak} with $h=v$ shows that every weak solution of
\eqref{eq:stationary} satisfies the Nehari identity
\begin{equation}\label{eq:nehari}
A(v)+\omega\Mass(v)=B(v).
\end{equation}
The next proposition gives a sharp lower bound for the action on the whole Nehari
set. Its proof uses only \eqref{eq:GN} and an optimization in two variables. In
particular, it does not use the Pohozaev identity (see Remark \ref{rmk:pohozaev}).

\begin{proposition}[The least-action level]\label{prop:leastaction}
Let $\omega>0$ and let $v\in\Ha(\R^{N})\setminus\{0\}$ satisfy \eqref{eq:nehari}.
This is the case, in particular, for every nontrivial solution of
\eqref{eq:stationary}. Then $\Sw(v)=\Theta_{\alpha}B(v)$ with
$\Theta_{\alpha}:=\frac{\alpha}{2(\alpha+2)}>0$, and
\begin{equation}\label{eq:Bbound}
B(v)\ \ge\ B_\omega:=
\Big[\GN\Big(\frac{\beta}{\alpha+2}\Big)^{\frac{\beta}{2}}
\Big(\frac{\sigma_c}{(\alpha+2)\omega}\Big)^{\frac{\sigma_c}{2}}\Big]^{-\frac{2}{\alpha}}.
\end{equation}
Equality holds in \eqref{eq:Bbound} if and only if $v$ is an optimizer of
\eqref{eq:GN} with $\Qa(v)=0$, and in that case
\begin{equation}\label{eq:ABmsol}
A(v)=\frac{\beta}{\alpha+2}B(v),\qquad
\Mass(v)=\frac{\sigma_c}{(\alpha+2)\,\omega}B(v),\qquad
\Sw(v)=\Theta_{\alpha}\,B(v).
\end{equation}
\end{proposition}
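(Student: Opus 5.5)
The plan is to reduce everything to an elementary two-variable optimization. First, the identity $\Sw(v)=\Theta_\alpha B(v)$ follows directly from \eqref{eq:EQAB} and \eqref{eq:nehari}. Writing $\Sw(v)=\frac12\big(A(v)+\omega\Mass(v)\big)-\frac{1}{\alpha+2}B(v)$ and substituting $A+\omega\Mass=B$ gives $\Sw(v)=\big(\frac12-\frac1{\alpha+2}\big)B(v)=\frac{\alpha}{2(\alpha+2)}B(v)$. The claim that nontrivial solutions satisfy \eqref{eq:nehari} is the remark preceding the proposition, obtained by testing Definition \ref{def:weak} with $h=v$.

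For \eqref{eq:Bbound}, set $x:=A(v)>0$ and $y:=\Mass(v)>0$. By \eqref{eq:GN} and the Nehari identity,
\[
x+\omega y=B(v)\le \GN\, x^{\beta/2}y^{\sigma_c/2}.
\]
Next, bound $x^{\beta/2}y^{\sigma_c/2}$ from above in terms of $x+\omega y$. Since $\beta+\sigma_c=\alpha+2$ (Lemma \ref{lem:sigma}), the weighted AM--GM inequality with weights $\frac{\beta}{\alpha+2}$ and $\frac{\sigma_c}{\alpha+2}$ gives
\[
x^{\frac{\beta}{\alpha+2}}(\omega y)^{\frac{\sigma_c}{\alpha+2}}
\le \Big(\frac{\beta}{\alpha+2}\Big)^{\frac{\beta}{\alpha+2}}\Big(\frac{\sigma_c}{\alpha+2}\Big)^{\frac{\sigma_c}{\alpha+2}}(x+\omega y),
\]
which follows by applying AM--GM to $\frac{x}{\beta/(\alpha+2)}$ and $\frac{\omega y}{\sigma_c/(\alpha+2)}$. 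Raising this to the power $\frac{\alpha+2}{2}$ yields
\[
x^{\beta/2}y^{\sigma_c/2}\le \Big(\frac{\beta}{\alpha+2}\Big)^{\beta/2}\Big(\frac{\sigma_c}{(\alpha+2)\omega}\Big)^{\sigma_c/2}B(v)^{\frac{\alpha+2}{2}}.
\]
Combining the two displays gives $B(v)\le \GN\,(\cdots)\,B(v)^{(\alpha+2)/2}$. Dividing by $B(v)>0$, which holds since $v\ne0$, leaves $B(v)^{\alpha/2}\ge[\GN(\cdots)]^{-1}$. This is \eqref{eq:Bbound}.

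For the equality case, observe that equality in \eqref{eq:Bbound} forces equality in both inequalities used. Equality in \eqref{eq:GN} means $v$ is an optimizer. Equality in AM--GM means $\frac{x(\alpha+2)}{\beta}=\frac{\omega y(\alpha+2)}{\sigma_c}$, and combining this with $x+\omega y=B$ gives $x=\frac{\beta}{\alpha+2}B$ and $\omega y=\frac{\sigma_c}{\alpha+2}B$. Thus $\Qa(v)=A-\frac{\beta}{\alpha+2}B=0$, and \eqref{eq:ABmsol} follows.

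Conversely, suppose $v$ is an optimizer satisfying \eqref{eq:nehari} with $\Qa(v)=0$. Then $A=\frac{\beta}{\alpha+2}B$, so Nehari gives $\omega\Mass=\frac{\sigma_c}{\alpha+2}B$, which is exactly the AM--GM equality configuration. Together with equality in \eqref{eq:GN}, every step above becomes an equality, so $B(v)=B_\omega$.

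The only delicate point is keeping the exponent bookkeeping consistent. The proof must use $\beta+\sigma_c=\alpha+2$ so that the right side is a power of $x+\omega y$ alone. The converse also relies on the standing hypothesis that $v$ satisfies \eqref{eq:nehari}.
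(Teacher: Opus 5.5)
Your proposal is correct and follows the paper's proof. You use Nehari to get $\Sw=\Theta_\alpha B$, insert Nehari into \eqref{eq:GN}, and maximize $A^{\beta/2}\Mass^{\sigma_c/2}$ subject to $A+\omega\Mass=B$. The paper does this maximization via strict concavity of $\frac\beta2\log x+\frac{\sigma_c}{2}\log(1-x)$ in the normalized variables $x=A/B$, $1-x=\omega\Mass/B$, which is equivalent to your weighted AM--GM step and gives the same equality case $A=\frac{\beta}{\alpha+2}B$, i.e.\ $\Qa(v)=0$.
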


\begin{proof}
By \eqref{eq:nehari}, we have
$\Sw(v)=\frac12\big(A(v)+\omega\Mass(v)\big)-\frac{B(v)}{\alpha+2}
=\big(\frac12-\frac1{\alpha+2}\big)B(v)=\Theta_\alpha B(v)$. Put
$x:=A(v)/B(v)$ and $y:=\omega\Mass(v)/B(v)$, so that $x,y>0$ and $x+y=1$. Then
$A(v)^{\beta/2}\Mass(v)^{\sigma_c/2}=\omega^{-\sigma_c/2}x^{\beta/2}y^{\sigma_c/2}
B(v)^{\frac{\beta+\sigma_c}{2}}$, and $\beta+\sigma_c=\alpha+2$. Inserting this in
\eqref{eq:GN} and dividing by $B(v)^{\frac{\alpha+2}{2}}$, we get
\[
B(v)^{-\frac{\alpha}{2}}\le\GN\,\omega^{-\frac{\sigma_c}{2}}\,x^{\frac\beta2}y^{\frac{\sigma_c}{2}}.
\]
The function $x\mapsto\frac\beta2\log x+\frac{\sigma_c}{2}\log(1-x)$ is strictly
concave on $(0,1)$ and attains its maximum only at $x=\frac{\beta}{\beta+\sigma_c}
=\frac{\beta}{\alpha+2}$. Therefore
$x^{\beta/2}y^{\sigma_c/2}\le\big(\frac{\beta}{\alpha+2}\big)^{\beta/2}
\big(\frac{\sigma_c}{\alpha+2}\big)^{\sigma_c/2}$, and raising the resulting
inequality to the power $-\frac2\alpha$, which reverses it, we obtain
\eqref{eq:Bbound}. Equality holds exactly when \eqref{eq:GN} is an equality for $v$
and $x=\frac{\beta}{\alpha+2}$; the latter means $A(v)=\frac{\beta}{\alpha+2}B(v)$,
that is, $\Qa(v)=0$. In that case $\omega\Mass(v)=B(v)-A(v)=\frac{\sigma_c}{\alpha+2}B(v)$,
which completes \eqref{eq:ABmsol}.
\end{proof}

\begin{remark}\rm \label{rmk:pohozaev}
For solutions of \eqref{eq:ellipticQ}, the Pohozaev identity $\Qa(v)=0$ is
contained in \cite[Lemma~3.3]{CamposGuzman2021}, where it is combined with the
Nehari identity \eqref{eq:nehari}. For the three-dimensional model, see
\cite{HLW2024}. Formally, it is obtained by pairing the equation with
$x\cdot\nabla v$. A rigorous derivation requires some regularity of $v$ away from
the origin and the control of boundary terms near $0$ and at infinity, as in
\cite{BerestyckiLions1983}. Combined with \eqref{eq:nehari}, it shows that no
nontrivial solution exists for $\omega\le0$. None of our proofs uses this
identity. For ground states, $\Qa=0$ follows directly from Theorem
\ref{thm:level}(ii)--(iii).
\end{remark}

\begin{proof}[Proof of Theorem \ref{thm:level}, part (iii)]
Let $u_c$ be a minimizer of $m_a(c)$. By part (ii) it is an optimizer with
$\Qa(u_c)=0$. Lemma \ref{lem:EL} shows that it solves \eqref{eq:stationary}
with $\omega=\omega_c:=\frac{\sigma_cA(u_c)}{\beta c^{2}}>0$. We compute this
number. From equality in \eqref{eq:GN}, $A(u_c)=\frac{\beta}{\alpha+2}B(u_c)$ and
$\Mass(u_c)=c^{2}$, we get
\[
B(u_c)=\GN\Big(\frac{\beta}{\alpha+2}\Big)^{\frac\beta2}B(u_c)^{\frac\beta2}c^{\sigma_c},
\qquad\text{hence}\qquad
B(u_c)=\Big(\frac{\alpha+2}{\beta}\Big)^{\frac{\beta}{\beta-2}}
\GN^{-\frac{2}{\beta-2}}c^{-\gamma},
\]
where we used $1-\frac\beta2=-\frac{\beta-2}{2}$ and
$\gamma=\frac{2\sigma_c}{\beta-2}$. Since
$\omega_c=\frac{\sigma_c}{(\alpha+2)c^{2}}B(u_c)$ by \eqref{eq:ABmsol}, we obtain
\[
\omega_c=\frac{\sigma_c}{\alpha+2}\Big(\frac{\alpha+2}{\beta}\Big)^{\frac{\beta}{\beta-2}}
\GN^{-\frac{2}{\beta-2}}c^{-\gamma-2},
\]
which is \eqref{eq:omegac}, because $\gamma+2=\frac{4\alpha}{N\alpha+2b-4}$ by Lemma
\ref{lem:sigma}. The same computation applies to any optimizer $v$ with
$\Qa(v)=0$ which solves \eqref{eq:stationary} for some $\omega$. By
\eqref{eq:nehari}, its frequency is necessarily $\omega_{\|v\|_2}$. The map
$c\mapsto\omega_c$ is a negative power of $c$, hence a decreasing bijection of
$(0,\infty)$ onto itself.

By Proposition \ref{prop:leastaction}, $u_c$ achieves equality in
\eqref{eq:Bbound} with $\omega=\omega_c$. Hence $\Sw(u_c)=\Theta_\alpha B_{\omega_c}\le
\Sw(v)$ for every nontrivial solution $v$ at the frequency $\omega_c$. Thus $u_c$
is a ground state, and the least action at frequency $\omega_c$ equals
$\Theta_\alpha B_{\omega_c}$.

Conversely, let $v$ be a ground state at some frequency $\omega>0$. Choose $c'>0$
with $\omega_{c'}=\omega$ and a minimizer $u_{c'}$ of $m_a(c')$. By the previous
paragraph, we have $\Sw(u_{c'})=\Theta_\alpha B_\omega$. So the least action at
frequency $\omega$ is $\Theta_\alpha B_\omega$, and $\Sw(v)=\Theta_\alpha B_\omega$.
Thus $v$ achieves equality in \eqref{eq:Bbound}. Proposition
\ref{prop:leastaction} shows that $v$ is an optimizer with $\Qa(v)=0$, that is,
an optimizer in $V(\|v\|_2)$. By part (ii), it is a minimizer of $m_a(\|v\|_2)$,
and by the remark made above, $\omega=\omega_{\|v\|_2}$.
\end{proof}

\begin{remark}\rm 
Let $\varphi$ be a ground state of \eqref{eq:stationary} with $\omega=1$. By
Theorem \ref{thm:level}(iii) and Proposition \ref{prop:leastaction}, $\varphi$ is
an optimizer of \eqref{eq:GN} and satisfies \eqref{eq:ABmsol} with $\omega=1$,
that is, $A(\varphi)=\frac{\beta}{\alpha+2}B(\varphi)$ and
$\Mass(\varphi)=\frac{\sigma_c}{\alpha+2}B(\varphi)$. These relations coincide
with the Pohozaev identities of \cite[(3.3)--(3.4)]{CamposGuzman2021}, written
with the Nehari identity. Since $\beta+\sigma_c=\alpha+2$, equality in
\eqref{eq:GN} gives
\[
\GN=\frac{B(\varphi)}{A(\varphi)^{\beta/2}\Mass(\varphi)^{\sigma_c/2}}
=\frac{\alpha+2}{\sigma_c}\Big(\frac{\sigma_c}{\beta}\Big)^{\frac{\beta}{2}}
\Mass(\varphi)^{1-\frac{\beta+\sigma_c}{2}}
=\frac{\alpha+2}{\sigma_c}\Big(\frac{\sigma_c}{\beta}\Big)^{\frac{\beta}{2}}
\|\varphi\|_{2}^{-\alpha}.
\]
This is the analogue of Weinstein's formula \cite{Weinstein1983}. It agrees with
\eqref{eq:omegac} for $\omega_c=1$ and with \cite[(2.7)]{BGS2023}. It need not hold for solutions of
\eqref{eq:stationary} that are not ground states, since these need not be
optimizers.
\end{remark}

In particular, ground states exist at every frequency $\omega>0$, and the
number $\Mass(Q)^{\sigma}\Ea(Q)$ does not depend on which ground state $Q$ is
chosen. We now identify the normalized level with the classical threshold.

\begin{proof}[Proof of Theorem \ref{thm:equiv}]
Let $Q$ be a ground state of \eqref{eq:ellipticQ}, that is, of
\eqref{eq:stationary} with $\omega=1$, and put $c_Q:=\|Q\|_{2}$. By Theorem
\ref{thm:level}(iii), $Q$ is a minimizer of $m_a(c_Q)$, so $\Ea(Q)=m_a(c_Q)$.
Since $c\mapsto c^{\gamma}m_a(c)$ is constant, we have
\begin{equation}\label{eq:mcQ}
m_a(c)=\Ea(Q)\Big(\frac{\|Q\|_{2}}{c}\Big)^{\gamma},\qquad c>0 .
\end{equation}
Hence, for $u_0$ of mass $c$, the inequality $\Ea(u_0)<m_a(c)$ is equivalent to
$c^{\gamma}\Ea(u_0)<\|Q\|_{2}^{\gamma}\Ea(Q)$, that is, since $c^\gamma=\Mass(u_0)^{\gamma/2}$,
to $\Mass(u_0)^{\sigma}\Ea(u_0)<\Mass(Q)^{\sigma}\Ea(Q)$. This is
\eqref{eq:equivE}, and $\sigma=\frac{1-s_c}{s_c}$ by Lemma \ref{lem:sigma}. When
$\Ea(u_0)>0$, both sides of \eqref{eq:equivE} are positive (note that
$\Ea(Q)=m_a(c_Q)>0$). Raising them to the power $s_c\in(0,1)$ preserves the
inequality. Since $\sigma s_c=1-s_c$, we obtain the form
$\Mass(u_0)^{1-s_c}\Ea(u_0)^{s_c}<\Mass(Q)^{1-s_c}\Ea(Q)^{s_c}$. Raising to the
power $1/s_c$, we go back.

For \eqref{eq:equivQ}, we introduce
\begin{equation}\label{eq:Astar}
A_*(c):=\frac{2\beta}{\beta-2}m_a(c)
=\Big(\frac{\alpha+2}{\beta\,\GN\,c^{\sigma_c}}\Big)^{\frac{2}{\beta-2}}
=\|Q\|_{\dHa}^{2}\Big(\frac{\|Q\|_{2}}{c}\Big)^{\gamma}.
\end{equation}
The second expression follows from \eqref{eq:levelformula} and \eqref{eq:K}, and
the third from \eqref{eq:mcQ} and $\Ea(Q)=\frac{\beta-2}{2\beta}A(Q)$, which is
\eqref{eq:master} at $t=1$ for $Q\in V(c_Q)$. We first show that
\begin{equation}\label{eq:Qneg}
\Qa(u_0)\le0\quad\Longrightarrow\quad A(u_0)\ge A_*(c).
\end{equation}
Indeed, $\Qa(u_0)\le0$ means $A(u_0)\le\frac{\beta}{\alpha+2}B(u_0)$, and by
\eqref{eq:GN} this gives
$A(u_0)\le\frac{\beta\GN}{\alpha+2}A(u_0)^{\frac\beta2}c^{\sigma_c}$, that is,
$A(u_0)^{1-\frac\beta2}\le\frac{\beta\GN}{\alpha+2}c^{\sigma_c}$. Since
$1-\frac\beta2<0$, raising to the power $-\frac{2}{\beta-2}$ reverses the
inequality, and we get $A(u_0)\ge A_*(c)$.

Assume now that $\Ea(u_0)<m_a(c)$. If $\Qa(u_0)>0$, then \eqref{eq:master} at
$t=1$ gives $\frac{\beta-2}{2\beta}A(u_0)<\Ea(u_0)<m_a(c)$, whence $A(u_0)<A_*(c)$.
Conversely, if $A(u_0)<A_*(c)$, then $\Qa(u_0)>0$ by \eqref{eq:Qneg}. So, below the
level, $\Qa(u_0)>0$ is equivalent to $A(u_0)<A_*(c)$, which by the last expression
in \eqref{eq:Astar} reads $c^{\gamma}A(u_0)<\|Q\|_{2}^{\gamma}\|Q\|_{\dHa}^{2}$,
that is, $\|u_0\|_{2}^{\sigma}\|u_0\|_{\dHa}<\|Q\|_{2}^{\sigma}\|Q\|_{\dHa}$. This is
\eqref{eq:equivQ}.
\end{proof}

\begin{proof}[Proof of Proposition \ref{prop:monotone}]
Let $\aH<a_1<a_2$, and denote by $A_j$, $J_j$ and $\mathcal{C}_{a_j}$ the
quantities $A$, $J$ and $\GN$ computed with $a=a_j$. For $u\in\Ha\setminus\{0\}$,
we have
\[
A_1(u)=A_2(u)-(a_2-a_1)\int\frac{|u|^{2}}{|x|^{2}}\dd<A_2(u),
\]
the integral being finite by \eqref{eq:hardy} and positive. Since $\beta>0$, we
infer that $J_1(u)>J_2(u)$. By Proposition \ref{prop:GN}, the constant
$\mathcal{C}_{a_2}$ is attained at some $W$, and then
$\mathcal{C}_{a_1}\ge J_1(W)>J_2(W)=\mathcal{C}_{a_2}$. Since $\frac{2}{\beta-2}>0$
and the constants $K_{\alpha,N,b}$ and $\Lambda_{\alpha,N,b}$ in
\eqref{eq:levelformula} and \eqref{eq:omegac} do not depend on $a$, both
$m_a(c)$ and $\omega_c$ are strictly increasing functions of $a$.
\end{proof}

\begin{remark}\rm \label{rmk:monotoneset}
Proposition \ref{prop:monotone} concerns the threshold value. It does not by
itself imply that the set of data satisfying \eqref{eq:below} and $\Qa(u_0)>0$
grows with $a$, because the energy $\Ea(u_0)$ increases with $a$ as well. We do
not know whether this set is monotone in $a$.
\end{remark}

%%%%%%%%%%%%%%%%%%%%%%%%%%%%%%%%%%%%%%%%%%%%%%%%%%%%%%%%%%%%%%%%%%%%%%%%%%%%%%
\section{The dynamical dichotomy below the normalized level}\label{sec:dynamics}
%%%%%%%%%%%%%%%%%%%%%%%%%%%%%%%%%%%%%%%%%%%%%%%%%%%%%%%%%%%%%%%%%%%%%%%%%%%%%%

The passage from the variational analysis to the dynamics goes through the
following coercivity statement, which is once more a consequence of
\eqref{eq:master}. It holds pointwise in time, with constants that are explicit in
the energy gap. Hence no continuity argument is needed to derive it.

\begin{lemma}[Coercivity below the level]\label{lem:coercivity}
Let $N\ge3$, $a>\aH$, $0<b<2$, assume \eqref{eq:range}, and let $c>0$ and
$u\in S(c)$.
\begin{enumerate}
\item[(i)] If $\Qa(u)<0$, then
\begin{equation}\label{eq:coerneg}
\Qa(u)<\beta\big(\Ea(u)-m_a(c)\big).
\end{equation}
\item[(ii)] If $\Qa(u)>0$, then
\begin{equation}\label{eq:coerpos}
0<\frac{\beta-2}{2\beta}\,\|u\|_{\dHa}^{2}<\Ea(u).
\end{equation}
\item[(iii)] If $\Ea(u)<m_a(c)$, then $\Qa(u)\ne0$.
\item[(iv)] If $\Qa(u)>0$ and $\Ea(u)\le(1-\eta)m_a(c)$ for some $\eta\in(0,1)$,
then, with $A_*(c)$ as in \eqref{eq:Astar},
\begin{equation}\label{eq:unifcoer}
\|u\|_{\dHa}^{2}<(1-\eta)A_*(c)
\qquad\text{and}\qquad
\Qa(u)\ge\Big(1-(1-\eta)^{\frac{\beta-2}{2}}\Big)\|u\|_{\dHa}^{2}.
\end{equation}
\end{enumerate}
\end{lemma}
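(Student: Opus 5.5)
All four parts will be read off the fibering analysis of Lemma \ref{lem:fibering}, mainly the identity \eqref{eq:master} and the facts that $t_u<1$ iff $\Qa(u)<0$ and $t_u>1$ iff $\Qa(u)>0$. We also use $\Ea(u^{t_u})\ge m_a(c)$ for every $u\in S(c)$, which is Lemma \ref{lem:minimax}. Throughout, $A=A(u)$.

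For (i), assume $\Qa(u)<0$, so $t_u<1$. Applying \eqref{eq:master} at $t=1$ and at $t=t_u$ (where $\Qa(u^{t_u})=0$) gives
\[
\Ea(u)-\tfrac1\beta\Qa(u)=\tfrac{\beta-2}{2\beta}A>\tfrac{\beta-2}{2\beta}t_u^2A=\Ea(u^{t_u})\ge m_a(c).
\]
The strict inequality holds because $t_u<1$ and $A>0$. Rearranging gives $\Qa(u)<\beta(\Ea(u)-m_a(c))$. The same chain yields $\frac{\beta-2}{2\beta}A>m_a(c)$, as used in the remark after Theorem \ref{thm:equiv}.

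For (ii), assume $\Qa(u)>0$. Then \eqref{eq:master} at $t=1$ gives $\Ea(u)=\frac{\beta-2}{2\beta}A+\frac1\beta\Qa(u)>\frac{\beta-2}{2\beta}A>0$.

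For (iii), if $\Qa(u)=0$ then $u\in V(c)$, so $\Ea(u)\ge m_a(c)$ by the definition \eqref{eq:mc}.

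For (iv), the first bound follows from (ii): $\frac{\beta-2}{2\beta}A<\Ea(u)\le(1-\eta)m_a(c)$. Multiplying by $\frac{2\beta}{\beta-2}$ gives $A<(1-\eta)A_*(c)$. For the second bound, write $\Qa(u)=A-\frac{\beta}{\alpha+2}B$ and bound $B\le\GN A^{\beta/2}c^{\sigma_c}$ by \eqref{eq:GN}. This gives
\[
\Qa(u)\ge A\Big(1-\tfrac{\beta\GN c^{\sigma_c}}{\alpha+2}A^{\frac{\beta-2}{2}}\Big).
\]
By the second expression in \eqref{eq:Astar}, $\frac{\beta\GN c^{\sigma_c}}{\alpha+2}=A_*(c)^{-\frac{\beta-2}{2}}$, so the bracket equals $1-(A/A_*(c))^{\frac{\beta-2}{2}}$. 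Since $A/A_*(c)<1-\eta$ and $\frac{\beta-2}{2}>0$, the bracket exceeds $1-(1-\eta)^{\frac{\beta-2}{2}}$, which is the claim.

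The only step needing care is the exponent bookkeeping that identifies $\frac{\beta\GN c^{\sigma_c}}{\alpha+2}$ with $A_*(c)^{-(\beta-2)/2}$. It follows directly from \eqref{eq:Astar}, which was already checked in the proof of Theorem \ref{thm:equiv}, so I expect no genuine obstacle.
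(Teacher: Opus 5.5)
Your proposal is correct and follows the paper's proof essentially step for step. Parts (i)--(iii) come from \eqref{eq:master} at $t=1$ and $t=t_u$ together with $u^{t_u}\in V(c)$, and (iv) from (ii), \eqref{eq:GN} and the second expression in \eqref{eq:Astar}; the only cosmetic difference is that in (i) you get strictness from $t_u<1$ and $A(u)>0$, whereas the paper divides by $t_u^{2}$ and uses $m_a(c)>0$.
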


\begin{proof}
Assertion (iii) is immediate: $\Qa(u)=0$ places $u$ in $V(c)$, and we get
$\Ea(u)\ge m_a(c)$ by \eqref{eq:mc}. Assertion (ii) is \eqref{eq:master} at $t=1$,
namely $\Ea(u)=\frac{\beta-2}{2\beta}A(u)+\frac1\beta\Qa(u)$, together with
$\Qa(u)>0$ and $A(u)>0$.

For (i), let $t_u$ be the maximum point of the fibering map given by Lemma
\ref{lem:fibering}. Since $\Qa(u)<0$, we have $t_u<1$. Since $u^{t_u}\in V(c)$,
we have $\Ea(u^{t_u})\ge m_a(c)$. On the other hand, \eqref{eq:master} at $t=t_u$
gives $\Ea(u^{t_u})=\frac{\beta-2}{2\beta}t_u^{2}A(u)$. Combining the two and using
$t_u<1$ and $m_a(c)>0$, we obtain
\[
\frac{\beta-2}{2\beta}A(u)=\frac{\Ea(u^{t_u})}{t_u^{2}}\ \ge\ \frac{m_a(c)}{t_u^{2}}\ >\ m_a(c).
\]
Writing \eqref{eq:master} at $t=1$ as
$\Qa(u)=\beta\big(\Ea(u)-\frac{\beta-2}{2\beta}A(u)\big)$ we obtain
$\Qa(u)<\beta\big(\Ea(u)-m_a(c)\big)$.

For (iv), assertion (ii) gives $\frac{\beta-2}{2\beta}A(u)<\Ea(u)\le(1-\eta)m_a(c)$.
This is the first inequality in \eqref{eq:unifcoer}, by the definition of
$A_*(c)$. For the second one, we use \eqref{eq:GN} and the second expression in
\eqref{eq:Astar}, which says that
$\frac{\beta\GN}{\alpha+2}c^{\sigma_c}=A_*(c)^{-\frac{\beta-2}{2}}$. We get
\[
\Qa(u)=A(u)-\frac{\beta}{\alpha+2}B(u)\ge A(u)-\frac{\beta\GN}{\alpha+2}c^{\sigma_c}A(u)^{\frac\beta2}
=A(u)\Big(1-\Big(\frac{A(u)}{A_*(c)}\Big)^{\frac{\beta-2}{2}}\Big),
\]
and the bracket is at least $1-(1-\eta)^{\frac{\beta-2}{2}}$.
\end{proof}

\begin{lemma}[Invariance of the sign]\label{lem:invariance}
Let $N\ge3$, $a>\aH$, $0<b<2$, assume \eqref{eq:range} and the first part of
Assumption \ref{ass:cauchy}. Let
$u_0\in\Ha\setminus\{0\}$ satisfy \eqref{eq:below}, and let $u$ be the maximal
solution. Then $\Qa(u(t))\ne0$ for every $t\in(-T_-,T_+)$, and the sign of
$\Qa(u(t))$ is constant. If $\Qa(u_0)>0$, the solution is global,
$0<\Ea(u_0)<m_a(c)$, and
\begin{equation}\label{eq:unifbound}
\sup_{t\in\R}\|u(t)\|_{\dHa}^{2}\le\frac{2\beta}{\beta-2}\Ea(u_0)<A_*(c),
\qquad
\inf_{t\in\R}\frac{\Qa(u(t))}{\|u(t)\|_{\dHa}^{2}}\ge1-\Big(\frac{\Ea(u_0)}{m_a(c)}\Big)^{\frac{\beta-2}{2}}>0 .
\end{equation}
If $\Qa(u_0)<0$, then
\begin{equation}\label{eq:unifneg}
\Qa(u(t))<\beta\big(\Ea(u_0)-m_a(c)\big)<0
\qquad\text{for all }t\in(-T_-,T_+).
\end{equation}
\end{lemma}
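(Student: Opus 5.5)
By conservation of mass and energy, every $u(t)$ lies in $S(c)$ and satisfies $\Ea(u(t))=\Ea(u_0)<m_a(c)$. Lemma \ref{lem:coercivity}(iii) therefore gives $\Qa(u(t))\ne0$ for every $t\in(-T_-,T_+)$. To see that the sign is constant, I will show that $t\mapsto\Qa(u(t))$ is continuous. The map $t\mapsto u(t)\in\Ha$ is continuous by the first part of Assumption \ref{ass:cauchy}. The functional $A$ is a continuous quadratic form on $H^1$, and $B$ is continuous by Lemma \ref{lem:weighted}(a). So $\Qa(u(t))=A(u(t))-\frac{\beta}{\alpha+2}B(u(t))$ is a continuous nonvanishing function on an interval, and the intermediate value theorem fixes its sign.

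\emph{Case $\Qa(u_0)>0$.} For each $t$ we have $\Qa(u(t))>0$, and Lemma \ref{lem:coercivity}(ii) gives
\[
\frac{\beta-2}{2\beta}\|u(t)\|_{\dHa}^2<\Ea(u(t))=\Ea(u_0).
\]
This shows at once that $\Ea(u_0)>0$, and hence $0<\Ea(u_0)<m_a(c)$. It also gives the uniform bound $\|u(t)\|_{\dHa}^2\le\frac{2\beta}{\beta-2}\Ea(u_0)$. By \eqref{eq:Astar} the right-hand side is strictly smaller than $A_*(c)=\frac{2\beta}{\beta-2}m_a(c)$. Since the $\dHa$ norm stays bounded, the blow-up alternative forces $T_\pm=\infty$.

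For the lower bound on the ratio, I set $1-\eta:=\Ea(u_0)/m_a(c)\in(0,1)$. Lemma \ref{lem:coercivity}(iv) then applies at each time and gives
\[
\Qa(u(t))\ge\Big(1-(1-\eta)^{\frac{\beta-2}{2}}\Big)\|u(t)\|_{\dHa}^2.
\]
Taking the infimum over $t$ yields \eqref{eq:unifbound}. Alternatively, one can repeat directly the computation in the proof of (iv), using $A(u(t))/A_*(c)\le\Ea(u_0)/m_a(c)$.

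\emph{Case $\Qa(u_0)<0$.} Here $\Qa(u(t))<0$ for all $t$. Lemma \ref{lem:coercivity}(i) applied to $u(t)\in S(c)$ gives
\[
\Qa(u(t))<\beta\big(\Ea(u(t))-m_a(c)\big)=\beta\big(\Ea(u_0)-m_a(c)\big),
\]
and the right-hand side is negative. This is \eqref{eq:unifneg}.

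No step is a real obstacle. The one point that needs care is the continuity of $\Qa$ along the flow, which rests on the $C^0$-in-$H^1$ regularity of the solution and on the Lipschitz bound for $B$ in Lemma \ref{lem:weighted}(a). Note that only the first part of Assumption \ref{ass:cauchy} is used.
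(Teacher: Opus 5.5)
Your proposal is correct and follows the paper's proof step for step. Both arguments use conservation of mass and energy with Lemma~\ref{lem:coercivity}(iii) for $\Qa(u(t))\ne0$, and continuity of $A$ and $B$ on $\Ha$ for the constant sign. They then use Lemma~\ref{lem:coercivity}(ii) with the blow-up alternative for globality, part (iv) with $1-\eta=\Ea(u_0)/m_a(c)$ for the ratio bound in \eqref{eq:unifbound}, and part (i) for \eqref{eq:unifneg}.
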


\begin{proof}
By conservation of mass and energy, we have $u(t)\in S(c)$ and
$\Ea(u(t))=\Ea(u_0)<m_a(c)$ for every $t$ in the maximal interval. Lemma
\ref{lem:coercivity}(iii) gives $\Qa(u(t))\ne0$. The map $t\mapsto\Qa(u(t))$ is
continuous, because $u\in C((-T_-,T_+);\Ha)$ and $A$ and $B$ are continuous on
$\Ha$ (Lemma \ref{lem:weighted}). A continuous real function without zeros on an
interval has constant sign.

If $\Qa(u_0)>0$, then $\Qa(u(t))>0$ for all $t$, and Lemma \ref{lem:coercivity}(ii)
together with the conservation of energy gives $\Ea(u_0)>0$ and
$\frac{\beta-2}{2\beta}\|u(t)\|^{2}_{\dHa}<\Ea(u_0)$. Thus $\|u(t)\|_{\dHa}$
stays bounded, and $T_\pm=\infty$ by the blow-up alternative. Applying Lemma
\ref{lem:coercivity}(iv) with $1-\eta=\Ea(u_0)/m_a(c)$, we obtain
\eqref{eq:unifbound}. If $\Qa(u_0)<0$, then $\Qa(u(t))<0$ for all $t$. Applying
\eqref{eq:coerneg} to $u(t)\in S(c)$, we get \eqref{eq:unifneg}.
\end{proof}

For the scattering half, we use the scattering theorem of Hajaiej, Luo and Wang.
In view of Remark \ref{rmk:exponent}, we state it in scale-invariant form. This
is the one point of the paper where we rely on a result whose hypotheses, as
published, differ from the form in which we use it.

\begin{proposition}[Scattering criterion, {\cite[Theorem~1.1]{HLW2024}}]\label{prop:scatcrit}
Assume \eqref{eq:dynrange}, and let $Q$ be a ground state of
\eqref{eq:ellipticQ}. If $u_0\in\Ha(\R^3)\setminus\{0\}$ satisfies
\[
\Mass(u_0)^{\sigma}\Ea(u_0)<\Mass(Q)^{\sigma}\Ea(Q)
\qquad\text{and}\qquad
\|u_0\|_{2}^{\sigma}\|u_0\|_{\dHa}<\|Q\|_{2}^{\sigma}\|Q\|_{\dHa},
\]
then the solution of \eqref{eq:main} is global and scatters in $\Ha(\R^3)$.
\end{proposition}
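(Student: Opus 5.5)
The plan is to reduce the scale-invariant hypotheses to the literal hypotheses of \cite[Theorem~1.1]{HLW2024} by means of the scaling \eqref{eq:scaling}, applied to the datum. At equal mass the exponent on $\Mass$ plays no role, so the two formulations coincide there.

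\emph{Step 1: the scaling.} For $\mu>0$ set $u_{0,\mu}(x):=\mu^{\frac{2-b}{\alpha}}u_0(\mu x)$. A change of variables gives
\[
\Mass(u_{0,\mu})=\mu^{-2s_c}\Mass(u_0),\qquad
A(u_{0,\mu})=\mu^{2-2s_c}A(u_0),\qquad
B(u_{0,\mu})=\mu^{2-2s_c}B(u_0).
\]
The last two use the homogeneity of $|x|^{-2}$ and $|x|^{-b}$. Hence $\Ea(u_{0,\mu})=\mu^{2-2s_c}\Ea(u_0)$. With $\sigma=\frac{1-s_c}{s_c}$ from Lemma \ref{lem:sigma}, the quantities $\Mass^{\sigma}\Ea$ and $\|\cdot\|_2^{\sigma}\|\cdot\|_{\dHa}$ are therefore invariant under $u\mapsto u_\mu$. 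Since $s_c>0$, we can choose $\mu$ so that $\Mass(u_{0,\mu})=\Mass(Q)$.

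\emph{Step 2: transfer of the hypotheses.} For this $\mu$, the two assumed inequalities, read at equal mass, become $\Ea(u_{0,\mu})<\Ea(Q)$ and $\|u_{0,\mu}\|_{\dHa}<\|Q\|_{\dHa}$. Multiplying by $\Mass(Q)$, respectively by $\|Q\|_2$, gives exactly the exponent-one conditions \eqref{eq:HR}-type of \cite[Theorem~1.1]{HLW2024}, with $\Mass(u_{0,\mu})\Ea(u_{0,\mu})<\Mass(Q)\Ea(Q)$ and the analogous kinetic product. If the ground state used in \cite{HLW2024} differs from our $Q$, this does not matter. Indeed, by Theorem \ref{thm:level}(iii) and the remark following its proof, every ground state of \eqref{eq:ellipticQ} is an optimizer with the same mass, $\dHa$ norm and energy. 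So the thresholds agree.

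\emph{Step 3: apply the cited theorem and undo the scaling.} Apply \cite[Theorem~1.1]{HLW2024} to $u_{0,\mu}$; its solution $v$ is global and scatters in $\Ha(\R^3)$. By uniqueness, $u(t,x)=\mu^{-\frac{2-b}{\alpha}}v(\mu^{-2}t,\mu^{-1}x)$ is the solution with datum $u_0$. Since $\Lc$ is homogeneous of degree $-2$ under dilations, $e^{-it\Lc}$ commutes with the scaling up to the time change $t\mapsto\mu^{2}t$. Moreover, for fixed $\mu$ the dilation is a bounded isomorphism of $\Ha$. Therefore the scattering states of $v$, rescaled, give $u_\pm$ with $\|u(t)-e^{-it\Lc}u_\pm\|_{\Ha}\to0$.

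\emph{Main obstacle.} The arithmetic above is routine. The genuine issue is Step 3: one must check that the proof in \cite{HLW2024} actually delivers scattering under the literal hypotheses when the mass equals $\Mass(Q)$. In other words, the coercivity it extracts from \eqref{eq:GN} has to be valid at that mass, and Remark \ref{rmk:exponent} shows that the literal statement cannot hold at other masses. I would verify this by tracing their coercivity lemma. At $\Mass(u_0)=\Mass(Q)$ it should reduce to Lemma \ref{lem:coercivity}(iv) through Theorem \ref{thm:equiv}, and this would confirm that the only input they need is the uniform bound \eqref{eq:unifbound}. The attractive range $-\frac14<a<0$ is also taken from \cite{HLW2024} without change.
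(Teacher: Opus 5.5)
Your rescaling in Steps 1--3 is correct, and it is a different route from the paper's. At mass $\Mass(Q)$ the exponent on $\Mass$ is immaterial, so $u_{0,\mu}$ satisfies the hypotheses of \cite[Theorem~1.1]{HLW2024} verbatim. Scattering then transfers back, since $e^{-it\Lc}$ intertwines with the dilation. The paper does not rescale. It stays at mass $c$ and supplies, in scale-invariant form, the bounds that the virial--Morawetz scheme of \cite{HLW2024} uses. As you note, the published theorem fails at general mass (Remark \ref{rmk:exponent}), so Step 3 cannot cite it as a black box. The trouble is that your plan for closing this is incomplete.

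You propose to check only that the coercivity in \cite{HLW2024} at $\Mass(u_0)=\Mass(Q)$ reduces to Lemma \ref{lem:coercivity}(iv), and you conclude that \eqref{eq:unifbound} is the only input. But the scheme also applies coercivity to spatial truncations $\chi u(t)$. These have mass at most, and in general strictly less than, $\Mass(Q)$. Rescaling the datum therefore does not put them on the sphere where the literal and scale-invariant conditions agree.

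Below that mass, the exponent-one coercivity is genuinely false. For $\omega>1$, the function $Q_\omega$ has mass $\omega^{-s_c}\Mass(Q)<\Mass(Q)$ and $\|Q_\omega\|_2\|Q_\omega\|_{\dHa}=\omega^{\frac{1-2s_c}{2}}\|Q\|_2\|Q\|_{\dHa}$, which is an arbitrarily small fraction of the threshold. Yet $\Qa(Q_\omega)=0$. So any step that deduces $\Qa(\chi u)\gtrsim\|\chi u\|_{\dHa}^{2}$ from a bound on the product $\|\chi u\|_2\|\chi u\|_{\dHa}$ is unjustified, even after your rescaling.

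The missing ingredient is a truncation estimate in scale-invariant form. The paper obtains it from \eqref{eq:QaMB}, using two facts.
\begin{enumerate}
\item The quantity $\Mass(v)^{\sigma}B(v)$ does not increase under multiplication by a real cut-off $0\le\chi\le1$.
\item By \eqref{eq:unifbound} and \eqref{eq:GN}, $\Mass(u(t))^{\sigma}B(u(t))\le(\Ea(u_0)/m_a(c))^{\beta/2}\Mass(Q)^{\sigma}B(Q)$.
\end{enumerate}
Together these give $\Qa(\chi u(t))\ge\big(1-(\Ea(u_0)/m_a(c))^{\frac{\beta-2}{2}}\big)\|\chi u(t)\|_{\dHa}^{2}$, uniformly in $t$ and $\chi$. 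Once you have this estimate, and Lemma \ref{lem:invariance} for the flow bounds directly at mass $c$, the rescaling is no longer needed.
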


Both conditions are invariant under \eqref{eq:scaling}, and no sign condition on
the energy is needed. By the last expression in \eqref{eq:Astar}, the second
condition says that $\|u_0\|_{\dHa}^{2}<A_*(c)$ with $c=\|u_0\|_2$. By
\eqref{eq:Qneg}, this gives $\Qa(u_0)>0$, and then $\Ea(u_0)>0$ by
\eqref{eq:coerpos}. When $\Ea(u_0)>0$, the first condition is equivalent to
$\Mass(u_0)^{1-s_c}\Ea(u_0)^{s_c}<\Mass(Q)^{1-s_c}\Ea(Q)^{s_c}$. This is the form
used in \cite{FarahGuzman2017,MiaoMurphyZheng2021,CamposGuzman2021}.

The proof in \cite{HLW2024} follows the virial--Morawetz scheme of
\cite{DodsonMurphy2017,CamposCardoso2022}. In this scheme, the threshold
hypotheses are used through uniform bounds along the flow: a bound on
$\|u(t)\|_{\dHa}$ strictly below the ground-state value, and a coercivity
estimate of the type \eqref{eq:unifbound}, applied also to spatial truncations of
$u(t)$. Written in terms of $\Mass^{\sigma}\Ea$, these bounds are invariant under
\eqref{eq:scaling}. Below the level, Lemma \ref{lem:invariance} provides the first
two bounds directly in terms of $m_a(c)$. The coercivity of the truncations is
proved below; see \eqref{eq:QaMB} and the estimate that follows it.

The formulation of \cite{BGS2023} through the potential energy is well suited to
truncations. Let $Q$ be a ground state of \eqref{eq:ellipticQ};
by Theorem \ref{thm:level} it is an optimizer of \eqref{eq:GN} with $\Qa(Q)=0$.
For $v\in\Ha\setminus\{0\}$, we write $B(v)=B(v)^{2/\beta}B(v)^{(\beta-2)/\beta}$,
bound the first factor by \eqref{eq:GN}, and use $\sigma(\beta-2)=\sigma_c$, which
is \eqref{eq:sigmaid}. We obtain
$B(v)\le\GN^{2/\beta}\big(\Mass(v)^{\sigma}B(v)\big)^{\frac{\beta-2}{\beta}}A(v)$.
Every step is an equality for $v=Q$, and $B(Q)/A(Q)=\frac{\alpha+2}{\beta}$
because $\Qa(Q)=0$. Eliminating $\GN$, we obtain
\begin{equation}\label{eq:QaMB}
\frac{\beta}{\alpha+2}\,B(v)\le
\Big(\frac{\Mass(v)^{\sigma}B(v)}{\Mass(Q)^{\sigma}B(Q)}\Big)^{\frac{\beta-2}{\beta}}A(v),
\qquad\text{hence}\qquad
\Qa(v)\ge\Big(1-\Big(\frac{\Mass(v)^{\sigma}B(v)}{\Mass(Q)^{\sigma}B(Q)}\Big)^{\frac{\beta-2}{\beta}}\Big)A(v).
\end{equation}
Now let $u$ be as in Lemma \ref{lem:invariance} with $\Qa(u_0)>0$, and put
$c=\|u_0\|_{2}$. By \eqref{eq:unifbound} and the first expression of $A_*(c)$ in
\eqref{eq:Astar}, we have $A(u(t))\le\frac{\Ea(u_0)}{m_a(c)}A_*(c)$ for all $t$.
Inserting this in \eqref{eq:GN}, and using the last expression of $A_*(c)$ in
\eqref{eq:Astar}, the identity $\frac{\gamma\beta}{2}=\gamma+\sigma_c$ and equality
in \eqref{eq:GN} for $Q$, we get
\[
\Mass(u(t))^{\sigma}B(u(t))\le\GN\Big(\frac{\Ea(u_0)}{m_a(c)}\Big)^{\frac\beta2}
c^{\gamma+\sigma_c}A_*(c)^{\frac\beta2}
=\Big(\frac{\Ea(u_0)}{m_a(c)}\Big)^{\frac\beta2}\Mass(Q)^{\sigma}B(Q),
\qquad t\in\R .
\]
The quantity $\Mass(v)^{\sigma}B(v)$ does not increase when $v$ is multiplied by
a smooth real cut-off $0\le\chi\le1$. Therefore, by \eqref{eq:QaMB}, we have, for
every such $\chi$ and every $t\in\R$,
\[
\Qa\big(\chi u(t)\big)\ge\Big(1-\Big(\frac{\Ea(u_0)}{m_a(c)}\Big)^{\frac{\beta-2}{2}}\Big)
\big\|\chi u(t)\big\|_{\dHa}^{2},
\]
with the same constant as in \eqref{eq:unifbound}. This is the coercivity of
truncations required above. The same bound shows that the hypothesis
$\sup_t\Mass(u(t))^{\sigma}B(u(t))<\Mass(\varphi)^{\sigma}B(\varphi)$ of
\cite[Theorem~2.2]{BGS2023} holds below $m_a(c)$ when $\Qa(u_0)>0$.

\begin{proof}[Proof of Theorem \ref{thm:dichotomy}]
By Assumption \ref{ass:cauchy}, Lemma \ref{lem:invariance} applies. Hence
$\Qa(u_0)\ne0$, and the sign of $\Qa(u(t))$ is conserved. If $\Qa(u_0)>0$, the
solution is global by Lemma \ref{lem:invariance}.

\emph{The scattering regime.} Assume $\Qa(u_0)>0$ and \eqref{eq:dynrange}. By
Theorem \ref{thm:equiv}, hypothesis \eqref{eq:below} and $\Qa(u_0)>0$ are exactly
the two conditions of Proposition \ref{prop:scatcrit}. Hence the solution
scatters.

\emph{The blow-up regime.} Assume $\Qa(u_0)<0$ and $u_0\in\Sigma$. By Assumption
\ref{ass:cauchy} the solution stays in $\Sigma$, the variance
$V(t)=\int|x|^{2}|u(t,x)|^{2}\dd$ is of class $C^{2}$, and $V''(t)=8\Qa(u(t))$ by
Lemma \ref{lem:virial}. Inserting \eqref{eq:unifneg}, we get \eqref{eq:blowup}:
\[
V''(t)\le8\beta\big(\Ea(u_0)-m_a(c)\big)=:-2\delta<0,
\qquad t\in(-T_-,T_+).
\]
Integrating twice, we obtain $V(t)\le V(0)+V'(0)\,t-\delta t^{2}$ on the maximal
interval.
The right-hand side becomes negative for $|t|$ large, while $V(t)\ge0$. Hence
$T_+<\infty$ and $T_-<\infty$, and by the blow-up alternative
$\|u(t)\|_{\dHa}\to\infty$ as $t\uparrow T_+$ and as $t\downarrow-T_-$.

\emph{Sharpness.} Let $u_c$ be a minimizer of $m_a(c)$, which exists by Theorem
\ref{thm:level}(ii). Then $u_c\in V(c)$, $\Ea(u_c)=m_a(c)$, $u_c\in\Sigma$ by
Proposition \ref{prop:decay} below, and $e^{i\omega_ct}u_c$ solves
\eqref{eq:main} by Theorem \ref{thm:level}(iii). It belongs to $C(\R;\Ha)\cap C^1(\R;H^{-1})$. By the
uniqueness contained in Assumption \ref{ass:cauchy}, it is the solution with data
$u_c$. This solution is
global but does not scatter. Indeed, put $r:=\frac{2N}{N-1}$, so that $(4,r)$ is
admissible and $H^{1}(\R^N)\subset L^{r}(\R^N)$. Suppose that
$\|u(t)-e^{-it\Lc}u_+\|_{\Ha}\to0$ as $t\to\infty$ for some $u_+\in\Ha$. Then
there is $T>0$ with
$\|e^{-it\Lc}u_+\|_{r}\ge\|u_c\|_{r}-\|u(t)-e^{-it\Lc}u_+\|_{r}\ge\frac12\|u_c\|_{r}>0$
for $t\ge T$, so that
$\|e^{-it\Lc}u_+\|_{L^4_tL^r_x([T,\infty)\times\R^N)}=\infty$. This contradicts
Proposition \ref{prop:strichartz} with $F=0$. The same argument applies as
$t\to-\infty$. Therefore the strict inequality in \eqref{eq:below} cannot be
replaced by a non-strict one.
\end{proof}

\begin{remark}\rm \label{rmk:comparison}
Three features of this argument are worth pointing out. First, the dichotomy is
exhaustive without further hypotheses, because the case $\Qa(u_0)=0$ is excluded by
the definition of the level itself. Second, the blow-up part uses no information
on the ground state beyond the number $m_a(c)$; in particular it does not use the
uniqueness of $Q$, which to our knowledge is open when $a\ne0$ and $b\ne0$. Third,
\eqref{eq:unifneg} is uniform in time with an explicit constant. Solving
$V(0)+V'(0)t-\delta t^{2}=0$, we obtain the bound
\[
\max\{T_+,T_-\}\ \le\
\frac{\big|V'(0)\big|+\sqrt{V'(0)^{2}+4\delta V(0)}}{2\delta},
\qquad \delta=4\beta\big(m_a(c)-\Ea(u_0)\big),
\]
on the blow-up times. This upper bound deteriorates as the energy approaches the
level, like $\delta^{-1}$ in general and like $\delta^{-1/2}$ when $V'(0)=0$. It
says nothing about the actual blow-up time of solutions close to the level.
\end{remark}

%%%%%%%%%%%%%%%%%%%%%%%%%%%%%%%%%%%%%%%%%%%%%%%%%%%%%%%%%%%%%%%%%%%%%%%%%%%%%%
\section{Decay of solutions of the stationary problem and strong instability}\label{sec:instability}
%%%%%%%%%%%%%%%%%%%%%%%%%%%%%%%%%%%%%%%%%%%%%%%%%%%%%%%%%%%%%%%%%%%%%%%%%%%%%%

To apply the blow-up part of Theorem \ref{thm:dichotomy} to data close to a
normalized ground state, we need to know that such a state has finite variance.
We prove more, namely exponential decay in an integrated sense. The proof is an
Agmon-type argument performed directly on the weak formulation. It avoids any
discussion of the regularity of solutions near the singularity.

\begin{proposition}[Exponential decay]\label{prop:decay}
Let $N\ge3$, $a>\aH$, $0<b<2$ and $0<\alpha<\frac{4-2b}{N-2}$, and let
$v\in\Ha(\R^{N})$ be a weak solution of \eqref{eq:stationary} with $\omega>0$. Put
$\kappa:=\min\{\lambda_a,\omega\}$, with $\lambda_a$ as in \eqref{eq:equivnorm}.
There is $R\ge1$, depending on $v$, such that for every $\eps\in(0,\frac12\sqrt\kappa]$,
\begin{equation}\label{eq:decay}
\int_{\R^N}e^{2\eps|x|}\big(|\nabla v|^{2}+|v|^{2}\big)\dd
\ \le\ C_\eps\,\frac{e^{2\eps R}}{\kappa}\,B(v),
\end{equation}
with $C_\eps$ depending only on $\eps$. In particular $v\in\Sigma$.
\end{proposition}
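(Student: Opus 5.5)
We will run an Agmon-type argument directly on the weak formulation of Definition \ref{def:weak}, testing it with $h=\phi^2 v$ for a bounded, Lipschitz weight $\phi$. Fix $\eps\in(0,\frac12\sqrt\kappa]$ and, for a truncation parameter $L\ge R$, set $\phi_L(x):=e^{\eps\min\{|x|,L\}}$. It is bounded and Lipschitz with $|\nabla\phi_L|\le\eps\phi_L$, so $\phi_L^2v\in H^1$ is admissible and $\phi_Lv\in H^1$. The identity
\[
\Ree\langle v,\phi^2v\rangle_a=A(\phi v)-\int|\nabla\phi|^2|v|^2\dd
\]
holds because $\nabla(\phi v)\cdot\nabla\overline{(\phi v)}-|v|^2|\nabla\phi|^2=\Ree\big(\nabla v\cdot\nabla\overline{(\phi^2v)}\big)$ pointwise, and the potential term is unchanged. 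Taking the real part of the weak equation therefore gives
\[
A(\phi_Lv)+\omega\|\phi_Lv\|_2^2-\int|\nabla\phi_L|^2|v|^2\dd=\int\phi_L^2|x|^{-b}|v|^{\alpha+2}\dd .
\]
By \eqref{eq:equivnorm}, the left side is at least $\lambda_a\|\nabla(\phi_Lv)\|_2^2+(\omega-\eps^2)\|\phi_Lv\|_2^2\ge\kappa\|\nabla(\phi_Lv)\|_2^2+\frac34\kappa\|\phi_Lv\|_2^2$, since $\eps^2\le\kappa/4$.

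Next we split the right side at radius $R$. On $\{|x|\le R\}$ we have $\phi_L\le e^{\eps R}$, so that part is at most $e^{2\eps R}B(v)$. On $\{|x|>R\}$ we bound it by
\[
\big(\sup_{|x|>R}|x|^{-b}|v|^\alpha\big)\|\phi_Lv\|_2^2 .
\]
Here lies the main obstacle: $v$ need not be bounded, so this sup cannot be used directly. We will replace it by a Hölder--Sobolev estimate on the exterior region, namely
\[
\int_{|x|>R}\phi_L^2|x|^{-b}|v|^{\alpha}|v|^2\le R^{-b}\|v\|_{L^{p}(|x|>R)}^{\alpha}\|\phi_Lv\|_{L^{q}}^2,
\]
with $\frac{\alpha}{p}+\frac2q=1$ and $2\le q\le\frac{2N}{N-2}$, $p\in[2,\frac{2N}{N-2}]$. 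Such exponents exist because $0<\alpha<\frac4{N-2}$ in the subcritical range. Then
\[
\|\phi_Lv\|_q^2\lesssim\|\nabla(\phi_Lv)\|_2^2+\|\phi_Lv\|_2^2
\]
by Sobolev and Gagliardo--Nirenberg, with a constant depending only on $N$. We choose $R\ge1$, depending on $v$, so large that $R^{-b}\|v\|_{L^p(|x|>R)}^\alpha$ times this Sobolev constant is at most $\kappa/4$; this is possible because $v\in L^p$, so the tail tends to zero. This exterior contribution is then absorbed into the left side.

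We then get
\[
\tfrac12\kappa\big(\|\nabla(\phi_Lv)\|_2^2+\|\phi_Lv\|_2^2\big)\le e^{2\eps R}B(v),
\]
uniformly in $L$. Since $\phi_L\nabla v=\nabla(\phi_Lv)-v\nabla\phi_L$ and $|\nabla\phi_L|\le\eps\phi_L\le\phi_L$, we have
\[
\int\phi_L^2\big(|\nabla v|^2+|v|^2\big)\dd\le3\big(\|\nabla(\phi_Lv)\|_2^2+\|\phi_Lv\|_2^2\big).
\]
Letting $L\to\infty$ and using monotone convergence gives \eqref{eq:decay} with $C_\eps$ an absolute constant. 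Finally, $|x|^2\le C_\eps e^{2\eps|x|}$, so $|x|v\in L^2$ and $v\in\Sigma$.

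The step to check most carefully is the admissibility of $\phi_L^2v$ as a test function, which needs $H^1$ rather than smooth test functions, together with the term $a|x|^{-2}\phi_L^2|v|^2$. This term is finite because $\phi_L$ is bounded and \eqref{eq:hardy} holds, and it appears identically in $A(\phi_Lv)$, so no regularity at the origin is needed.
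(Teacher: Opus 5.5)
Your proposal is correct and follows the paper's proof essentially step by step. You use the same truncated weight $e^{\eps\min\{|x|,L\}}$ as a test function in the weak formulation, the same identity for $\Ree\langle v,\phi^2v\rangle_a$, the same splitting at a radius $R$ chosen independently of $\eps$ so that the H\"older--Sobolev tail is absorbed, and the same recovery of the gradient bound; the only cosmetic difference is that you pass to the limit by monotone convergence on $\int\phi_L^2(|\nabla v|^2+|v|^2)$ rather than by Fatou's lemma on $\theta_nv$, which is, if anything, slightly cleaner.
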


\begin{proof}
Fix $\eps\in(0,\frac12\sqrt\kappa]$, and for $n\in\N$ let
$\theta_n(x):=e^{\eps\min\{|x|,n\}}$. This is a bounded Lipschitz function with
$|\nabla\theta_n|\le\eps\theta_n$ almost everywhere, so $\theta_n^2v\in H^1$ is an
admissible test function in Definition \ref{def:weak}. We use the pointwise identity
$\Ree\big(\nabla v\cdot\nabla(\theta_n^{2}\bar v)\big)=|\nabla(\theta_nv)|^{2}-|\nabla\theta_n|^{2}|v|^{2}$,
valid for real $\theta_n$. With $w_n:=\theta_nv$, the real part of the weak
formulation becomes
\begin{equation}\label{eq:katoineq}
A(w_n)+\omega\|w_n\|_{2}^{2}
=\int|\nabla\theta_n|^{2}|v|^{2}\dd+\int\frac{|v|^{\alpha}|w_n|^{2}}{|x|^{b}}\dd .
\end{equation}
By \eqref{eq:equivnorm} the left-hand side is at least $\kappa\|w_n\|_{H^1}^{2}$.
The first term on the right is at most $\eps^{2}\|w_n\|_{2}^{2}\le\frac\kappa4\|w_n\|_{H^1}^2$.
For the second one, we split the integral at a radius $R\ge1$. On $\{|x|<R\}$, we
use $\theta_n\le e^{\eps R}$ to bound it by $e^{2\eps R}B(v)$. On $\{|x|\ge R\}$, we
use
$|x|^{-b}\le1$, H\"older's inequality with exponents $\frac{\alpha+2}{\alpha}$ and
$\frac{\alpha+2}{2}$, and the Sobolev embedding $H^1\subset L^{\alpha+2}$, which
holds since $2<\alpha+2<\frac{2N}{N-2}$. We get
\[
\int_{|x|\ge R}\frac{|v|^{\alpha}|w_n|^{2}}{|x|^{b}}\dd
\le\|v\|_{L^{\alpha+2}(|x|\ge R)}^{\alpha}\|w_n\|_{\alpha+2}^{2}
\le C_S^2\,\|v\|_{L^{\alpha+2}(|x|\ge R)}^{\alpha}\|w_n\|_{H^1}^{2}.
\]
Since $v\in L^{\alpha+2}$, we can choose $R\ge1$, depending only on $v$, $\kappa$
and $C_S$, so that $C_S^2\|v\|_{L^{\alpha+2}(|x|\ge R)}^{\alpha}\le\frac\kappa4$.
Collecting the estimates, we obtain
$\kappa\|w_n\|_{H^1}^{2}\le\frac\kappa2\|w_n\|_{H^1}^{2}+e^{2\eps R}B(v)$, that
is, $\|\theta_nv\|_{H^1}^{2}\le\frac2\kappa e^{2\eps R}B(v)$, uniformly in $n$.
As $n\to\infty$, $\theta_n v\to e^{\eps|x|}v$ and
$\nabla(\theta_n v)\to\nabla(e^{\eps|x|}v)$ almost everywhere. Fatou's lemma
gives $e^{\eps|x|}v\in H^1$ with
$\|e^{\eps|x|}v\|_{H^1}^{2}\le\frac2\kappa e^{2\eps R}B(v)$. Since
$e^{\eps|x|}\nabla v=\nabla(e^{\eps|x|}v)-\eps\frac{x}{|x|}e^{\eps|x|}v$, we
obtain \eqref{eq:decay}. Finally, $|x|^{2}\le C_\eps e^{2\eps|x|}$, so
$|x|v\in L^{2}$.
\end{proof}

\begin{proof}[Proof of Theorem \ref{thm:instability}]
Let $u_c$ be a minimizer of $m_a(c)$. Then $u_c\in V(c)$, $\Ea(u_c)=m_a(c)$, and
$u_c$ is a ground state at the frequency $\omega_c>0$ by Theorem
\ref{thm:level}(iii). By Proposition \ref{prop:decay}, $u_c\in\Sigma$. For $t>1$,
put $v_0^{(t)}:=u_c^{t}$, that is, $v_0^{(t)}(x)=t^{N/2}u_c(tx)$.

We check the required properties. First, $\|v_0^{(t)}\|_{2}=c$ by
\eqref{eq:scalingAB}. Second, $v_0^{(t)}\in\Sigma$, since
$\||x|v_0^{(t)}\|_{2}=t^{-1}\||x|u_c\|_{2}$. Third, the fibering map of $u_c$
attains its strict global maximum at $t_{u_c}=1$, because $\Qa(u_c)=0$. Lemma
\ref{lem:fibering} gives
\[
\Ea(v_0^{(t)})<\Ea(u_c)=m_a(c)
\qquad\text{and}\qquad
\Qa(v_0^{(t)})<0
\qquad\text{for every }t>1 .
\]
Fourth, $v_0^{(t)}\to u_c$ in $\Ha$ as $t\downarrow1$. Indeed,
$\|v_0^{(t)}\|_{2}=\|u_c\|_{2}$ and $\|v_0^{(t)}\|_{\dHa}^{2}=t^{2}\|u_c\|_{\dHa}^{2}$,
so $\|v_0^{(t)}\|_{\Ha}\to\|u_c\|_{\Ha}$. Moreover, $v_0^{(t)}\to u_c$ in $L^{2}$
by the continuity of dilations in $L^2$. Since $(v_0^{(t)})$ is bounded in $\Ha$,
every weak limit point equals $u_c$, and $v_0^{(t)}\rightharpoonup u_c$ weakly in
$\Ha$. In a Hilbert space, weak convergence together with convergence of the
norms implies strong convergence.

Given $\eps>0$, we choose $t>1$ so close to $1$ that
$\|v_0^{(t)}-u_c\|_{\Ha}<\eps$. Theorem \ref{thm:dichotomy}(ii) applies to
$v_0^{(t)}$, and the corresponding solution blows up in finite time. Hence
$e^{i\omega_ct}u_c$ is strongly unstable, with destabilizing data on
$S(c)\cap\Sigma$.
\end{proof}

\begin{remark}\rm \label{rmk:instabmeaning}
The instability reflects the minimax nature of the construction. Along its
dilation fibre the normalized ground state sits at the top of a mountain pass, and
the two slopes of the pass are the two sides of the dichotomy in Theorem
\ref{thm:dichotomy}. Spreading the ground state out ($t<1$) produces data below
the level with $\Qa>0$. These data are global, and under \eqref{eq:dynrange} they
scatter by Theorem \ref{thm:dichotomy}(i). Concentrating it ($t>1$) produces data
below the level with $\Qa<0$, which blow up when they have finite variance. The standing
wave $e^{i\omega_ct}u_c$ lies on the boundary between the two regimes, and
$m_a(c)$ is the height of that boundary on the sphere $S(c)$. For the classical
equation, this mechanism goes back to Berestycki and Cazenave
\cite{BerestyckiCazenave1981}; see also \cite{LeCoz2008,Ohta2018,Dinh2021CVEE}
for related results with potentials.
\end{remark}

%%%%%%%%%%%%%%%%%%%%%%%%%%%%%%%%%%%%%%%%%%%%%%%%%%%%%%%%%%%%%%%%%%%%%%%%%%%%%%
\section{Complements and open problems}\label{sec:remarks}
%%%%%%%%%%%%%%%%%%%%%%%%%%%%%%%%%%%%%%%%%%%%%%%%%%%%%%%%%%%%%%%%%%%%%%%%%%%%%%

\subsection{The mass-critical borderline}\label{ssec:masscritical}

Formula \eqref{eq:levelformula} degenerates as $\alpha$ decreases to
$\frac{4-2b}{N}$, where $\beta\downarrow2$ and $\gamma\uparrow\infty$. The reason is
visible on the fibering map. For $\beta=2$, both terms of $h_u$ carry the same
power of $t$. Hence $h_u$ is a multiple of $t^2$, and there is no mountain pass. What
remains is a threshold in the mass alone.

\begin{proposition}[Mass-critical threshold]\label{prop:masscritical}
Let $N\ge3$, $a>-\frac{(N-2)^{2}}{4}$, $0<b<2$ and $\alpha=\frac{4-2b}{N}$, so that
$\beta=2$ and $\sigma_c=\alpha$, and set
\begin{equation}\label{eq:cstar}
c^{*}:=\Big(\frac{\alpha+2}{2\,\GN}\Big)^{1/\alpha}.
\end{equation}
Then $\Ea(u)\ge\frac12\big(1-(c/c^{*})^{\alpha}\big)\|u\|_{\dHa}^{2}$ for every
$u\in S(c)$. Consequently $\inf_{S(c)}\Ea=0$ for $c\le c^{*}$ and
$\inf_{S(c)}\Ea=-\infty$ for $c>c^{*}$. Moreover $V(c)=\emptyset$ for $c<c^{*}$,
while for $c\ge c^{*}$ the set $V(c)$ is nonempty and $\Ea\equiv0$ on it, so that
$m_a(c)=0$. If, in addition, Assumption \ref{ass:cauchy} holds for these
parameters, then every solution of \eqref{eq:main} with $\|u_0\|_{2}<c^{*}$ is
global and bounded in $\Ha(\R^{N})$, while for every $c>c^{*}$ there is
$u_0\in S(c)\cap\Sigma$ whose solution blows up in finite time.
\end{proposition}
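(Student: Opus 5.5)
The plan is to work entirely with the functionals $A$, $B$ and the scaling relations \eqref{eq:scalingAB}, now with $\beta=2$ and $\sigma_c=\alpha$.

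\textbf{Lower bound.} For $u\in S(c)$, the sharp inequality \eqref{eq:GN} reads $B(u)\le\GN A(u)c^{\alpha}$. Inserting this in \eqref{eq:EQAB} gives
\[
\Ea(u)\ge\frac12A(u)-\frac{\GN c^{\alpha}}{\alpha+2}A(u)=\frac12\big(1-(c/c^*)^{\alpha}\big)A(u),
\]
by the definition \eqref{eq:cstar}. Hence $\Ea\ge0$ on $S(c)$ for $c\le c^*$.

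\textbf{Infimum on $S(c)$.} The fibering map of Lemma~\ref{lem:fibering} becomes $h_u(t)=t^2\Ea(u)$, and likewise $\Qa(u^t)=t^2\Qa(u)$ with $\Qa=A-\frac{2}{\alpha+2}B=2\Ea$. Letting $t\downarrow0$ shows $\inf_{S(c)}\Ea\le0$, so the infimum equals $0$ for $c\le c^*$. For $c>c^*$, take an optimizer $W$ from Proposition~\ref{prop:GN}, rescaled to mass $c$ by Lemma~\ref{lem:GNrestrict}. Equality in \eqref{eq:GN} gives $\Ea(W)=\frac12\big(1-(c/c^*)^\alpha\big)A(W)<0$, and then $h_W(t)\to-\infty$ as $t\to\infty$.

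\textbf{The set $V(c)$.} Since $\Qa=2\Ea$, we have $V(c)=\{u\in S(c):\Ea(u)=0\}$, so $m_a(c)=0$ whenever $V(c)\neq\emptyset$. For $c<c^*$ the lower bound gives $\Ea(u)>0$ for every $u\neq0$, so $V(c)=\emptyset$. At $c=c^*$, the optimizer of mass $c^*$ has $\Ea=0$. For $c>c^*$ I would use an intermediate value argument along a path in $S(c)$ joining a function of positive energy to $W$. The positive-energy endpoint is obtained by taking any $\phi$ with $J(\phi)<\GN(c^*/c)^{\alpha}$, i.e.\ with $\Ea(\phi)>0$ at mass $c$. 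Such $\phi$ exists because $J$ is scale invariant and can be made small: for instance a function spread over a large annulus far from the origin, where $B$ is small relative to $A\cdot\Mass^{\alpha/2}$ thanks to the decay of $|x|^{-b}$. The path is $\gamma(s)=c\,\frac{(1-s)\phi+sW}{\|(1-s)\phi+sW\|_2}$, choosing $\phi$ with disjoint support from a cut-off of $W$ so the denominator never vanishes; alternatively one uses the fact that $S(c)$ is path connected in infinite dimensions. Since $\Ea$ is continuous on $\Ha$ by Lemma~\ref{lem:weighted}, it vanishes somewhere along the path. I expect this nonemptiness for $c>c^*$ to be the only step needing care: one must justify the existence of $\phi$ with small $J$, and the annulus or dilation-type construction should do it.

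\textbf{Dynamics.} If $\|u_0\|_2<c^*$, conservation of mass and energy together with the lower bound give
\[
A(u(t))\le\frac{2\Ea(u_0)}{1-(c/c^*)^\alpha}
\]
uniformly in $t$. The blow-up alternative in Assumption~\ref{ass:cauchy} then yields global existence and boundedness in $\Ha$.

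For $c>c^*$, take $u_0=W$ as above, normalized to mass $c$. It has $\Ea(u_0)<0$, and $u_0\in\Sigma$ by Proposition~\ref{prop:decay}. To apply that proposition, note that Lemma~\ref{lem:EL} shows that a suitable rescaling $\theta W^t$ solves \eqref{eq:stationary} with $\omega>0$, and finite variance is preserved under such rescalings. The virial identity of Lemma~\ref{lem:virial} gives $V''=8\Qa(u)=16\Ea(u_0)<0$, and integrating twice forces finite-time blow-up.
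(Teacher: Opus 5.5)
Your proof has the same structure as the paper's. You use the same lower bound from \eqref{eq:GN}, the identities $\Ea(u^t)=t^2\Ea(u)$ and $\Qa=2\Ea$, and the same uniform bound on $\|u(t)\|_{\dHa}$ for $c<c^*$. For $c>c^*$ you also argue by intermediate values on $S(c)$. The paper applies this to $J$ and you apply it to $\Ea$ along a path, which is equivalent, since on $S(c)$ the sign of $\Ea$ is that of $\frac{\alpha+2}{2c^{\alpha}}-J$.

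There is a gap in the step you flagged: your justification for the existence of $\phi$ with small $J$ does not work as written. Scale invariance is the obstruction here, not the reason. Since $J(\theta u^{t})=J(u)$, no dilation can lower $J$, and a ``large annulus'' $\psi(|x|/R)$ has the same quotient for every $R$. You need to move a \emph{fixed} profile away from the origin without rescaling it, as the paper does by translation:
\begin{itemize}
\item for fixed $\phi\ne0$, the mass and $\|\nabla\phi\|_2$ of $\phi(\cdot-y)$ do not depend on $y$;
\item $A(\phi(\cdot-y))\ge\lambda_a\|\nabla\phi\|_2^2$ by \eqref{eq:equivnorm};
\item $\phi(\cdot-y)\rightharpoonup0$ in $H^1$ as $|y|\to\infty$, so Lemma \ref{lem:weighted}(b) gives $B(\phi(\cdot-y))\to0$.
\end{itemize}
Hence $J(\phi(\cdot-y))\to0$. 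A shell of fixed width at radius $R\to\infty$ would also work. With this fix your path argument is complete. Its denominator never vanishes, simply because $\phi$ and $W$ are not proportional: $J$ is homogeneous of degree zero and $J(\phi)<J(W)$. So no support manipulation is needed.

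For the blow-up datum at $c>c^*$ you take an optimizer $W$ of mass $c$ itself. You get finite variance from Lemma \ref{lem:EL} and Proposition \ref{prop:decay}. The paper instead approximates a negative-energy function on $S(c)$ by an element of $C_c^\infty(\R^N\setminus\{0\})$ and renormalizes the mass. Your route is valid. Lemma \ref{lem:EL} is stated under \eqref{eq:range}, but its proof never uses $\beta>2$. Also, a constant multiple $\theta W$ already solves \eqref{eq:stationary} with $\omega=\frac{\alpha A(W)}{2\Mass(W)}>0$, so no dilation is needed. Your route produces a distinguished datum, at the cost of the elliptic decay estimate and the attainment of $\GN$. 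The paper's density argument is elementary and uses only that $\GN$ is a supremum.
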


\begin{proof}
For $\alpha=\frac{4-2b}{N}$ one has
$\sigma_c=\frac{4-2b-(N-2)\alpha}{2}=\frac{N\alpha-(N-2)\alpha}{2}=\alpha$ and
$\beta=\frac{4-2b+2b}{2}=2$, so \eqref{eq:GN} reads $B(u)\le\GN A(u)\|u\|_{2}^{\alpha}$,
and it is sharp and attained by Proposition \ref{prop:GN}. For $u\in S(c)$, we
have
\[
\Ea(u)=\frac{A(u)}{2}-\frac{B(u)}{\alpha+2}
\ \ge\ \frac{A(u)}{2}\Big(1-\frac{2\GN c^{\alpha}}{\alpha+2}\Big)
=\frac{A(u)}{2}\Big(1-\big(c/c^{*}\big)^{\alpha}\Big).
\]
If $c\le c^{*}$, the bracket is nonnegative, so $\Ea\ge0$ on $S(c)$. Since
$\Ea(u^{t})=t^2\Ea(u)\to0$ as $t\downarrow0$, the infimum is $0$. If $c>c^{*}$,
then $\GN>\frac{\alpha+2}{2c^{\alpha}}$. So there is $u\in S(c)$ with
$J(u)>\frac{\alpha+2}{2c^{\alpha}}$, that is, with $\Ea(u)<0$. Then
$\Ea(u^{t})=t^{2}\Ea(u)\to-\infty$ as $t\to\infty$.

Since $\beta=2$, we have $\Qa=2\Ea$. So $\Ea$ vanishes on $V(c)$, and $u\in S(c)$
lies in $V(c)$ exactly when $J(u)=\frac{\alpha+2}{2c^{\alpha}}$. As $J\le\GN$, this
is impossible for $c<c^{*}$. For $c=c^{*}$, any optimizer of mass $c^*$ belongs to
$V(c^*)$. For $c>c^{*}$, we note that $J$ is continuous on the connected set
$S(c)$ and takes the value $\GN$ there. It also takes arbitrarily small values,
since $J(u(\cdot-y))\to0$ as $|y|\to\infty$ for fixed $u$. Indeed, the weight and
the potential both vanish at infinity, so $B(u(\cdot-y))\to0$ while
$A(u(\cdot-y))\to\|\nabla u\|_2^2$. By the intermediate value theorem, $J$ takes
the value $\frac{\alpha+2}{2c^{\alpha}}$ on $S(c)$.

Assume now that Assumption \ref{ass:cauchy} holds. If $\|u_0\|_{2}=c<c^{*}$, then
conservation of mass and energy and the first inequality give
$\|u(t)\|_{\dHa}^{2}\le2\Ea(u_0)\big(1-(c/c^{*})^{\alpha}\big)^{-1}$. Hence the
solution is global by the blow-up alternative. If $c>c^{*}$, take $u\in S(c)$ with
$\Ea(u)<0$ as above. The space $C_c^{\infty}(\R^{N}\setminus\{0\})$ is dense in
$H^1$ for $N\ge3$, and $\Ea$ is continuous on $H^1$. So there is $v$ in that
space, close to $u$ in $H^1$, such that $v_0:=(c/\|v\|_2)v$ still has negative
energy. Clearly, $v_0\in S(c)\cap\Sigma$. By Lemma \ref{lem:virial} and the
conservation of energy, we have $V''(t)=8\Qa(u(t))=16\Ea(v_0)<0$. The argument
used in the proof of Theorem \ref{thm:dichotomy}(ii) shows that the solution blows
up in finite time.
\end{proof}

Proposition \ref{prop:masscritical} is consistent with the results of Campos and
Guzm\'an \cite[Theorem~1.2]{CamposGuzman2021}. Let $Q$ be an optimizer of
\eqref{eq:GN} that solves \eqref{eq:ellipticQ}, as given by
\cite[Lemma~3.2]{CamposGuzman2021}. For $\beta=2$, the Pohozaev identities
\cite[Lemma~3.3]{CamposGuzman2021} give $A(Q)=\frac{2}{\alpha}\Mass(Q)$ and
$B(Q)=\frac{\alpha+2}{\alpha}\Mass(Q)$. Equality in \eqref{eq:GN} then yields
$\|Q\|_2^{\alpha}=\frac{\alpha+2}{2\GN}$, that is, $\|Q\|_2=c^*$. The threshold
$\|u_0\|_2<\|Q\|_2$ of \cite[Theorem~1.2(a)]{CamposGuzman2021} is therefore the
threshold $c<c^*$ above. For data of negative energy, finite-time blow-up holds
without any radial or finite-variance assumption, as shown by Boulaaras and the
second author \cite{BS2025}.

\subsection{The Hardy endpoint}\label{ssec:hardy}

By Proposition \ref{prop:monotone}, $\GN$ increases as $a$ decreases, and one
may wonder whether it blows up as $a\downarrow\aH$, where \eqref{eq:dHa} ceases to
control $\|\nabla u\|_2^2$. It does not. The reason is that the Hardy functional
\[
\AH(u):=\int|\nabla u|^{2}\dd-\frac{(N-2)^{2}}{4}\int\frac{|u|^{2}}{|x|^{2}}\dd,
\]
although not comparable to $\|\nabla u\|_2^2$, still controls a fractional Sobolev
norm once the mass is fixed. This is the case $s=1$, $d=N\ge3$, of a theorem of
Frank \cite[Theorem~1.2]{Frank2009}. Remainder terms in the Hardy inequality go
back to Brezis and V\'azquez \cite{BrezisVazquez1997}, who obtained an $L^{2}$
remainder on bounded domains. Inequality \eqref{eq:frank} below is a remainder on
the whole space, in a fractional Sobolev norm and at fixed mass. For every
$t\in(0,1)$, there is $\kappa_{N,t}>0$ such that
\begin{equation}\label{eq:frank}
\AH(u)^{t}\,\|u\|_{2}^{2(1-t)}\ \ge\ \kappa_{N,t}\,\big\|(-\Delta)^{t/2}u\big\|_{2}^{2},
\qquad u\in H^1(\R^N).
\end{equation}
In \cite{Frank2009}, this is stated for $u\in C_c^\infty(\R^N)$. It extends to
$H^1(\R^N)$ by density, since both sides are continuous on $H^1$ by
\eqref{eq:hardy}.

\begin{proposition}[Uniform threshold up to the Hardy endpoint]\label{prop:hardyendpoint}
Let $N\ge3$, $0<b<2$ and $0<\alpha<\frac{4-2b}{N-2}$. Then $\AH(u)>0$ for every
$u\in H^1(\R^N)\setminus\{0\}$, and
\begin{equation}\label{eq:GNsharp}
\GNs:=\sup\Big\{\frac{B(u)}{\AH(u)^{\beta/2}\|u\|_{2}^{\sigma_c}}:u\in H^1(\R^N)\setminus\{0\}\Big\}<\infty .
\end{equation}
Moreover $\GN<\GNs$ for every $a>\aH$, and $\GN\uparrow\GNs$ as $a\downarrow\aH$.
Consequently, if \eqref{eq:range} holds, then for every $c>0$
\[
m_a(c)\ \downarrow\ K_{\alpha,N,b}\,\GNs^{-\frac{4}{N\alpha+2b-4}}\,c^{-\gamma}>0
\qquad\text{as }a\downarrow\aH .
\]
\end{proposition}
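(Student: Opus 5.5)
The argument has four steps: positivity of $\AH$, finiteness of $\GNs$, the comparison $\GN<\GNs$ with convergence as $a\downarrow\aH$, and the transfer to $m_a(c)$ through \eqref{eq:levelformula}.

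\textbf{Positivity of $\AH$.} This follows from \eqref{eq:frank}. If $u\ne0$ and $\AH(u)=0$, then the left side of \eqref{eq:frank} vanishes for any fixed $t\in(0,1)$. Hence $\|(-\Delta)^{t/2}u\|_2=0$, which forces $u=0$ by Plancherel.

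\textbf{Finiteness of $\GNs$.} The idea is to use \eqref{eq:frank} in place of $\|\nabla u\|_2$ in the weighted bound of Lemma~\ref{lem:weighted}. Pick $t\in(0,1)$ close to $1$ so that the nonlinearity is still controlled in $H^t$. Since $\alpha(N-2)<4-2b$ is strict, we can find $t<1$ with $\alpha(N-2t)<2(2t-b)$, i.e.\ $\alpha<\frac{4t-2b}{N-2t}$ (this needs $2t>b$). The argument of Lemma~\ref{lem:weighted}(a) then goes through with the fractional Sobolev embedding $H^t\subset L^{p}$, $p<\frac{2N}{N-2t}$. It gives $B(w)\le C(\|(-\Delta)^{t/2}w\|_2^2+\|w\|_2^2)^{\frac{\alpha+2}{2}}$. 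Applying this to $w=u^s$ and optimizing in $s$, as in the proof of Proposition~\ref{prop:GN}, yields
\[
B(u)\le C\|(-\Delta)^{t/2}u\|_2^{\beta/t}\|u\|_2^{\alpha+2-\beta/t}.
\]
Here I use $B(u^s)=s^\beta B(u)$ and $\|(-\Delta)^{t/2}u^s\|_2^2=s^{2t}\|(-\Delta)^{t/2}u\|_2^2$. Inserting \eqref{eq:frank}, $\|(-\Delta)^{t/2}u\|_2^{\beta/t}\lesssim \AH(u)^{\beta/2}\|u\|_2^{\beta(1-t)/t}$, the mass exponents add up to $\alpha+2-\beta=\sigma_c$. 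Hence $\GNs<\infty$. Checking the exponent bookkeeping and the admissibility of $t$ is the main technical point. It is routine once $t$ is chosen with $\frac{4t-2b}{N-2t}>\alpha$.

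\textbf{Comparison and convergence.} For $a>\aH$ and $u\ne0$ we have $A(u)=\AH(u)+(a-\aH)\int|u|^2|x|^{-2}>\AH(u)$. So $J(u)$ is strictly less than the $\AH$-quotient of $u$. Evaluating at an optimizer $W$ of $\GN$ (Proposition~\ref{prop:GN}) gives $\GN=J(W)<\GNs$. Monotonicity of $a\mapsto\GN$ comes from Proposition~\ref{prop:monotone}, so the limit $L:=\lim_{a\downarrow\aH}\GN\le\GNs$ exists. For the reverse inequality, fix $u\ne0$. The $a$-quotient $J_a(u)\to B(u)/(\AH(u)^{\beta/2}\|u\|_2^{\sigma_c})$ as $a\downarrow\aH$, because $A_a(u)\to\AH(u)>0$. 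Hence $L\ge$ the $\AH$-quotient of $u$, and taking the supremum over $u$ gives $L\ge\GNs$.

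\textbf{Transfer to $m_a(c)$.} By \eqref{eq:levelformula}, $m_a(c)=K_{\alpha,N,b}\GN^{-\frac{4}{N\alpha+2b-4}}c^{-\gamma}$ with constants independent of $a$. Monotone convergence of $\GN$ to $\GNs$ then gives monotone convergence of $m_a(c)$ to the stated positive limit. Positivity of that limit uses $\GNs<\infty$.
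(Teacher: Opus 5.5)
Your proposal is correct and follows the paper's proof step by step. You derive positivity of $\AH$ from \eqref{eq:frank}, and you bound $\GNs$ through the fractional weighted estimate with $t<1$ close to $1$; your condition $\alpha(N-2t)<2(2t-b)$ is exactly the paper's choice $t>\frac{\beta}{\alpha+2}$. You then use the scaling optimization and Frank's inequality, the pointwise convergence of $J_a(u)$ together with the monotonicity in $a$, and finally the transfer through \eqref{eq:levelformula}. The only minor variation is that you obtain $\GN<\GNs$ directly, by evaluating at an optimizer $W$ of $\GN$ and using $A(W)>\AH(W)>0$, whereas the paper passes through $\mathcal{C}_{a'}$ for some $a'\in(\aH,a)$ via Proposition \ref{prop:monotone}; both routes rest on the attainment of $\GN$.
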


\begin{proof}
If $\AH(u)=0$, then \eqref{eq:frank} gives $(-\Delta)^{t/2}u=0$, hence $u=0$. This
proves the first claim.

For the finiteness of $\GNs$, we first establish a weighted inequality involving a
fractional norm. Fix $t\in\big(\frac{\beta}{\alpha+2},1\big)$, which is possible
because $\beta<\alpha+2$. The condition $t>\frac{\beta}{\alpha+2}$ is equivalent to
$(\alpha+2)(N-2t)<2(N-b)$, that is, to $(\alpha+2)\frac{N}{N-b}<\frac{2N}{N-2t}$. As
in the proof of Lemma \ref{lem:weighted}, we can therefore choose $r\in(1,\frac Nb)$
with $2<(\alpha+2)r'\le\frac{2N}{N-2t}$. Moreover $\alpha+2\le\frac{2N}{N-2t}$,
because $t>\frac{\beta}{\alpha+2}>\frac{N\alpha}{2(\alpha+2)}$. Splitting $B(u)$ at
$|x|=1$ and using H\"older's inequality and the embedding
$H^{t}(\R^N)\subset L^{q}(\R^N)$, $2\le q\le\frac{2N}{N-2t}$, we get
$B(u)\le C\big(\|(-\Delta)^{t/2}u\|_{2}^{2}+\|u\|_{2}^{2}\big)^{\frac{\alpha+2}{2}}$.
We apply this to $u^{s}$, for which $B(u^s)=s^{\beta}B(u)$ and
$\|(-\Delta)^{t/2}u^{s}\|_2^{2}=s^{2t}\|(-\Delta)^{t/2}u\|_2^{2}$, and we choose
$s^{2t}=\|u\|_2^{2}/\|(-\Delta)^{t/2}u\|_2^{2}$. We get
\[
B(u)\le C'\,\big\|(-\Delta)^{t/2}u\big\|_{2}^{\frac{\beta}{t}}\,\|u\|_{2}^{\alpha+2-\frac{\beta}{t}} .
\]
Inserting \eqref{eq:frank}, we obtain
\[
B(u)\le C'\kappa_{N,t}^{-\frac{\beta}{2t}}\,\AH(u)^{\frac\beta2}\,
\|u\|_{2}^{\frac{(1-t)\beta}{t}+\alpha+2-\frac{\beta}{t}}
=C'\kappa_{N,t}^{-\frac{\beta}{2t}}\,\AH(u)^{\frac\beta2}\,\|u\|_{2}^{\sigma_c},
\]
since $\frac{(1-t)\beta}{t}-\frac\beta t=-\beta$ and $\alpha+2-\beta=\sigma_c$.
Thus $\GNs<\infty$.

For $a>\aH$ and $u\ne0$, we have
$A(u)=\AH(u)+(a-\aH)\int|u|^2|x|^{-2}\ge\AH(u)>0$, hence $J(u)$ is at most the
quotient in \eqref{eq:GNsharp}, and $\GN\le\GNs$. The inequality is strict, because
$\GN<\mathcal{C}_{a'}\le\GNs$ for any $a'\in(\aH,a)$, by Proposition
\ref{prop:monotone}. On the other hand, for fixed $u\ne0$, $A(u)\downarrow\AH(u)>0$
as $a\downarrow\aH$, so $J(u)$ converges to the quotient in \eqref{eq:GNsharp}.
Hence $\liminf_{a\downarrow\aH}\GN$ is at least that quotient for every $u$, that
is, at least $\GNs$. Together with the monotonicity this shows $\GN\uparrow\GNs$.
The statement on $m_a(c)$ follows from \eqref{eq:levelformula}.
\end{proof}

Proposition \ref{prop:hardyendpoint} corrects a natural but misleading intuition.
The threshold $m_a(c)$ does not shrink to zero as the potential becomes critically
attractive. It stays bounded below by a positive constant, and the limiting level
$K_{\alpha,N,b}\GNs^{-\frac{4}{N\alpha+2b-4}}c^{-\gamma}$ is a natural candidate for
the threshold of the equation with $a=\aH$, whose Cauchy theory was developed by
Suzuki \cite{Suzuki2016} (see also \cite{Suzuki2012,Suzuki2012-bis}). For
$a=\aH$, the energy space is the completion of $C_c^\infty(\R^N\setminus\{0\})$ for
the norm $\big(\AH(u)+\|u\|_2^2\big)^{1/2}$, which is strictly larger than
$H^1(\R^N)$.

\subsection{Open problems}\label{ssec:open}

We end with questions that the present analysis does not settle. None of the
results above depends on them. We group them into variational and dynamical
questions.

\smallskip
\noindent\emph{Variational questions.}
\begin{enumerate}
\item[(i)] \emph{Uniqueness and nondegeneracy of ground states.} For $a=b=0$, the
positive solution of \eqref{eq:ellipticQ} is unique up to translation, by the
symmetry result of Gidas, Ni and Nirenberg \cite{GNN1981} and the uniqueness
theorem of Kwong \cite{Kwong1989}. For $a=0$ and $b>0$, the uniqueness of the
positive radial solution of \eqref{eq:ellipticQ} follows from Yanagida's criterion
\cite{Yanagida1991} for $N\ge3$ (see \cite{GenoudStuart2008}), and it was proved by
Genoud \cite{Genoud2011} for $N=2$. For $b=0$ and $\aH\le a<0$, Mukherjee, Nam and
Nguyen \cite{MNN2021} proved, for $N\ge3$ and every energy-subcritical power, that
the positive radial solution is unique. They also proved that every optimizer of
the corresponding Hardy--Gagliardo--Nirenberg inequality is a multiple of a dilate
of it. For $b=0$ and $a>0$, there is no optimizer (Remark \ref{rmk:attained}).
When $a\ne0$ and $b\ne0$, we are not aware of any uniqueness result. By Theorem
\ref{thm:level}, the ground states at a given frequency are exactly the
Gagliardo--Nirenberg optimizers of the corresponding mass lying on $V(c)$.
Uniqueness of one is therefore equivalent to uniqueness of the other. The
question is the following. \emph{For $a\ne0$ and $0<b<2$, is the ground state of
\eqref{eq:ellipticQ} unique up to phase? If $Q$ is a positive ground state, is it
nondegenerate, in the sense that the operator
$\Lc+1-(\alpha+1)|x|^{-b}Q^{\alpha}$ has a trivial kernel in $\Ha(\R^N)$?}

\item[(ii)] \emph{Symmetry of the optimizers.} For $\aH<a\le0$, Schwarz
symmetrization does not increase $\|\nabla u\|_2$ and does not decrease
$\int|u|^2|x|^{-2}$ or $B(u)$ \cite{LiebLoss2001}. Hence $J$ does not decrease,
and optimizers may be chosen radial and nonincreasing. For $a>0$, the potential
term moves in the wrong direction under rearrangement. \emph{For $a>0$, are all
optimizers of \eqref{eq:GN} radial, up to phase? Or can symmetry breaking occur
for large $a$?}

\item[(iii)] \emph{The Hardy endpoint.} \emph{Is the supremum $\GNs$ in
\eqref{eq:GNsharp} attained, in $H^1(\R^N)$ or in the larger energy space of the
operator $-\Delta-\frac{(N-2)^2}{4}|x|^{-2}$?} A positive answer would give
normalized solutions for this critical operator. \emph{Is the limiting level in
Proposition \ref{prop:hardyendpoint} the sharp threshold for the equation with
$a=\aH$?}
\end{enumerate}

\smallskip
\noindent\emph{Dynamical questions.}
\begin{enumerate}
\item[(iv)] \emph{Scattering in the whole intercritical range.} The variational
results hold for all $N\ge3$ and all $\alpha$ in \eqref{eq:range}. So do the
global existence statement and the blow-up part of Theorem \ref{thm:dichotomy}.
The scattering part is stated here only under \eqref{eq:dynrange}, where we rely
on \cite{HLW2024}. For $N=3$ and $a>0$, Theorem \ref{thm:equiv} and
\cite[Theorem~2.2]{BGS2023} give scattering below $m_a(c)$ when $\Qa(u_0)>0$, for
all $0<b<2$ and all $\alpha$ in \eqref{eq:range} (see the comparison in
Subsection \ref{ssec:novelties}). The question is the following. \emph{Let $N\ge3$,
$a>\aH$, $0<b<2$ and let $\alpha$ satisfy \eqref{eq:range}. Does every solution
with $\Ea(u_0)<m_a(c)$ and $\Qa(u_0)>0$ scatter in $\Ha(\R^N)$?} The cases left
open by \cite{HLW2024,BGS2023} include $N\ge4$, and $N=3$ with $a<0$ and either
$\alpha\ne2$ or $b\ge1$. The main obstacles are the lack of unweighted dispersive
estimates for $a<0$, the restriction to radial data of the dispersive estimates
of \cite{Zheng2018} for $N\ge4$, and the restrictions of the Strichartz-space
theory in Table~\ref{tab_LWP}. We expect a positive answer, at least for $a\ge0$
and for radial data (compare \cite{Zheng2018,LuMiaoMurphy2018}), but we do not
prove it.

\item[(v)] \emph{Finite-time blow-up without finite variance.} Our blow-up
statement assumes $u_0\in\Sigma$. For radial data of positive energy, finite-time
blow-up under the conditions of Theorem \ref{thm:dichotomy}(ii) follows from
\cite[Theorem~1.3(b)]{CamposGuzman2021} and Theorem \ref{thm:equiv}. For general
data, An, Jang and Kim \cite{AJK2023} obtained criteria for global existence and
blow-up of solutions which are not necessarily radial or of finite variance. In
the mass-critical case, finite-time blow-up for negative energy holds without
radial or finite-variance assumptions \cite{BS2025}. The question is the
following. \emph{Let $u_0\in\Ha(\R^N)$ be neither radial nor of finite variance,
with $\Ea(u_0)<m_a(c)$ and $\Qa(u_0)<0$. Does the solution blow up in finite
time?} Localized virial arguments go back to Ogawa and Tsutsumi
\cite{OgawaTsutsumi1991}. The decay of the weight $|x|^{-b}$ makes them available
without symmetry assumptions, as in \cite{BS2025}. For the classical equation
$a=b=0$, this question is open in general.

\item[(vi)] \emph{Dynamics at the level.} We do not study data with
$\Ea(u_0)=m_a(c)$. For the classical cubic equation, the solutions on the
threshold were classified by Duyckaerts and Roudenko \cite{DuyckaertsRoudenko2010},
and in the intercritical range by Campos, Farah and Roudenko \cite{CFR2022}.
Theorem \ref{thm:instability} and Remark \ref{rmk:instabmeaning} show that, along
its dilation fibre, the standing wave is unstable towards blow-up on one side and
towards global existence on the other. This is consistent with the expected
picture of a one-dimensional unstable direction. \emph{Classify the solutions
with $\Ea(u_0)=m_a(c)$.} Such a classification would require the nondegeneracy
asked for in (i).

\item[(vii)] \emph{Dependence of the dynamics on the coupling.} By Proposition
\ref{prop:monotone}, the level $m_a(c)$ increases with $a$. \emph{Is the set of
data satisfying \eqref{eq:below} and $\Qa(u_0)>0$ monotone in $a$?} As explained
in Remark \ref{rmk:monotoneset}, this does not follow from the monotonicity of the
level, because the energy also increases with $a$.
\end{enumerate}

\subsection*{Declarations}

\noindent\emph{Conflict of interest.} The authors declare that they have no
conflict of interest.

\smallskip
\noindent\emph{Data availability.} Data sharing is not applicable to this article,
as no datasets were generated or analysed.

\end{document}